	\tikzset{every path/.style={thick}}
\pgfplotsset{compat=newest}
\theoremstyle{plain}
\newtheorem{thm}{Theorem}[section]
\newtheorem{prop}[thm]{Proposition}
\newtheorem{lem}[thm]{Lemma}
\newtheorem{cor}[thm]{Corollary}
\newtheorem{conj}[thm]{Conjecture}
\newtheorem{obs}[thm]{Observation}
\theoremstyle{definition}
\newtheorem{defi}[thm]{Definition}
\newtheorem{ex}[thm]{Example}
\newcommand*\rel@kern[1]{\kern#1\dimexpr\macc@kerna}
\newcommand*\widebar[1]{%
	\begingroup
	\def\mathaccent##1##2{%
		\rel@kern{0.8}%
		\overline{\rel@kern{-0.8}\macc@nucleus\rel@kern{0.2}}%
		\rel@kern{-0.2}%
	}%
	\macc@depth\@ne
	\let\math@bgroup\@empty \let\math@egroup\macc@set@skewchar
	\mathsurround\z@ \frozen@everymath{\mathgroup\macc@group\relax}%
	\macc@set@skewchar\relax
	\let\mathaccentV\macc@nested@a
	\macc@nested@a\relax111{#1}%
	\endgroup
}
\newcommand{\R}{\mathbb{R}}
\newcommand{\N}{\mathbb{N}}
\newcommand{\Z}{\mathbb{Z}}
\newcommand{\dotcup}{\hspace{.22em}\ensuremath{\mathaccent\cdot\cup}\hspace{.22em}}
\newcommand{\conv}{\mathrm{conv}}
\newcommand{\Mcut}{\textsc{MultC}}
\newcommand{\PMcut}{\Mcut^\square}
\newcommand{\MinMcut}{\textsc{MultiCut}\xspace}
\newcommand{\supp}{\mathrm{supp}}
\newcommand{\mypar}[1]{\paragraph{#1.}}
\newcommand{\qq}[1]{``#1''}%quotes
\newcommand{\V}{{\widebar{V}}}
\newcommand{\E}{{\widebar{E}}}
\newcommand{\G}{{\widebar{G}}}
\renewcommand{\a}{{\widebar{a}}}
\newcommand{\wS}{{\widebar{S}}}
\newcommand{\e}{{e^*}}
\newcommand{\bnull}{{\bf 0}}
\newcommand{\T}{\mathsf{T}}
\newcommand{\nice}{shared\xspace}
\providecommand{\keywords}[1]{\textbf{\textit{keywords---}} #1}
\title{On the Dominant of the Multicut Polytope}
\author{Markus Chimani, Martina Juhnke-Kubitzke, Alexander Nover}
\date{\smaller School of Mathematics/Computer Science, Uni Osnabrück, Germany\\
\{markus.chimani,juhnke-kubitzke,alexander.nover\}@uni-osnabrueck.de}
\begin{document}
\maketitle
\begin{abstract}
	Given a graph $G=(V,E)$ and a set $S \subseteq \binom{V}{2}$ of terminal pairs, the minimum multicut problem asks for a minimum edge set $\delta \subseteq E$ such that there is no $s$-$t$-path in $G -\delta$ for any $\{s,t\}\in S$.
	For $|S|=1$ this is the well known $s$-$t$-cut problem, but in general the minimum multicut problem is NP-complete, even if the input graph is a tree.
	The multicut polytope $\PMcut(G,S)$ is the convex hull of all multicuts in $G$; the multicut dominant is given by
	$\Mcut(G,S)=\PMcut(G,S)+\R^E$. The latter is the relevant object for the minimization problem. While polyhedra associated to several cut problems have been studied intensively there is only little knowledge for multicut.
	
	We investigate properties of the multicut dominant and in particular derive results on liftings of facet-defining inequalities. This yields a classification of all facet-defining path- and edge inequalities. Moreover, we investigate the effect of graph operations such as node splitting, edge subdivisions, and edge contractions on the multicut-dominant and its facet-defining inequalities. In addition, we introduce facet-defining inequalities supported on stars, trees, and cycles and show that the former two can be separated in polynomial time when the input graph is a tree.
\end{abstract}

\keywords{{\smaller multicut, multiway cut, multi terminal cut, polyhedral study, facets}}
\section{Introduction}
	Cut problems on graphs are well known in combinatorial optimization.
	Probably best known are the minimum cut problem, which is polynomial time solvable \cite{ford_fulkerson_1956}, and the maximum cut problem, which is one of Karp's original 21 NP-complete problems \cite{karp}.
	
	A prominent generalization of the minimum $s$-$t$-cut problem is the \emph{minimum multicut problem} $\MinMcut$:
	Given a graph $G$ and a set $S\subseteq \binom{V(G)}{2}$ of terminal pairs, a \emph{multicut} is an edge set $\delta \subseteq E(G)$ such that for each pair $\{s,t\}\in S$ there is no $s$-$t$-path in $G -\delta$.
	Given non-negative edge weights $c_e$, $\MinMcut$ asks for a multicut $\delta$ minimizing $\sum_{e \in \delta} c_e$.
	
	If $|S|$ is fixed, \MinMcut is solvable in polynomial time for $|S|=1,2$ \cite{2-Terminals} but NP-complete for $|S| \geq 3$ \cite{Np-Hardness_3Terminals}.
	It remains NP-complete even when the input graph is restricted to trees of height~1, i.e., stars~\cite{NP-hard_trees}.
	Approximation algorithms for \MinMcut have been intensively studied, see e.g. \cite{approx_k-multicut, NP-hard_trees}.
	
	In this work we consider \MinMcut from a polyhedral point of view. 
%	To each multicut $\delta \subseteq E(G)$ we associate its incidence vector $x^\delta \in \R^{E(G)}$ given by
%	$$x^\delta_e=\begin{cases}
%		1, &\text{if } e \in \delta,\\
%		0, &\text{else.}
%	\end{cases}$$
	The \emph{multicut polytope} $\PMcut(G,S)$ is the convex hull of all incindence vectors of multicuts; the \emph{multicut dominant} is given by $\Mcut(G,S)=\PMcut(G,S)+\R^{E(G)}_{\geq 0}$. Since minimizing a non-negative objective function on $\PMcut(G,S)$ and $\Mcut(G,S)$ yields the same result, the latter is the relevant polyhedron for the considered optimization problem.
	
	A large body of the research on the cut polytope -- the polytope associated to the maximum cut problem-- consists of investigating facet-defining inequalities for certain graph classes \cite{DL1,DL2,GeometryOfCutsAndMetrics,POLJAK1992379,OnTheCutPolytope}.
	This is driven by the fact that these give rise to valid inequalities for the cut polytope of every graph containingsuch a graph as a subgraph.
	Moreover, there is an extensive study of the effect of graph operations such as node splitting and edge subdivisions on the cut polytope and its facet-defining inequalities \cite{OnTheCutPolytope,GeometryOfCutsAndMetrics}.
	
	Contrary to the cut polytope, there is only little knowledge on the multicut polytope and its dominant.
	
	For $|S|=1$, the multicut dominant was studied in \cite{s-t-cut_dominant}. Besides a characterization of vertices and adjacencies in the polyhedron, it was shown that in this case $\Mcut(G,\{\{s,t\}\})$ is completely described by \emph{edge-} and \emph{path inequalities} (see \Cref{prop:facets_s-t-cut} for details).
	Moreover, it was shown that each of these inequalities defines a facet.
	
	Clearly this generalizes to a relaxation of $\Mcut(G,S)$ for $|S|\geq 2$ by having path inequalities for each pair $\{s,t\}\in S$.
	In \cite{approx_k-multicut} it was shown that when the input graph is a tree and for each $\{s,t\}\in S$ one of both nodes is a descendant of the other this relaxation coincides with $\Mcut(G,S)$. Nevertheless, this does not hold in general.
	Already for $G=K_{1,3}$ with $S=\{\{v,w\}:v,w \text{ are leaves in } G\}$ the polyhedron defined by all edge- and path inequalities admits a fractional vertex by setting all edge variables to $0.5$.
	
	\mypar{Our Contribution}
		After recalling definitions and formally introducing the multicut dominant as the main object of our studies in \Cref{sec: preliminaries}, we start \Cref{sec:basic_properties} by investigating basic properties of $\Mcut(G,S)$.
		Moreover, we present results on liftings and projections of these polyhedra.
		The lifting results for the multicut dominant are stronger than those known for cut polytopes in the sense that lifting of inequalities does not only preserve validity of the inequalities but also preserves being facet-defining.
		This results in a characterization of facet-defining edge- and path inequalities.
		Then, we investigate the effect of graph operations such as node splittings and edge subdivisions on the multicut dominant and its facets in \Cref{sec:constructing_facets}.
		In \Cref{sec:star-ineq} we investigate facets supported on stars.
		In \Cref{sec:tree-ineq} we generalize these facet-defining inequalities  to facets on trees.
		Both classes can be separated in polynomial time when the input graph is a tree.
		Finally, in \Cref{sec:cycle-ineq} we introduce facet-defining inequalities supported on cycles.
		
		\mypar{Related Cut-Gerneralizations}
		There are multiple way to generalize cuts to different problems under the same (or similar) name in literature.
		
		In \cite{small_multicut_polytopes,k-cut-polytope} multiple polytopes associated to cut problems are studied.
		There, the \emph{k-cuts} are called multicuts as well;
		we give give their to distinguish those from our notion of multicuts:
		Given a graph $G=(V,E)$ and a partition $V= S_1 \dotcup \dots \dotcup S_k$ a \emph{$k$-cut} in $G$ is the set of all edges between a node in $S_i$ and a node in $S_j$ for some $1 \leq i < j \leq k$.

		Using this notion of multicuts, in \cite{lifted_multicut,lifted_multicut_polytope} the \emph{lifted multicut problem} was studied:
		Given a graph $G'$, a subgraph $G \subseteq G'$, and a multicut $\delta \subseteq E(G)$, the lifted multicut problem asks for a minimum multicut in $\widebar \delta \subseteq E(G')$ with $\widebar \delta \cap E(G)=\delta$.
		The polytope associated to this problem is called \emph{lifted multicut polytope}.
		In \cite{lifted_multicut_polytope}, the lifted multicut polytope for $G$ being a tree or a path was studied.

\section{Preliminaries}\label{sec: preliminaries}
%	In this section we provide some basic background on graphs and polyhedra. Then, we introduce the multicut dominant.  For notation and results related to graphs we refer to \cite{diestel}, for those related to polyhedra to \cite{BrunsGubeladze,ziegler}.
%	
%	\mypar{Graphs.}
	We only consider undirected graphs. A graph is \emph{simple}, if it has neither parallel edges nor self-loops.
	Unless specified otherwise, we only consider simple graphs in the following. 
	Given a graph $G=(V,E)$ we may also write $V(G)$ and $E(G)$ for its set of nodes $V$ and its set of edges $E$, respectively.
	For $v,w \in V(G)$, we let $vw=\{v,w\}$ be the edge between $v$ and $w$.
%	
%	A \emph{path} of length $k$ is a sequence of edges $e_1, \dots, e_k$ with $e_i =v_{i-1} v_i$ such that $v_i \neq v_j$ for $0 \leq i < j \leq k$. 
%	For nodes $s$ and $t$ an $s$-$t$-path is a path with $v_0=s$ and $v_k=t$. 
%	Such a sequence but with $v_0 = v_k$ is a \emph{cycle} of length $k$.
%	We use $C_n$ to denote the cycle of length~$n$ and $K_{m,n}$ to denote the complete bipartite graph on $m$ and $n$ nodes per partition set.
%	
%	A graph $H$ is a \emph{subgraph} of $G$, denoted by $H\subseteq G$, if (after possibly renaming) $V(H)\subseteq V(G)$ and $E(H) \subseteq E(G)$.
%	Given a subset $W \subseteq V$, the subgraph \emph{induced} by $W$ is the graph $G[W]=(W, \{uv \in E: u,v \in W \})$.
%	For nodes $u,v \in V(G)$ the \emph{distance} $d(u,v)$ between $u$ and $v$ is the length of a shortest path in $G$ from $u$ to $v$. for a node $v \in V(G)$ and a subgraph $H \subseteq G$, the \emph{distance} between $H$ and $v$ is given by $d(H,u)=\min_{u \in H}d(u,v)$.
%	
%	We denote the graph obtained from $G$ by deleting vertices $v_1,\dots, v_k$ (resp. edges $e_1,\dots, e_k$) by $G-\{v_1,\dots,v_k\}$ (resp. $G-\{e_1,\dots,e_k\}$). If we remove a single node $v$ (resp. edge $e$), we might just write $G-v$ (resp. $G-e$).
%	The graph $G/e$ is obtained from $G$ by \emph{contracting} the edge $e=vw$, i.e., the nodes $v$ and $w$ are identified, the arising self-loop is deleted and parallel edges are merged.
%	$G$ is \emph{$k$-connected} if $|V(G)| \geq k+1$ and for each pair of nodes $v,w \in V(G)$ there exist $k$ internally node-disjoint paths from $v$ to $w$.
	
%	\mypar{Polyhedra.}
	A \emph{polytope} (see, e.g., \cite{BrunsGubeladze,ziegler} for details) is the convex hull of finitely many points in $\R^d$.
	An (unbounded) \emph{polyhedron} is the Minkowski sum of a polytope and a cone generated by finitely many points. 
	A polyhedron is a polytope if and only if it is bounded.
	
	In the following let $\mathcal{P}$ be a polyhedron. 
	The \emph{dimension} $\dim \mathcal{P}$ is the dimension of its affine hull. 
	A linear inequality $a^\mathsf{T}x \geq b$ where $a \in \R^d$ and $b \in \R$ is a \emph{valid inequality} for $\mathcal{P}$ if it is satisfied by all points $x \in \mathcal{P}$.  
	It is \emph{tight} if there is some $p \in \mathcal{P}$ with $a^\mathsf{T}p=b$. We use the shorthand $\{a^\mathsf{T}x\geq b\}$ for $\{x \in \R^E : a^\mathsf{T}x \geq b\}$ and its analogon for equalities.
%	A (proper) \emph{face} of $P$ is a (non-empty) set of the form $P \cap \{a^\mathsf{T}x = b\}$ for some valid inequality $a^\mathsf{T} x \geq b$ with $a\neq \bnull$.
%	Faces of dimension~$0$ and $\dim(P)-1$ are \emph{vertices} and \emph{facets}, respectively. 
%	For each face $F \subseteq P$ there is a facet $F' \subseteq P$ \emph{dominating} it, i.e., $F \subseteq F'$.
%	
%	A tight inequality $a^\mathsf{T}x \geq b$ is \emph{facet-defining} if $P\cap\{a^\mathsf{T}x = b\}$ is a facet of $P$.
%	Each polyhedron can be represented as the intersection of finitely many closed half-spaces, i.e., $P$ admits a \emph{linear description} $P=\{x \in \R^d: Ax \geq {b}\}$ for some matrix $A\in \R^{m \times d}$ and some vector ${b}\in \R^m$.
%	This is given, e.g., by taking the system of all facet-defining inequalities.
	
\mypar{The Multicut Dominant}
	For $k \in \mathbb{N}$, let $[k]=\{1, \dots , k\}$.
	Given a graph $G$ and a set $S=\{\{s_1,t_1\},\dots, \{s_k,t_k\} \}\subseteq \binom{V(G)}{2}$ of \emph{terminal pairs} a \emph{multicut} is a set $\delta \subseteq E(G)$ such that for each $i \in [k]$ the nodes $s_i$ and $t_i$ are in different components of $G-\delta$.
	When the terminal set is in doubt, we may call $\delta$ an \emph{S-multicut}.
	A node $v \in V(G)$ is called a \emph{terminal} if there exists some $w \in V(G)$ such that $\{v,w\}\in S$. A multicut is \emph{minimal} if it is minimal with respect to inclusion, it is a \emph{minimum multicut} if it has minimal total weight (with respect to given edge weights).

	To each edge set $F \subseteq E$ we associate its incidence vector $x^F \in \R^E$ given by
	$$x^F_e= \begin{cases}	1, & \text{ if } e \in F,\\
		0, & \text{ else.}			\end{cases}$$
	We define the \emph{multicut polytope} of $G$ as
	$$\PMcut(G,S)= \conv \left(\left\{x^\delta:\ \delta \text{ is a multicut in $G$ with respect to $S$} \right\} \right) $$
	and the \emph{multicut dominant} of $G$ as
	$$\Mcut(G,S)=\PMcut(G,S)+ \R_{\geq 0}^{E(G)}.$$
	
	Given a valid inequality $a^\mathsf{T}x \geq b$ of $\Mcut(G,S)$ its \emph{support graph} $\supp(a)\subseteq G$ is the subgraph of $G$ induced by the edge set $\{e \in E(G): a_e \neq 0\}$.
	If $\supp(a)$ is a single edge, the inequality is called an \emph{edge inequality}.

\section{Basic Properties}\label{sec:basic_properties}
	We start by investigating basic properties of the multicut dominant and its facet-defining inequalities.
	Afterwards, we study the effect of edge additions, deletions, and contractions on the multicut dominant.
	In particular, this leads to a classification of all facet-defining path- and edge inequalities.
	
	The following observation is a direct consequence of the construction of the multicut dominant:
	\begin{obs} Let $G=(V,E)$ be a graph and $S \subseteq \binom{V}{2}$ be a set of terminal pairs.
		\begin{itemize}
			\item The vertices of $\Mcut(G,S)$ are precisely the incidence vectors of the (inclusion wise) minimal multicuts in $G$.
			\item We have $\dim \Mcut(G,S)=|E|$.
			\item Let $a^\mathsf{T} x \geq b$ be facet-defining for $\Mcut(G,S)$. Since $\Mcut(G,S)$ is the Minkowski sum of a polytope and $\R^E_{\geq 0}$, each inner normal of a facet of $\Mcut(G,S)$ is contained in $\R_{\geq 0}^E$, i.e. we have $a_e \geq 0$ for each $e \in E$.
			\item Let $W \supseteq V$, $\widebar G=(W,E)$, and $\widebar S \subseteq \binom{W}{2}$  be a set of terminal pairs such that $S=\widebar S \cap \binom{V}{2}$. Then, we have $\Mcut(G,S)=\Mcut(\widebar G,S)$.
		\end{itemize}
	\end{obs}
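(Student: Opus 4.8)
The plan is to derive each of the four statements from the Minkowski-sum description $\Mcut(G,S)=\PMcut(G,S)+\R^E_{\geq 0}$ together with the observation that $E$ is itself a multicut, since in $G-E$ every node is isolated and hence every terminal pair is separated. Thus $x^E\in\PMcut(G,S)$ and $x^E+\R^E_{\geq 0}\subseteq\Mcut(G,S)$, which already yields the dimension claim: the dominant contains a full-dimensional translate of $\R^E_{\geq 0}$, so $\dim\Mcut(G,S)=|E|$. It also shows that the recession cone of $\Mcut(G,S)$ is exactly $\R^E_{\geq 0}$. For the third bullet I would use that a valid inequality $a^\T x\geq b$ is bounded below on the polyhedron and is therefore nonnegative on its recession cone; evaluating at the unit vectors $x^{\{e\}}\in\R^E_{\geq 0}$ gives $a_e\geq 0$ for every $e\in E$.

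The first bullet is the only part needing genuine care, and I would prove both inclusions. For \emph{vertex} $\Rightarrow$ \emph{inclusion-wise minimal multicut}: if $v$ is a vertex and $v=y+z$ with $y\in\PMcut(G,S)$ and $z\in\R^E_{\geq 0}\setminus\{\bnull\}$, then $v=\tfrac12\big(y+(v+z)\big)$ is a proper convex combination of two distinct points of $\Mcut(G,S)$, a contradiction; hence $z=\bnull$ and $v=y\in\PMcut(G,S)$, so $v$ is a vertex of the $0/1$-polytope $\PMcut(G,S)$ and thus the incidence vector $x^\delta$ of some multicut $\delta$. If $\delta$ were not inclusion-wise minimal, a multicut $\delta'\subsetneq\delta$ would give $x^{\delta'}\leq x^\delta$ with $x^\delta-x^{\delta'}\neq\bnull$, again contradicting minimality. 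For the converse, given an inclusion-wise minimal multicut $\delta$ I would verify directly that $x^\delta$ is a vertex: using $\Mcut(G,S)=\{x:x\geq x^{\delta'}\text{ for some multicut }\delta'\}$, any representation $x^\delta=\tfrac12(p+q)$ with $p,q\in\Mcut(G,S)$ forces $p_e=q_e=0$ on $e\notin\delta$ (as $\Mcut(G,S)\subseteq\R^E_{\geq 0}$), whence both $p$ and $q$ dominate the incidence vector of a multicut contained in $\delta$, which by minimality must be $\delta$ itself; this pins $p_e=q_e=1$ on $\delta$, so $p=q=x^\delta$.

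For the last bullet I would argue that both dominants are generated by the same incidence vectors in $\R^E$. Since $E\subseteq\binom{V}{2}$, every node of $W\setminus V$ is isolated in $\widebar G$ and thus forms its own component in $\widebar G-\delta$ for any $\delta\subseteq E$, while the component structure on $V$ is identical in $G-\delta$ and $\widebar G-\delta$. Consequently $\delta\subseteq E$ separates a pair $\{s,t\}\in S$ in $G$ iff it does so in $\widebar G$, and every pair of $\widebar S$ having an endpoint outside $V$ is separated automatically; hence the multicuts coincide and $\PMcut(G,S)=\PMcut(\widebar G,S)$, so the dominants agree. The only real obstacle is the extreme-point bookkeeping in the first bullet; the remaining three parts are immediate from the construction.
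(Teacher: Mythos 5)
Your proposal is far more detailed than the paper, which states this as an observation that is a \qq{direct consequence of the construction} and gives no proof at all; three of your four arguments are correct and complete. However, there is one genuine flaw: the identity you invoke in the converse direction of the first bullet, namely $\Mcut(G,S)=\{x: x\geq x^{\delta'}\text{ for some multicut }\delta'\}$, is false. The right-hand side is the union of the shifted orthants $x^{\delta'}+\R^E_{\geq 0}$ and is in general not convex, whereas $\Mcut(G,S)$ is its convex hull. Concretely, for the path $s$--$v$--$t$ with edges $e=sv$, $f=vt$ and $S=\{\{s,t\}\}$, the point $(\tfrac12,\tfrac12)=\tfrac12 x^{\{e\}}+\tfrac12 x^{\{f\}}$ lies in $\PMcut(G,S)\subseteq\Mcut(G,S)$ but dominates no incidence vector of a multicut. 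Since your claim that \qq{both $p$ and $q$ dominate the incidence vector of a multicut contained in $\delta$} rests on this false description, the proof that minimal multicuts give vertices has a hole as written.

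The hole is local and easily repaired using the Minkowski-sum description you rely on elsewhere: write $p=\sum_i\lambda_i x^{\delta_i}+z$ with multicuts $\delta_i$, coefficients $\lambda_i>0$ summing to $1$, and $z\in\R^E_{\geq 0}$. Your (correct) observation that $p_e=q_e=0$ for $e\notin\delta$ forces $x^{\delta_i}_e=0$ for all $i$ on those coordinates, i.e.\ every $\delta_i$ is a multicut contained in $\delta$; minimality of $\delta$ gives $\delta_i=\delta$ for all $i$, hence $p=x^\delta+z\geq x^\delta$, and symmetrically $q\geq x^\delta$. Then $x^\delta=\tfrac12(p+q)$ forces $p=q=x^\delta$, exactly as you conclude. (A minor wording slip in the forward direction: when $\delta'\subsetneq\delta$, the decomposition $x^\delta=x^{\delta'}+(x^\delta-x^{\delta'})$ contradicts the extremality of $v$ via your first argument, not \qq{minimality}.) The remaining bullets --- the dimension count via $x^E+\R^E_{\geq 0}$, the recession-cone argument giving $a_e\geq 0$ for facet normals, and the isolated-vertex argument identifying the multicuts of $G$ and $\widebar G$ --- are all correct, and the last one even handles the intended stronger statement with $\widebar S$ in place of $S$.
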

	
		We can consider the support graph of facet-defining inequalities:
	\begin{lem}\label{lem:nonzero_coeff_support}
		 Let $G=(V,E)$ be a graph, $S \subseteq \binom{V}{2}$ be a set of terminal pairs, $a^\T x \geq b$ be facet-defining for $\Mcut(G,S)$, and $f \in E$. Assume that $a_f \neq 0$ and $\{a^\T x \geq b\} \neq \{x_f \geq 0\}$. Then, there exists some multicut $\delta$ with $f \in \delta$ and $a^\T x^\delta =b$.
	\end{lem}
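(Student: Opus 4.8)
The plan is to argue by contradiction, exploiting the facial structure of the dominant. Since $\Mcut(G,S)$ is full-dimensional with recession cone $\R^E_{\geq 0}$ and $a \geq 0$ by the Observation, the facet $F = \Mcut(G,S) \cap \{a^\T x = b\}$ is a face of a pointed polyhedron and hence decomposes as $F = \conv(\mathcal V) + \cone(\mathcal R)$, where $\mathcal V$ is the set of vertices of $\Mcut(G,S)$ lying on $F$ --- i.e.\ incidence vectors of minimal multicuts $\delta$ with $a^\T x^\delta = b$ --- and $\mathcal R$ is the set of extreme rays of $\Mcut(G,S)$ contained in the recession cone of $F$. First I would pin down $\mathcal R$ explicitly: the recession cone of $F$ is $\{r \in \R^E_{\geq 0} : a^\T r = 0\}$, and since $a \geq 0$ this is spanned exactly by the unit vectors $e_g$ with $a_g = 0$. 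In particular, because $a_f \neq 0$, the ray $e_f$ does not belong to $\mathcal R$.

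Suppose now, for contradiction, that no multicut $\delta$ with $f \in \delta$ satisfies $a^\T x^\delta = b$. Then every $\delta$ with $x^\delta \in \mathcal V$ is a minimal multicut avoiding $f$, so $(x^\delta)_f = 0$ throughout $\mathcal V$. Writing an arbitrary $x \in F$ as $x = \sum_{x^\delta \in \mathcal V}\lambda_{\delta} x^\delta + \sum_{g : a_g = 0}\mu_g e_g$ with $\lambda_\delta, \mu_g \geq 0$ and $\sum_\delta \lambda_\delta = 1$, and reading off the $f$-coordinate, the ray part contributes nothing (as $a_f \neq 0$ excludes $f$ from the second sum) and the vertex part vanishes by assumption; hence $x_f = 0$. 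Thus $F \subseteq \{x_f = 0\}$.

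To finish, I would invoke that $F$ is a facet, so $\dim F = |E| - 1$ and its affine hull equals the hyperplane $\{a^\T x = b\}$. The inclusion $F \subseteq \{x_f = 0\}$ then forces $\{a^\T x = b\} \subseteq \{x_f = 0\}$, and two affine hyperplanes in $\R^E$ with one contained in the other must coincide. Consequently $a$ is a positive multiple of the $f$-th unit vector and $b = 0$, i.e.\ $\{a^\T x \geq b\} = \{x_f \geq 0\}$, contradicting the hypothesis. Therefore some minimal multicut on $F$ contains $f$, which is the desired $\delta$.

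The coordinate bookkeeping is routine; the step deserving the most care is the face decomposition --- justifying precisely which vertices and extreme rays generate a facet of the \emph{unbounded} dominant --- together with the concluding dimension argument that upgrades $F \subseteq \{x_f = 0\}$ to equality of the two hyperplanes. Note that the argument in fact produces a \emph{minimal} multicut through $f$ on the facet, which is slightly stronger than what is claimed.
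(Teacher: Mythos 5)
Your proposal is correct, and it shares the paper's central insight: under the negation, every point of the facet $F=\Mcut(G,S)\cap\{a^\T x=b\}$ satisfies $x_f=0$, because the vertices of $F$ are incidence vectors of tight minimal multicuts (which avoid $f$ by assumption) while the recession cone of $F$ is generated by the unit vectors of edges $g$ with $a_g=0$, so $a_f\neq 0$ rules out the $f$-direction. The two arguments differ only in how they turn $F\subseteq\{x_f=0\}$ into a contradiction. The paper perturbs the inequality: $a^\T x+\lambda x_f\geq b$ is valid and defines the same face as $a^\T x\geq b$, which is impossible for a facet-defining inequality of a full-dimensional polyhedron unless $(a,b)$ is proportional to the data of $x_f\geq 0$ --- exactly the case excluded by hypothesis. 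You instead compare affine hulls: $\mathrm{aff}(F)=\{a^\T x=b\}$ is a hyperplane contained in the hyperplane $\{x_f=0\}$, hence equal to it, forcing $a$ to be a positive multiple of the $f$-th unit vector and $b=0$, again excluded. These finishes are equivalent in substance, and yours is slightly more economical in that it introduces no perturbation parameter. Your write-up is also more complete at precisely the two places where the paper is terse: the paper verifies $x_f=0$ only for the tight multicuts (not for the unbounded part of the facet, which needs the ray analysis you spell out), and its closing contradiction implicitly requires the hypothesis $\{a^\T x\geq b\}\neq\{x_f\geq 0\}$, which your case analysis invokes explicitly.
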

	\begin{proof}
		Assume there is no such $\delta$ and let $\lambda >0$. The inequality $a^\T x + \lambda x_f \geq b$ is valid for $\Mcut(G,S)$. By assumption, each multicut $\delta$ satisfying $a^\T x =b$ satisfies $a^\T x + \lambda x_f=b$. Hence, both inequalities define the same face of $\Mcut(G,S)$ contradicting the assumption that $a^\T x \geq b$ is facet-defining.
	\end{proof}
	
	\begin{thm}
		Let $G=(V,E)$ be a graph, $S \subseteq \binom{V}{2}$ be a set of terminal pairs, and $a^\mathsf{T} x \geq b$ be facet-defining for $\Mcut(G,S)$ such that  $\{a^\T x \geq b\} \neq \{x_{e'} \geq 0\}$ for all $e' \in E(G)$. Then, each edge $e \in E(\supp(a))$ lies on an $s$-$t$-path in $\supp(a)$ for some $\{s,t\} \in S$.
		In particular, each leaf of $\supp(a)$ is a terminal.
	\end{thm}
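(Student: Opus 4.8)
The plan is to avoid arguing inside $G$ and instead run the whole minimality argument inside the support graph $H:=\supp(a)$. First I would record two consequences of the hypotheses. By the preceding Observation the inner normal of a facet lies in $\R^E_{\geq 0}$, so $a\geq\bnull$ and in particular every edge of $H$ carries a \emph{strictly} positive coefficient $a_f>0$. Moreover, since $a^\T x\geq b$ is facet-defining, $b$ equals the minimum of $a^\T x^\delta$ over all multicuts $\delta$ of $G$; this minimum is attained at a vertex of $\Mcut(G,S)$, i.e.\ at a minimal multicut, precisely because $a\geq\bnull$. Fixing an edge $e\in E(H)$, the assumption that $\{a^\T x\geq b\}\neq\{x_{e'}\geq 0\}$ for every $e'$ lets me invoke \Cref{lem:nonzero_coeff_support} with $f=e$, yielding a multicut $\delta$ of $G$ with $e\in\delta$ and $a^\T x^\delta=b$.

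The heart of the argument is to transfer this tight multicut into $H$. I set $D:=\delta\cap E(H)$, so that $a(D)=a^\T x^\delta=b$, the deleted edges of $\delta$ carrying coefficient $0$. I would first check that $D$ is a multicut of $H$: any $s$-$t$-path in $H-D$ uses only support edges outside $\delta$ and hence is an $s$-$t$-path in $G-\delta$, which is impossible. The key step is then to show that $D$ is \emph{inclusion-minimal} as a multicut of $H$. Indeed, if some $D'\subsetneq D$ were already a multicut of $H$, then $D'\cup\bigl(E(G)\setminus E(H)\bigr)$ would be a multicut of $G$ whose only paid edges are those of $D'$, so its $a$-weight is $a(D')<a(D)=b$, the strict inequality holding because every omitted edge of $D\setminus D'$ lies in $E(H)$ and thus has positive coefficient. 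This contradicts the minimality of $b$.

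With $D$ an inclusion-minimal multicut of $H$ containing $e$, the conclusion falls out: $D-e$ is no longer a multicut of $H$, so some terminal pair $\{s,t\}\in S$ is reconnected, i.e.\ there is a simple $s$-$t$-path $P$ in $H-(D-e)$. This $P$ must traverse $e$, for otherwise it would avoid all of $D$ and contradict that $D$ separates $\{s,t\}$ in $H$. Hence $e$ lies on the $s$-$t$-path $P\subseteq\supp(a)$, which is the main claim. The leaf statement is then immediate: if $\ell$ is a leaf of $\supp(a)$ with unique incident edge $e$, I apply the claim to $e$; since $\ell$ has degree one it cannot be an interior vertex of the simple path $P$, so it is an endpoint of $P$ and therefore a terminal.

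The step I expect to be the crux is the reduction to $H$. The naive route---take a minimal multicut of $G$ through $e$ and reconnect a terminal pair by minimality---only produces a path in $G$, which may run through zero-coefficient edges and so need not lie in $\supp(a)$. Working inside $H$ circumvents this, and the reason it succeeds is exactly the strict positivity of $a$ on $E(H)$: it forces every minimum-$a$-weight multicut, once intersected with $H$, to be inclusion-minimal there, and that is what powers the reconnection argument while keeping the resulting path within the support.
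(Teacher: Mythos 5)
Your proof is correct, and it follows the same skeleton as the paper's: invoke \Cref{lem:nonzero_coeff_support} to obtain a tight multicut $\delta$ containing $e$, then use strict positivity of $a$ on the support to contradict validity. The genuine difference is your reduction to $H=\supp(a)$. The paper argues directly in $G$: assuming $e$ lies on no $s$-$t$-path, it asserts that $\delta\setminus\{e\}$ is again a multicut and concludes $a^\T x^{\delta\setminus\{e\}}<b$. That assertion is only justified if ``$s$-$t$-path'' is read as a path in $G$, since a reconnecting path in $G-(\delta\setminus\{e\})$ is merely forced to pass through $e$, not to stay inside $\supp(a)$; it may well traverse zero-coefficient edges. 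So the paper's argument, as written, establishes the weaker ``path in $G$'' version of the claim, and leaves precisely the gap you flag in your last paragraph. Your padding trick --- replacing $\delta$ by $(\delta\cap E(H))\cup\bigl(E(G)\setminus E(H)\bigr)$, which costs nothing because the added edges have coefficient $0$ --- forces every reconnecting path into the support, yields inclusion-minimality of $D=\delta\cap E(H)$ as a multicut of $H$, and hence delivers the statement exactly as claimed, including the leaf conclusion (an endpoint argument that also needs the path to be simple and inside $\supp(a)$, which your version provides). In short: your route is a corrected refinement of the paper's, and it is the version one should actually record.
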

	\begin{proof}
		Assume there is some $e \in \supp(a)$ such that $e$ does not lie on any $s$-$t$-path.
		By \Cref{lem:nonzero_coeff_support} there is some multicut $\delta$ with $e \in \delta$ and $a^\mathsf{T} x^\delta =b$. Since $e$ is not contained in any $s$-$t$-path for any $\{s,t\} \in S$, also $\delta \setminus \{e\}$ is a multicut. But, since $e \in \supp(a)$, we have $a_e >0$ and thus, $a^\mathsf{T}x^{\delta \setminus \{e\}} < a^\mathsf{T}x^\delta =b$ contradicting $a^\mathsf{T}x \geq b$ being valid for $\Mcut(G,S)$.
	\end{proof}
	
	Next, we investigate coefficients of facet-defining inequalities of the multicut dominant along \emph{induced paths}, i.e., paths in $G$ in which each internal node has degree~$2$ in $G$.
	\begin{thm}
		 Let $G=(V,E)$ be a graph, $S \subseteq \binom{V}{2}$ be a set of terminal pairs, and $a^\T x \geq b$ be facet-defining for $\Mcut(G,S)$ with $\{a^\T x \geq b\} \neq \{x_{e'} \geq 0\}$ for all $e' \in E$. Furthermore let $P \subseteq G$ be an induced path such that no internal node of $P$ is a terminal.
		Then, $a_e =a_f$ for all $e,f \in P$.
	\end{thm}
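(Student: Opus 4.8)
The plan is to reduce the statement to the case of two consecutive edges of $P$ sharing a single internal node, and then to exploit that this node is a non-terminal of degree~$2$. Write $P = v_0 v_1 \cdots v_k$ and let $e = v_{i-1}v_i$, $f = v_i v_{i+1}$ be consecutive edges, so that their common node $v = v_i$ is an internal node of $P$. By hypothesis $v$ has degree~$2$ in $G$ and is not a terminal, whence $e$ and $f$ are the only two edges of $G$ incident to $v$. Since equality of coefficients along $P$ follows by transitivity from equality across each such consecutive pair (the single-edge case being vacuous), it suffices to show $a_e = a_f$.

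The combinatorial heart of the argument is the following interchangeability, which I would establish first: for every $\{s,t\} \in S$, an $s$-$t$-path $p$ contains $e$ if and only if it contains $f$. Indeed, $v$ is not a terminal, so $v \notin \{s,t\}$; hence if $p$ visits $v$ it does so as an internal vertex and must use two distinct edges at $v$, which can only be $e$ and $f$. Thus $p$ uses $e$ iff $p$ passes through $v$ iff $p$ uses $f$. From this I would derive the swapping property: if $\delta$ is a multicut with $e \in \delta$ and $f \notin \delta$, then $\delta' := (\delta \setminus \{e\}) \cup \{f\}$ is again a multicut, since any $s$-$t$-path avoiding $\delta'$ avoids $f$, hence avoids $e$ by interchangeability, hence avoids all of $\delta$ — a contradiction; the symmetric statement holds with $e$ and $f$ exchanged. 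The same reasoning shows that if a multicut contains both $e$ and $f$, then deleting either one leaves a multicut.

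With these tools the coefficients are pinned down using validity. Recall from the earlier observation that $a_e, a_f \ge 0$ because the inequality is facet-defining; assume without loss of generality $a_e \ge a_f$. If $a_e = 0$ then $a_f = 0$ and we are done, so suppose $a_e > 0$. Since the inequality is not $\{x_e \ge 0\}$, \Cref{lem:nonzero_coeff_support} yields a multicut $\delta$ with $e \in \delta$ and $a^\T x^\delta = b$. If $f \in \delta$ as well, then $\delta \setminus \{e\}$ is a multicut with $a^\T x^{\delta \setminus \{e\}} = b - a_e < b$, contradicting validity; hence $f \notin \delta$. The swapped set $\delta' = (\delta \setminus \{e\}) \cup \{f\}$ is then a multicut, so validity gives $b \le a^\T x^{\delta'} = b - a_e + a_f$, i.e. $a_f \ge a_e$. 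Together with $a_e \ge a_f$ this forces $a_e = a_f$.

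The main obstacle is entirely in the combinatorial swapping step: one must verify carefully that exchanging $e$ for $f$ (or deleting a redundant copy) preserves the multicut property, and this is exactly where the degree-$2$ and non-terminal hypotheses on the internal nodes of $P$ enter. Once interchangeability is in hand, the polyhedral part — invoking \Cref{lem:nonzero_coeff_support} together with plain validity — is routine.
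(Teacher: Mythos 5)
Your proof is correct, but it runs on a different polyhedral engine than the paper's. Both arguments share the same combinatorial core: because the internal nodes of $P$ have degree~$2$ in $G$ and are non-terminals, every relevant $s$-$t$-path uses either all edges of $P$ or none, so edges of $P$ are freely interchangeable inside multicuts. From there the routes diverge. The paper handles all of $P$ at once: it lowers every coefficient on $E(P)$ to the minimum $M$, shows the resulting inequality $c^\T x \geq b$ is still valid (replacing $\delta \cap E(P)$ by the single minimum-coefficient edge), and then concludes via a decomposition argument --- $a^\T x \geq b$ would be the sum of the valid inequality $c^\T x \geq b$ and nonnegative multiples of edge inequalities $x_e \geq 0$, which is impossible for a facet-defining inequality distinct from these summands; \Cref{lem:nonzero_coeff_support} is never invoked. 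You instead reduce to consecutive pairs $e,f$ and argue locally at tight points: \Cref{lem:nonzero_coeff_support} (whose hypothesis $\{a^\T x \geq b\} \neq \{x_e \geq 0\}$ is exactly supplied by the theorem's assumption) produces a multicut $\delta$ tight for $a^\T x \geq b$ with $e \in \delta$, and the swap $\delta \mapsto (\delta \setminus \{e\}) \cup \{f\}$ together with plain validity forces $a_f \geq a_e$, hence equality. What your approach buys is that it stays entirely at the level of tight vertices and validity, never needing the (slightly delicate) fact that facet-defining inequalities admit no nontrivial conic decomposition into other valid inequalities --- a fact the paper uses implicitly and which itself needs the non-edge-inequality hypothesis to rule out a degenerate case with $b=0$; in your version that hypothesis enters transparently, exactly where \Cref{lem:nonzero_coeff_support} is applied. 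What the paper's approach buys is brevity: one global replacement handles the whole path without the pairwise reduction and case analysis on whether $f \in \delta$.
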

	\begin{proof}
		Assuming the contrary, let $M=\min_{e \in E(P)}a_e$, $e\in E(G)$ with $a_e=M$, and define $c \in \R^E$ by
		$$c_e=\begin{cases}
			a_e,	&\text{for } e \notin E(P),\\
			M,		&\text{for } e \in E(P).
		\end{cases}$$
		First, we show that $c^\mathsf{T} x \geq b$ is valid for $\Mcut(G,S)$. 
		Let $\delta$ be a multicut in $G$.
		Since $P$ is induced and contains no terminals as inner vertices, also $\delta'=\left(\delta \setminus P\right) \cup \{e\}$ is a multicut  in $G$. 
		Clearly, we have $c^\T x^\delta \geq c^\T x^{\delta'} =a^\mathsf{T}x^{\delta'} \geq b$. 
		
		Since $a^\T x \geq b$ is a sum of $c^\T x \geq b$ and edge inequalities $x_e \geq 0$, this contradicts the assumption that $a^\T x \geq b$ is facet-defining.
	\end{proof}
	We want to point out that both assumptions on the path in the previous theorem are necessary. \Cref{thm: complete_tree_ineq} will provide facet-defining inequalities having non-induced paths with different coefficients in the support graph.
	The facets in \Cref{thm: WagnerIneq} contain induced paths with internal terminals in their support graphs and have different coefficients attached to edges in such paths.
	
	Next, we give a complete characterization of the boundedness of facets via their support graph.
	\begin{thm}\label{thm: boundedness}
		 Let $G=(V,E)$ be a graph, $S \subseteq \binom{V}{2}$ be a set of terminal pairs, and $a^\T x \geq b$ be facet-defining for $\Mcut(G,S)$. Then the facet $\{a^\T x = b\}\cap \Mcut(G,S)$ is bounded if and only if $\supp(a)=G$.
	\end{thm}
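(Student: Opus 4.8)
The plan is to read off boundedness from the recession structure of the facet. Two ingredients are available: first, $\Mcut(G,S)=\PMcut(G,S)+\R_{\geq 0}^E$ is contained in $\R_{\geq 0}^E$ and has recession cone exactly $\R_{\geq 0}^E$; second, by the observation above, every facet normal satisfies $a\geq 0$ componentwise. Since $a^\T x\geq b$ is tight, these give $b=\min\{a^\T x: x\in\Mcut(G,S)\}\geq 0$. The conceptual reason behind the statement is that the recession cone of the facet equals $\{d\in\R_{\geq 0}^E: a^\T d=0\}$, which is trivial precisely when $a_e>0$ for all $e$; however, I would prove the two implications directly rather than invoking this abstractly.

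For the direction $\supp(a)=G\Rightarrow$ bounded, I would argue as follows: here $a_e>0$ for every edge (recall $a\geq 0$). Any $x\in\{a^\T x=b\}\cap\Mcut(G,S)$ satisfies $x\geq 0$ and $\sum_{e\in E}a_ex_e=b$, and since all summands are nonnegative we get $0\leq a_ex_e\leq b$, hence $0\leq x_e\leq b/a_e$ for each $e$. Thus the facet is contained in a box and is bounded. For the converse $\supp(a)\neq G\Rightarrow$ unbounded, note that since $a\geq 0$ the failure $\supp(a)\neq G$ forces $a_f=0$ for some edge $f$. Picking any point $x^0$ in the (nonempty) facet, the ray $x^0+\lambda x^{\{f\}}$, $\lambda\geq 0$, stays in $\Mcut(G,S)$ because we only add a nonnegative vector, and it stays on the hyperplane because $a^\T(x^0+\lambda x^{\{f\}})=b+\lambda a_f=b$. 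Hence the facet contains an unbounded ray.

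I do not expect a serious obstacle here; the proof is short once the sign information is in place. The one point that must be used carefully is exactly that $a\geq 0$: it is what lets the sum $\sum_e a_ex_e$ be bounded coordinatewise in the first direction, and what lets us conclude the existence of an edge with vanishing coefficient $a_f=0$ in the second. A minor technical caveat worth flagging is that $\supp(a)=G$ should be read as equality of edge sets, since the edge-induced support graph can never contain an isolated vertex of $G$; under this reading the stated equivalence is exactly $a_e>0$ for all $e\in E$.
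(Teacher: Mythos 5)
Your proof is correct, and it handles both directions with the same key facts the paper uses: the componentwise nonnegativity $a\geq 0$ of facet normals and the Minkowski-sum structure $\Mcut(G,S)=\PMcut(G,S)+\R_{\geq 0}^E$. The unbounded direction (some $a_f=0$ yields the ray $x^0+\lambda x^{\{f\}}$ inside the facet) is literally the paper's argument. Where you differ is the bounded direction: the paper argues via the recession cone --- any ray of $\Mcut(G,S)$ has direction $z\in\R_{\geq 0}^E\setminus\{\mathbf{0}\}$, a ray in the facet forces $a^\T z=0$, contradicting $a_e>0$ --- which implicitly invokes the standard fact that an unbounded polyhedron (face) contains a ray. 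You instead trap the facet in the box $\prod_{e\in E}[0,b/a_e]$ using $\Mcut(G,S)\subseteq\R_{\geq 0}^E$ and the tight equation; this is more elementary and self-contained, since it needs no ray-existence result, at the small price of explicitly using nonnegativity of points of $\Mcut(G,S)$ (which the paper's proof never needs). Your closing caveat that $\supp(a)=G$ must be read as equality of edge sets is also well taken and consistent with the paper's own usage, since its proof works with $E\setminus E(\supp(a))$ rather than with the vertex sets.
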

	\begin{proof}
		Since $\Mcut(G,S)=\PMcut(G,S)+ \R^E_{\geq 0}$, each ray in $\Mcut(G,S)$ is of the form $\{y+ \lambda z: \lambda \in \R_{\geq 0}\}$ with $y \in \Mcut(G,S)$ and $z \in \R^E_{\geq 0}\setminus\{\bnull\}$.
		
		If there is some edge $e \in E\setminus E(\supp(a))$, we have $a^\T x = a^\T (x+x^{\{e\}})$ yielding that  $\{a^\T x = b\}\cap \Mcut(G,S)$ is unbounded.
		
		Now assume that $\supp(a)=G$ and there is a ray $\{y+ \lambda z: \lambda \in \R_{\geq 0}\} \subseteq\{a^\T x = b\}\cap \Mcut(G,S)$. Then, we have
		$0=b-b=a^T(y+ \lambda z)-a^\T y=\lambda a^\T z$ for each $\lambda \in \R_{\geq 0}$. Thus, we have $a^T z=0$ and since $a_e > 0$ for all $e \in E$ this contradicts $z \in \R^E_{\geq 0}\setminus\{\bnull\}$.
	\end{proof}

	The graph $G/e$ is obtained from $G$ by \emph{contracting} the edge $e=vw$, i.e., the nodes $v$ and $w$ are identified, the arising self-loop is deleted and parallel edges are merged.
	Considering the contraction of an edge $e$, there is a one-to-one correspondence between multicuts in $G/e$ and multicuts $\delta$ in $G$ with $e \notin \delta$. This, directly yields the following observation:
	\begin{obs}\label{obs: contraction}
		Let $G=(V,E)$ be a graph, $S \subseteq \binom{V}{2}$ be a set of terminal pairs, and $e=vw \in E$ such that $\{v,w\}\notin S$. Then $\Mcut(G/e,S)=\Mcut(G,S)\cap \{x_e=0\}$.
	\end{obs}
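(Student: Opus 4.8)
The plan is to prove that the two polyhedra coincide by matching their generators rather than by a direct inclusion of arbitrary points. Both $\Mcut(G,S)$ and $\Mcut(G/e,S)$ are of the form (polytope) $+\,\R^{(\cdot)}_{\geq 0}$, so by construction each is the convex hull of the incidence vectors of its multicuts plus the corresponding nonnegative orthant. I would first record that $x_e \geq 0$ is valid for $\Mcut(G,S)$, since every generator $x^\delta$ and every ray lies in $\R^{E}_{\geq 0}$; hence $\{x_e = 0\}$ is a supporting hyperplane and $F := \Mcut(G,S) \cap \{x_e=0\}$ is a face. Throughout I identify $\R^{E(G/e)}$ with the coordinate subspace $\{x_e = 0\}\subseteq \R^{E(G)}$.

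The first key step is to describe $F$ through its generators. Writing a point of $\Mcut(G,S)$ as $\sum_i \lambda_i x^{\delta_i} + r$ with $\lambda_i > 0$, $\sum_i \lambda_i = 1$, multicuts $\delta_i$, and $r \in \R^{E}_{\geq 0}$, the coordinate identity $x_e = \sum_i \lambda_i [e \in \delta_i] + r_e$ is a sum of nonnegative terms; thus $x_e = 0$ forces $r_e = 0$ and $e \notin \delta_i$ for every $i$. Consequently $F = \conv\{x^\delta : \delta \text{ a multicut of } G,\ e \notin \delta\} + \R^{E \setminus \{e\}}_{\geq 0}$. The second step invokes the one-to-one correspondence recalled above between multicuts of $G$ avoiding $e$ and multicuts of $G/e$: under the identification of ambient spaces this carries the generating set $\{x^\delta : e \notin \delta\}$ bijectively onto $\{x^{\delta'} : \delta' \text{ a multicut of } G/e\}$ and carries $\R^{E\setminus\{e\}}_{\geq 0}$ onto $\R^{E(G/e)}_{\geq 0}$. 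Matching the two generating descriptions then yields $F = \Mcut(G/e,S)$.

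The step I expect to require the most care is the identification of $\R^{E(G/e)}$ with the hyperplane $\{x_e=0\}\subseteq \R^{E(G)}$, since contracting $e=vw$ may merge two edges $vu$ and $wu$ of $G$ into a single edge of $G/e$. When $v$ and $w$ have no common neighbor this does not happen, $E(G/e)=E\setminus\{e\}$, and the identification is simply the coordinate inclusion, so the generator matching of the previous paragraph is immediate. In the presence of a common neighbor one has to fix how the coordinates $x_{vu}$ and $x_{wu}$ correspond to the single coordinate of the merged edge; the guiding observation is that a multicut of $G$ avoiding $e$ gains nothing from cutting only one of $vu,wu$, since the $v$-$w$-$u$ path survives, so on the incidence vectors of multicuts the correspondence is forced. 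Pinning down this identification precisely is the one non-formal point of the argument; the remaining steps are routine bookkeeping with the generator description.
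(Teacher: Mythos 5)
Your proposal is essentially the paper's own argument: the paper's entire proof consists of the one sentence that multicuts in $G/e$ correspond bijectively to multicuts $\delta$ in $G$ with $e\notin\delta$, and your face/generator analysis (from $x_e=0$ force $r_e=0$ and $e\notin\delta_i$ for every generator, then match the two generating descriptions) is a rigorous rendering of the ``this directly yields'' step that the paper leaves implicit. Whenever $v$ and $w$ have no common neighbour, so that $E(G/e)=E\setminus\{e\}$ and the identification of ambient spaces is the obvious coordinate inclusion, your proof is complete and correct.

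The point you flag and postpone as ``routine bookkeeping'', however, is not bookkeeping: it cannot be carried out, because in the common-neighbour case the statement itself -- under the paper's definition of contraction, in which parallel edges are merged -- fails for dimension reasons. Since $\{v,w\}\notin S$, the set $E\setminus\{e\}$ is a multicut of $G$ avoiding $e$, so the face $\Mcut(G,S)\cap\{x_e=0\}$ contains $x^{E\setminus\{e\}}+\R^{E\setminus\{e\}}_{\geq 0}$ and hence has dimension exactly $|E|-1$, whereas $\Mcut(G/e,S)$ is full-dimensional in its own ambient space of dimension $|E(G/e)|$, and $|E(G/e)|<|E|-1$ precisely when $v$ and $w$ share a neighbour. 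Concretely, for the triangle on $\{v,w,u\}$ with $e=vw$ and $S=\{\{v,u\}\}$, the face is the two-dimensional set $\{(0,x_{vu},x_{wu}):x_{vu}\geq 1,\ x_{wu}\geq 1\}$ while $\Mcut(G/e,S)=[1,\infty)$ is one-dimensional; no identification of ambient spaces can equate them. Your guiding heuristic -- that a multicut avoiding $e$ ``gains nothing'' from cutting only one of $vu,wu$ -- pertains only to minimal multicuts; the generators of the dominant also include non-minimal multicuts and the coordinate rays, which keep $x_{vu}$ and $x_{wu}$ as independent directions inside the face, while the merged edge supplies only one direction on the other side. So the correct reading is that the observation, your proof, and the paper's proof all hold verbatim exactly when no parallel edges arise; otherwise one must either add that hypothesis or interpret $G/e$ as a multigraph that keeps parallel edges, under which your generator matching goes through unchanged. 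This defect is shared by (indeed inherited from) the paper's one-line proof, so your attempt is no weaker than the original, and you were right to single out this identification as the dangerous step -- only the hope that it can be ``pinned down'' is misplaced.
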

	
	Next, we consider the deletion and addition of edges.
	\begin{thm}\label{thm: projection}
		Let $G=(V,E)$ be a graph, $S \subseteq \binom{V}{2}$ be a set of terminal pairs, $H \subseteq G$, and $S'=S \cap \binom{V(H)}{2}$. 
		Then, the following hold:
		\begin{enumerate}[(i)]
			\item For $e \in E$, $\Mcut(G-e,S)=\pi(\Mcut(G,S))$ where $\pi\colon \R^E \to \R^{E \setminus \{e\}}$ is the orthogonal projection.
			\item If $\sum_{e \in E(H)}a_ex_e \geq b$ is valid for $\Mcut(H,S')$, it is also valid for $\Mcut(G,S)$.
			\item If $a^\mathsf{T}x \geq b$ is facet-defining for $\Mcut(G,S)$ and $\supp(a) \subseteq H$, the inequality $\sum_{e \in E(H)}a_ex_e \geq b$ is facet-defining for $\Mcut(H,S')$.
			\item If $a^\T x \geq b$ is facet-defining for $\Mcut(H,S')$, then $\sum_{e \in E(H)}a_e x \geq b$ is facet-defining for $\Mcut(G,S)$.
		\end{enumerate}
	\end{thm}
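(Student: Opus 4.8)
The plan is to prove the four items in order, each building on its predecessors. Write $E' = E \setminus E(H)$ for the external edges and let $\pi_H \colon \R^E \to \R^{E(H)}$ be the orthogonal projection forgetting the external coordinates. For (i) I would exploit the elementary correspondence of multicuts: since $(G-e)-\delta' = G-(\delta' \cup \{e\})$, the set $\delta \setminus \{e\}$ is a multicut in $G-e$ whenever $\delta$ is one in $G$, and $\delta' \cup \{e\}$ is a multicut in $G$ whenever $\delta'$ is one in $G-e$. As $\pi$ maps $\R^E_{\geq 0}$ onto $\R^{E\setminus\{e\}}_{\geq 0}$, the first direction yields $\pi(\Mcut(G,S)) \subseteq \Mcut(G-e,S)$ and the second the reverse inclusion, checked on the generating incidence vectors and the recession cones. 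For (ii) I would restrict a multicut: for any multicut $\delta$ in $G$ the set $\delta \cap E(H)$ is a multicut in $H$ with respect to $S'$, because $H - (\delta \cap E(H))$ is a subgraph of $G-\delta$ and hence carries no forbidden path; thus $\sum_{e\in E(H)} a_e x^\delta_e = a^\T x^{\delta \cap E(H)} \geq b$. Since a valid inequality for a dominant necessarily has $a \geq 0$, this survives adding non-negative recession directions and is therefore valid on all of $\Mcut(G,S)$.

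Items (iii) and (iv) both rest on two observations. First, the lift $\delta_H \mapsto \delta_H \cup E'$ turns a multicut of $H$ (w.r.t.\ $S'$) into a multicut of $G$ (w.r.t.\ $S$): deleting $E'$ isolates the vertices $V \setminus V(H)$, so the pairs in $S \setminus S'$ become trivially separated while those in $S'$ are separated exactly as in $H - \delta_H$. Second, iterating (i) over the edges of $E'$ gives $\pi_H(\Mcut(G,S)) = \Mcut((V,E(H)),S)$, and by the last bullet of the Observation, together with the fact that a terminal pair meeting an isolated vertex imposes no constraint, this equals $\Mcut(H,S')$.

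For (iv), validity is exactly (ii); for the dimension I would take $|E(H)|$ affinely independent points on the facet of $\Mcut(H,S')$, lift each to $\R^E$ by setting the external coordinates to $1$ (decomposing into incidence vectors and lifting each summand shows the lift lies in $\Mcut(G,S)$, and it still satisfies $a^\T x = b$ since $a$ vanishes on $E'$), and then, for every $f \in E'$, adjoin one such lift shifted by the unit vector $x^{\{f\}}$; the resulting $|E|$ points are affinely independent because their difference vectors split into the independent $H$-directions and the distinct external unit vectors. For (iii), validity again follows from the lift, since $a^\T x^{\delta_H \cup E'} = a^\T x^{\delta_H}$ as $\supp(a) \subseteq H$. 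For the dimension I would take $|E|$ affinely independent points $p^1, \dots, p^{|E|}$ on the facet of $\Mcut(G,S)$, whose affine hull is the hyperplane $\{a^\T x = b\}$, and project them: each $\pi_H(p^i)$ lies in $\{a^\T x = b\} \cap \Mcut(H,S')$ by the projection identity above, and since an affine map commutes with passing to the affine hull, $\mathrm{aff}\{\pi_H(p^i)\} = \pi_H(\{a^\T x = b\})$ is the full hyperplane of dimension $|E(H)|-1$ in $\R^{E(H)}$, forcing the face to be a facet.

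The step I expect to be the crux is the dimension count in (iii): a projection of $|E|$ affinely independent points need not remain affinely independent, so a pointwise argument fails. The resolution is to argue at the level of affine hulls — the projection of the facet's affine hull $\{a^\T x = b\}$ is again a hyperplane of the correct dimension in $\R^{E(H)}$ — which uses precisely that $a$ is supported inside $H$ and is nonzero there.
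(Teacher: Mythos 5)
Your proposal is correct. For (i) it coincides with the paper's proof (the correspondence $\delta \mapsto \delta\setminus\{e\}$ and $\delta' \mapsto \delta'\cup\{e\}$). The paper then merely asserts that (ii) and (iii) ``follow immediately'' from (i); your direct restriction argument for (ii) (via $\delta\cap E(H)$, plus $a\geq 0$ for the recession cone) and, more importantly, your affine-hull argument for (iii) supply exactly the missing details---and you correctly isolate the one real subtlety, namely that affine independence is not preserved pointwise under projection, so one must project the hyperplane $\{a^\T x = b\}$ itself, which is where $\supp(a)\subseteq H$ and $a\neq\bnull$ are used. The only place where your route genuinely differs is (iv). The paper first reduces to $V(H)=V$ via the isolated-vertex observation, then adds the missing edges one at a time by induction: for a single new edge $e^*$ it lifts the affinely independent tight multicuts into the hyperplane $\{x_{e^*}=1\}$ and one further point into $\{x_{e^*}=2\}$. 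You instead lift in one shot: extend each of the $|E(H)|$ affinely independent tight points of $\Mcut(H,S')$ by setting all coordinates on $E\setminus E(H)$ to $1$ (legitimate because $\delta_H\cup(E\setminus E(H))$ is an $S$-multicut, which also disposes of the vertices outside $V(H)$ and the pairs in $S\setminus S'$ without any separate reduction), and adjoin, for each external edge $f$, a copy shifted by the unit vector $\mathds{1}_f$. The underlying mechanism---lifted tight points plus unit-vector perturbations in the new coordinates---is the same as the paper's, but your version avoids both the induction and the $W=V$ reduction; what the paper's edge-by-edge formulation buys is a minimal single-edge lifting step that it reuses later (e.g.\ in \Cref{lem:0lifting_addEdge_nicefacet} for \nice facets), whereas your one-shot construction is self-contained and arguably cleaner for the statement at hand.
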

	\begin{proof}
		We start by proving {\it (i)}. Then {\it (ii)} and {\it (iii)} follow immediately.
		Observe that for each multicut $\delta$ in $G$ the set $\delta \setminus\{e\}$ is a multicut in $G-e$. On the other hand, if $\delta'$ is a multicut in $G-e$, then $\delta' \cup\{e\}$ is a multicut in $G$. This directly yields $\Mcut(G-e,S)=\pi(\Mcut(G,S))$.
		
		Now, we prove statement {\it(iv)}. If $W \neq V$, we consider the graph $\widebar{H}=(V,F)$. Since $S'$-multicuts in $H$ and $S$-multicuts in $\widebar H$ coincide, we have $\Mcut(H,S')=\Mcut(\widebar{H},S)$. Thus, we may assume $W=V$.
		We prove the statement for the case $H^*=(V,E \setminus \{e^*\})$ for some $e^* \in E$.
		Then, the claim follows by adding edges in $E \setminus F$ one by one to $H$.
		
		For each multicut $\delta$ in $H^*$, the set $\widebar{\delta}=\delta \cup\{e^*\}$ is a multicut in $G$.
		Thus, ${x^{\widebar{\delta}}, x^{\widebar{\delta}}+x^{\{e^*\}} \in \Mcut(G,S)}$.
		Since $a ^\mathsf{T} x \geq b$ is facet-defining for $\Mcut(G,S)$, lifting all multicuts satisfying $a^\T x = b$ in this fashion into the hyperplanes ${\{x_{e^*}=1\}}$ and $\{x_{e^*}=2\}$ yields that
		the inequality $\sum_{e\in E\setminus\{e^*\}}a_ex_e \geq b$ defines a face of dimension at least $|E|$ of $\Mcut(G,S)$. Since $a \neq \bnull$, this yields that the inequality is facet-defining.
	\end{proof}
	
	Note that the facet-defining inequalities of the multicut dominant can be split into two sets: those that are also facet-defining for the multicut polytope and those that are not.

	\begin{defi}[shared facets]
			Let $G=(V,E)$ be a graph, $S\subseteq \binom{V}{2}$ be a set of terminal pairs.
			Given a facet-defining inequality $a^\T x \geq b$ of $\Mcut(G,S)$, the defined facet is \emph{\nice} if $a^\T x \geq b$ is also facet-defining for $\PMcut(G,S)$.
	\end{defi}

	Clearly, each facet-defining inequality $a^\mathsf{T} x \geq b$ of $\Mcut(G,S)$ with $\supp(a)=G$ is \nice by \Cref{thm: boundedness}.
	Moreover, considering the proof of \Cref{thm: projection}, one can see that \nice facets remain \nice under removal of edges.
	Unfortunately, as we also see in that proof this does not hold for lifting in general.
	However, with some additional restrictions we can lift facet-defining inequalities while making sure that the property of being \nice is preserved.
	\begin{lem}\label{lem:0lifting_addEdge_nicefacet}
		Let $G=(V,E)$ be a graph, $S \subseteq \binom{V}{2}$ be a set of terminal pairs, $a^\T x \geq b $ define a \nice facet of $\Mcut(G,S)$, $v,w \in V$ such that $e^*=vw \notin E$, and $\G=(V,E\cup\{e^*\})$.
		Assume there is a multicut $\delta^*$ in $\G$ such that $e^* \notin \delta^*$ and $\sum_{e \in E}a_e x^{\delta}_e =b$. Then
		$\sum_{e \in E}a_ex_e \geq b$ defines a \nice facet of $\Mcut(\G,S)$.
	\end{lem}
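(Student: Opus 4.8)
The plan is to verify the two defining properties of a shared facet separately. Facet-definingness for the dominant $\Mcut(\G,S)$ is immediate: the zero-lifted inequality $\sum_{e\in E}a_ex_e\ge b$ is exactly the lift considered in \Cref{thm: projection}\textit{(iv)} (taking $H=G$ inside the ambient graph $\G$), so that result yields it for free. All the work therefore goes into showing the inequality is also facet-defining for the polytope $\PMcut(\G,S)$, i.e.\ that the shared property survives the edge addition --- which, as the discussion preceding the lemma notes, is precisely what can fail for a generic lift.

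For the polytope part I would argue by exhibiting enough affinely independent tight multicuts. Write $m=\dim\PMcut(G,S)$. Since $a^\T x\ge b$ is facet-defining for $\PMcut(G,S)$, there are $m$ affinely independent $G$-multicuts $\delta_1,\dots,\delta_m$ with $a^\T x^{\delta_i}=b$. The key observation is that every $G$-multicut $\delta$ yields a $\G$-multicut $\delta\cup\{e^*\}$, because $\G-(\delta\cup\{e^*\})=G-\delta$, and since we lift with coefficient $0$ on $e^*$ this preserves tightness. Thus $\delta_1\cup\{e^*\},\dots,\delta_m\cup\{e^*\}$ are $m$ affinely independent tight $\G$-multicuts, all lying in the hyperplane $\{x_{e^*}=1\}$. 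The hypothesis now supplies the single missing point off that hyperplane: $\delta^*$ is a tight $\G$-multicut with $x^{\delta^*}_{e^*}=0$, and since any affine combination of points with $x_{e^*}=1$ again has $x_{e^*}=1\ne 0$, it is affinely independent from the previous $m$. This produces $m+1$ affinely independent tight points.

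It remains to package this as a facet, which is the dimension bookkeeping. The orthogonal projection forgetting the $e^*$-coordinate maps $\PMcut(\G,S)$ onto $\PMcut(G,S)$ (every $G$-multicut $\gamma$ is the image of $\gamma\cup\{e^*\}$, and every $\G$-multicut restricts to a $G$-multicut), so $\dim\PMcut(\G,S)\le m+1$. Since $a^\T x\ge b$ is a proper face of $\PMcut(G,S)$, there is a $G$-multicut $\gamma$ with $a^\T x^\gamma>b$, and then $\gamma\cup\{e^*\}$ witnesses that our face is proper in $\PMcut(\G,S)$; hence its dimension is at most $\dim\PMcut(\G,S)-1\le m$. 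Combined with the $m+1$ affinely independent tight points found above, the face has dimension exactly $m$ and $\dim\PMcut(\G,S)=m+1$, so it is a facet. I expect the main obstacle to be precisely this interplay between the hyperplanes $\{x_{e^*}=0\}$ and $\{x_{e^*}=1\}$: a generic lift only provides tight points with $x_{e^*}=1$ and therefore spans one dimension too few, and it is exactly the hypothesized multicut $\delta^*$ --- a tight $\G$-multicut that avoids $e^*$ --- which restores the missing dimension without destroying tightness.
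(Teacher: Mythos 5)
Your proposal is correct and follows essentially the same route as the paper's own proof: facet-definingness for the dominant is delegated to \Cref{thm: projection} (iv), and the \nice property is obtained by lifting tight multicuts $\delta_i$ to $\delta_i\cup\{e^*\}$ inside the hyperplane $\{x_{e^*}=1\}$ and adjoining $\delta^*$ as the single tight point off that hyperplane. If anything, your bookkeeping is more careful than the paper's: the paper takes $m=|E|$ affinely independent tight multicut incidence vectors (implicitly treating $\PMcut(G,S)$ as full-dimensional), whereas your argument with $m=\dim\PMcut(G,S)$, the projection bound $\dim\PMcut(\G,S)\le m+1$, and the properness witness $\gamma\cup\{e^*\}$ goes through without that assumption.
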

	\begin{proof}
		It follows directly from \Cref{thm: projection} (iv) that the inequality is facet-defining. Thus, it is only left to show that that the defined facet is indeed \nice, i.e., that there are $|\E|$ affinely independent incidence vectors of multicuts contained in this facet.
		
		To this end, let $m=|E|$.
		Since $a^\T x \geq b$ is facet-defining for $\Mcut(G,S)$, there are multicuts $\delta_1, \dots, \delta_m$ in $G$ such that $a^\T x^{\delta_i}= b$ for each $i \in [m]$ and $x^{\delta_1}, \dots, x^{\delta_m}$ are affinely independent. Now let $\widebar{\delta}_i= \delta_i \cup \{e^*\}$. Then, $\widebar{\delta}_i$ is a multicut in $\G$ with $\sum_{e \in E} a_e x^{\widebar{\delta}_i}_e =b$. Since $x^{\widebar{\delta}_1}, \dots, x^{\widebar{\delta}_m}$ are affinely independent and all contained in the hyperplane $\{x_{e^*}=1\}$, the vectors $x^{\delta^*},x^{\widebar{\delta}_1}, \dots, x^{\widebar{\delta}_m}$ are affinely independent and satisfy $\sum_{e \in E}a_e x_e=b$. 
	\end{proof}
	
	\begin{thm}\label{thm:lifting_inducedSubgraph}
		Let $G=(V,E)$ be a graph, $S \subseteq \binom{V(G)}{2}$, $H=(W,F)$ be an induced subgraph of $G$, and $S'=S \cap \binom{W}{2}$. 
		Assume that  for each $vw \in E \setminus F$ we have $\{v,w\}\notin S$ and there is no $\{s,t\}\in S$ with $s \in W$ and $t$ adjacent to $W$.
		Furthermore let $a^\T x \geq b$ define a \nice facet $\Mcut(H,S')$.
		Then, $\sum_{e \in F}a_e x_e \geq b$ defines a \nice facet of $\Mcut(G,S)$.
	\end{thm}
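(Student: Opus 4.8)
The plan is to reduce to the situation of \Cref{lem:0lifting_addEdge_nicefacet} and then add the edges of $E \setminus F$ one at a time. First I would pass to the graph $\widebar{H}=(V,F)$ obtained from $H$ by adding the nodes of $V \setminus W$ as isolated nodes. Since every node in $V \setminus W$ is isolated in $\widebar{H}$, any terminal pair in $S$ with an endpoint outside $W$ is automatically separated, so $S$- and $S'$-multicuts in $\widebar{H}$ coincide; as the multicut incidence vectors live in $\R^F$ and are identical for $H$ and $\widebar{H}$, and adding isolated nodes does not change the dominant, this yields $\Mcut(\widebar{H},S)=\Mcut(\widebar{H},S')=\Mcut(H,S')$ together with $\PMcut(\widebar{H},S)=\PMcut(H,S')$. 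Hence $\sum_{e \in F}a_e x_e \geq b$ defines a \nice facet of $\Mcut(\widebar{H},S)$, and we may assume $W=V$.

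Next I would enumerate $E \setminus F=\{e_1,\dots,e_k\}$ and set $G_i=(V,F \cup \{e_1,\dots,e_i\})$, so that $G_0=\widebar{H}$ and $G_k=G$. I claim that $\sum_{e \in F}a_e x_e \geq b$ defines a \nice facet of $\Mcut(G_i,S)$ for all $i$, proceeding by induction on $i$ via \Cref{lem:0lifting_addEdge_nicefacet} applied with $e^*=e_i$. To invoke that lemma it suffices to produce, for each $i$, an $S$-multicut $\delta^*$ in $G_i$ with $e_i \notin \delta^*$ and $\sum_{e \in F}a_e x^{\delta^*}_e=b$.

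The concrete candidate is the following: I would fix a multicut $\delta_H$ in $H$ with $\sum_{e \in F}a_e x^{\delta_H}_e=b$ (one exists since the inequality is facet-defining for $\Mcut(H,S')$) and put $\delta^*=\delta_H \cup \{e_1,\dots,e_{i-1}\}$. Then $e_i \notin \delta^*$, and since $a$ is supported on $F$ the equality $\sum_{e \in F}a_e x^{\delta^*}_e=\sum_{e \in F}a_e x^{\delta_H}_e=b$ is immediate. The crux is to verify that $\delta^*$ really is an $S$-multicut in $G_i$. In $G_i-\delta^*$ the surviving edges are exactly $(F \setminus \delta_H)\cup\{e_i\}$, so every node of $V \setminus W$ other than a possible endpoint of $e_i$ is isolated, and because $H$ is induced the edge $e_i$ has at least one endpoint outside $W$. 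I would then run through the terminal pairs $\{s,t\}\in S$: if both lie in $W$ they remain separated, since $e_i$ only attaches an isolated node or a pendant node to $H-\delta_H$ and thus creates no new $W$-$W$ path; if both lie outside $W$ they can be joined only when $\{s,t\}=e_i$, which is excluded by the hypothesis that no edge of $E \setminus F$ is a terminal pair; and if $s \in W$ but $t \notin W$, then $t$ is separated from $s$ unless $e_i$ joins $t$ directly to $W$, i.e.\ unless $t$ is adjacent to $W$, which is excluded by the second hypothesis.

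This case distinction — driven precisely by the two standing assumptions — is the main obstacle, and I expect it to be where all the hypotheses are consumed. Once it is settled, \Cref{lem:0lifting_addEdge_nicefacet} advances the induction from $G_{i-1}$ to $G_i$, and iterating up to $G_k=G$ shows that $\sum_{e \in F}a_e x_e \geq b$ defines a \nice facet of $\Mcut(G,S)$, as claimed.
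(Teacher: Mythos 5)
Your proposal is correct and takes essentially the same route as the paper: reduce to $W=V$ by adding the nodes of $V\setminus W$ as isolated nodes, then insert the edges of $E\setminus F$ one at a time via \Cref{lem:0lifting_addEdge_nicefacet}, certifying each step with the tight multicut $\delta_H\cup\{e_1,\dots,e_{i-1}\}$ — which is precisely the restriction to the intermediate graph of the paper's multicut $(\delta\cup(E\setminus F))\setminus\{e_i\}$. Your explicit case distinction on terminal pairs merely spells out what the paper asserts directly from its decomposition of $G-\widebar{\delta}_e$, consuming the two hypotheses in the same way.
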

	\begin{proof}
		Let $G'=(V,F)$. Clearly, each $S'$-multicut in $H$ is an $S$-multicut in $G'$ and we thus have $\Mcut(H,S')=\Mcut(G',S)$.
		
		In the following we add the edges in $E \setminus F$ one by one to $G'$ and utilize \Cref{lem:0lifting_addEdge_nicefacet} to lift the facet-defining inequality under consideration.
		To this end we construct for each each $e \in E\setminus F$ an $S$-multicut $\widebar{\delta}_e \subseteq E$ in $G$  such that $e \notin \widebar{\delta}_e$ and $\sum_{f \in E}a_f x^{\widebar{\delta}_e} =b$. Since this multicut induces an according multicut in each step, this yields the claim.
		
		Now, let $e=vw \in E\setminus F$, $\delta \subseteq F$ be an $S$-multicut in $H$ (and thus in $G'$) with $a^\T x^\delta =b$, and $\widebar{\delta}_e=(\delta \cup (E\setminus F)) \setminus \{e\}$.
		Since $H$ is induced and there is no  $\{s,t\}\in S$ with $s \in W$ and $t$ adjacent to $W$,
		for each $\{s,t\} \in S$ there is no $s$-$t$-path in
		$$G-\widebar{\delta}_e=\left( V \setminus (W \cup \{v,w\}),\emptyset \right)\dotcup \left( (H -\delta) \cup (\{v,w\},\{e\})\right).$$
		Thus, $\widebar{\delta}_e$ is an $S$-multicut in $G$.
		Furthermore, we have $\sum_{f \in F} a_fx^{\widebar{\delta}_e}_f =b$.
	\end{proof}
	
	Finally, we investigate edge- and path inequalities for the multicut dominant. To this end, we recall the facet description for the s-t-cut dominant:
	\begin{prop}\cite[Section 2]{s-t-cut_dominant}\label{prop:facets_s-t-cut}
		Let $G=(V,E)$ and $s,t \in V$. Then, the \emph{$s$-$t$-cut dominant} $\Mcut(G,\{\{s,t\}\})$ is completely defined by the inequalities
		\begin{alignat*}{2}
			x_e						&\geq 
			\begin{cases}
				1, &\text{if $e=st$,}\\
				0, & \text{otherwise.}
			\end{cases}
			\qquad	&&\text{for all $e \in E$ },			\\
			\sum_{e \in E(P)} x_e	& \geq 1, 			&&\text{for all $s$-$t$-paths } P.
		\end{alignat*}
		In particular, each of these inequalities defines a facet of  $\Mcut(G,\{\{s,t\}\})$.
		\end{prop}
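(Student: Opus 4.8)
The plan is to prove the two assertions separately: first that the listed inequalities give a complete description, and then that each defines a facet. Before either, I would reduce the system to its essential form. Validity is immediate: nonnegativity $x_e \geq 0$ holds since $\Mcut(G,\{\{s,t\}\}) \subseteq \R^E_{\geq 0}$, and for any $s$-$t$-path $P$ every $s$-$t$-cut $\delta$ must meet $E(P)$ (otherwise $P$ survives in $G-\delta$), so $\sum_{e \in E(P)} x^\delta_e \geq 1$; this extends to the whole dominant because adding vectors in $\R^E_{\geq 0}$ only increases the left-hand side. I would then observe that the edge inequality $x_{st} \geq 1$ is exactly the path inequality for the single-edge path $(s,t)$, so the entire system is $Q := \{x \in \R^E : x \geq \bnull,\ \sum_{e \in E(P)} x_e \geq 1 \text{ for every } s\text{-}t\text{-path } P\}$, and we have just shown $\Mcut(G,\{\{s,t\}\}) \subseteq Q$.

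For completeness I would establish the reverse inclusion $Q \subseteq \Mcut(G,\{\{s,t\}\})$. Since $Q \subseteq \R^E_{\geq 0}$, it is pointed, and since every defining normal lies in $\R^E_{\geq 0}$, its recession cone is exactly $\R^E_{\geq 0}$, matching that of the dominant; it therefore suffices to show that every vertex $x^*$ of $Q$ is the incidence vector of an $s$-$t$-cut. This is the main obstacle and the one step needing genuine combinatorial input. At a vertex the cone of objectives $c$ minimized at $x^*$ is full-dimensional and generated by the active constraint normals, all of which are nonnegative (unit vectors from $x_e \geq 0$ and incidence vectors of paths), so one can pick $c \in \R^E_{\geq 0}$ for which $x^*$ is the unique minimizer of $c^\T x$ over $Q$. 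By LP duality $\min\{c^\T x : x \in Q\}$ equals the path-formulation of maximum fractional $s$-$t$-flow with capacities $c$, whose value equals the minimum cut value $c(\delta^*)$ by the max-flow min-cut theorem. As $x^{\delta^*} \in \Mcut(G,\{\{s,t\}\}) \subseteq Q$ attains this same value, uniqueness forces $x^* = x^{\delta^*}$. Hence every vertex of $Q$ lies in the dominant, and $Q = \conv(\text{vertices}(Q)) + \R^E_{\geq 0} \subseteq \Mcut(G,\{\{s,t\}\})$.

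For the facet statements I would use that, since $\Mcut(G,\{\{s,t\}\})$ is full-dimensional, a valid inequality $a^\T x \geq b$ is facet-defining precisely when its face is nonempty and proper and every valid $c^\T x \geq d$ whose equality set contains that face satisfies $(c,d) = \lambda(a,b)$ for some $\lambda \geq 0$; validity forces $c \geq \bnull$. For a path inequality with $P = (s=v_0,\dots,v_k=t)$ and edges $p_i = v_{i-1}v_i$, I would argue in two moves: for each $f \notin E(P)$, the point $x + x^{\{f\}}$ stays on the face, forcing $c_f = 0$; and for each $i$ the cut consisting of all edges with exactly one endpoint in $\{v_0,\dots,v_{i-1}\}$ is an $s$-$t$-cut meeting $E(P)$ only in $p_i$, hence lies on the face and gives $c_{p_i} = c^\T x^{\delta} = d$. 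Together these yield $c = d\,x^{E(P)}$ and $(c,d) = d\,(x^{E(P)},1)$, as required.

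The two edge families are the degenerate cases of the same argument. For $x_{st} \geq 1$ the recession step annihilates every $c_f$ with $f \neq st$, and any cut (each contains $st$) evaluates to $c_{st} = d$. For $x_e \geq 0$ with $e \neq st$ the recession step annihilates every $c_f$ with $f \neq e$, while the star cut at whichever of $s,t$ is not incident to $e$ avoids $e$ and gives $d = 0$, so $(c,d) = c_e\,(x^{\{e\}},0)$. In each case $(c,d)$ is a nonnegative multiple of the inequality in question, so it is facet-defining. I expect the only routine points requiring care to be the existence, for a path inequality, of a cut meeting $E(P)$ in exactly one edge, and, for $x_e \geq 0$, of a cut avoiding $e$ (handled by the appropriate star cut at $s$ or $t$).
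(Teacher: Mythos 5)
Your proposal is correct, but note that the paper itself gives no proof of this statement: it is quoted from the reference on the $s$-$t$-cut dominant (\cite[Section 2]{s-t-cut_dominant}), where the complete description is attributed to classical max-flow/min-cut and blocking-polyhedra theory. So the comparison is really with that classical argument, and your proof is essentially a self-contained reconstruction of it. Your key step -- reducing completeness to showing that every vertex $x^*$ of the path-inequality polyhedron $Q$ is a cut incidence vector, by picking a nonnegative objective $c$ in the interior of the normal cone at $x^*$, invoking LP duality between fractional cuts and fractional path flows, and then using max-flow min-cut to produce an integral cut $\delta^*$ with $c^\T x^{\delta^*}$ equal to the LP optimum, so that uniqueness forces $x^*=x^{\delta^*}$ -- is exactly the mechanism behind Fulkerson's blocking duality, phrased in an elementary way; the preliminary observations (recession cone of $Q$ equals $\R^E_{\geq 0}$, $Q$ pointed and hence generated by its vertices plus the recession cone) are the right bookkeeping to make that reduction legitimate. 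The facet arguments are also sound: the ``add $x^{\{f\}}$ to kill off-support coefficients'' step is valid because the dominant is closed under adding nonnegative vectors, and the cuts you exhibit (the prefix cut $\delta(\{v_0,\dots,v_{i-1}\})$ meeting $E(P)$ only in $p_i$, the star cut at $s$ or $t$ avoiding a non-$st$ edge) do lie on the respective faces, which also settles nonemptiness; properness is immediate from unboundedness in any support direction. What your route buys over the paper's citation is self-containedness and a technique (unique minimizer plus LP duality) that is reusable; what the citation buys is brevity and the additional structural results (vertex and adjacency characterizations) proved in that reference, which your argument does not recover.
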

	Together with \Cref{thm: projection} (iv) this yields the following:
	\begin{cor}\label{cor:edge_and_path-ineq}
		Let $G=(V,E)$, $S$ be a set of terminal pairs and $\{s,t\}\in S$. Then, the following hold:
		\begin{enumerate}[(i)]
			\item for each $vw \in E$ the inequality 
				$$x_{vw} \geq \begin{cases}
				1,	&\text{if }\{v,w\}\in S,\\
				0,	&\text{otherwise}
				\end{cases}$$
				is facet-defining for $\Mcut(G,S)$.
			\item For each $s$-$t$-path $P\subseteq G$ such that there does not exist $\{s',t'\}\in S$ with $s',t' \in V(P)$ and $\{s,t\}\neq \{s',t'\}$, the inequality 
			 $\sum_{e \in E(P)}x_e \geq 1$ is facet-defining for $\Mcut(G,S)$
		\end{enumerate}
	\end{cor}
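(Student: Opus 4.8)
The plan is to realize each of the inequalities in question as a facet of the $s$-$t$-cut dominant of a suitably chosen subgraph $H \subseteq G$ and then transport it to $\Mcut(G,S)$ via \Cref{thm: projection}~(iv). Recall that for this lifting result the restricted terminal set is forced to be $S' = S \cap \binom{V(H)}{2}$, so the whole difficulty lies in choosing $H$ so that its induced terminal set is exactly a single pair (or empty), which is what lets us invoke \Cref{prop:facets_s-t-cut}.

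For part~(i) I would distinguish two cases according to whether $vw$ is itself a terminal pair. If $\{v,w\} \in S$, I take $H$ to be the single-edge subgraph on vertex set $\{v,w\}$ with edge $vw$; then $S' = S \cap \binom{\{v,w\}}{2} = \{\{v,w\}\}$, so $\Mcut(H,S')$ is the $v$-$w$-cut dominant. Here the single edge $vw$ plays the role of $st$, so \Cref{prop:facets_s-t-cut} gives that $x_{vw} \geq 1$ is facet-defining for $\Mcut(H,S')$, and \Cref{thm: projection}~(iv) lifts this to $\Mcut(G,S)$. If $\{v,w\} \notin S$, the same one-edge subgraph now has $S' = \emptyset$, so $\Mcut(H,\emptyset)$ is the half-line $\R_{\geq 0}$ (its unique minimal multicut being the empty set); the nonnegativity $x_{vw} \geq 0$ cuts out its single $0$-dimensional facet and is therefore facet-defining, and again \Cref{thm: projection}~(iv) yields the claim.

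For part~(ii) I would take $H$ to be the path $P$ itself, i.e.\ $V(H) = V(P)$ and $E(H) = E(P)$. The crucial point is the hypothesis on $P$: since no pair $\{s',t'\} \in S$ other than $\{s,t\}$ has both endpoints in $V(P)$, the induced terminal set collapses to exactly $S' = \{\{s,t\}\}$, and $\Mcut(H,S')$ is the $s$-$t$-cut dominant of the path $P$. As $P$ is an $s$-$t$-path in $H$ (indeed the only one), \Cref{prop:facets_s-t-cut} shows the path inequality $\sum_{e \in E(P)} x_e \geq 1$ is facet-defining for $\Mcut(H,S')$; \Cref{thm: projection}~(iv) then transports it to $\Mcut(G,S)$.

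The only genuinely delicate step is verifying that the induced terminal set $S'$ reduces to a single pair (or to the empty set), since this is precisely what is needed to bring \Cref{prop:facets_s-t-cut} to bear. In part~(ii) this is exactly where the assumption that $P$ carries no second terminal pair enters, while in part~(i) it is automatic from working with a single edge. Everything else is a direct application of \Cref{prop:facets_s-t-cut} followed by the facet-preserving lift of \Cref{thm: projection}~(iv).
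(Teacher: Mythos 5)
Your proposal is correct and is essentially the paper's own argument: the paper derives this corollary exactly by combining \Cref{prop:facets_s-t-cut} applied to the relevant subgraph (a single edge or the path $P$, whose induced terminal set collapses to $\{\{s,t\}\}$ by the hypothesis on $P$) with the facet-preserving lifting of \Cref{thm: projection}~(iv). Your additional care in the case $\{v,w\}\notin S$, where $S'=\emptyset$ and you verify facetness of $x_{vw}\geq 0$ for $\Mcut(H,\emptyset)=\R_{\geq 0}$ directly rather than via the proposition, is a reasonable filling-in of a detail the paper leaves implicit.
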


	Let $a^\T x \geq b$ be facet-defining for some $\Mcut(G,S)$. In general, this inequality is not facet-defining for $\Mcut(G,S')$ with $S' \supset S$.
	\begin{obs}
		Consider a path $P=([n],\{\{i,i+1\}: 1 \leq i < n\})$, $S'=\{(1,n)\}$, and $S=\{\{1,n\},\{i,j\}\}$ for some $\{i,j\}\in \binom{[n]}{2}\setminus \{\{1,n\}\}$.
		Then $\sum_{e \in E(P)} \geq 1$ is facet-defining for $\Mcut(P,S')$ but not for $\Mcut(P,S)$. Moreover, this carries over to arbitrary $G$ with $P \subseteq G$.
	\end{obs}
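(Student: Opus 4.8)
The plan is to treat the three assertions separately, dispatching the two facet-defining claims with the machinery already available and reserving the real work for the single negative claim.

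For the first assertion, that $\sum_{e \in E(P)} x_e \geq 1$ defines a facet of $\Mcut(P,S')$, I would simply invoke \Cref{cor:edge_and_path-ineq}~(ii): the whole of $P$ is a $1$-$n$-path in the graph $P$, and since $S'=\{\{1,n\}\}$ contains no terminal pair other than $\{1,n\}$ itself, the hypothesis of the corollary is vacuously satisfied. Hence the inequality is facet-defining for $\Mcut(P,S')$.

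The core of the observation is the second assertion. Assume without loss of generality $i<j$ and let $Q \subseteq P$ be the subpath between $i$ and $j$; because $\{i,j\} \neq \{1,n\}$ we have $E(Q) \subsetneq E(P)$, so we may fix an edge $e' \in E(P)\setminus E(Q)$. The key structural point is that in a path every $S$-multicut already has to separate $i$ from $j$, which forces the deletion of at least one edge of $Q$; consequently $\sum_{e \in E(Q)} x_e \geq 1$ is valid for $\Mcut(P,S)$, and the full path inequality is its sum with the nonnegativities $x_e\geq 0$ over $E(P)\setminus E(Q)$. I would then analyse the face $F=\Mcut(P,S)\cap\{\sum_{e\in E(P)}x_e=1\}$: writing a point of $F$ as $x^\delta+y$ with $\delta$ an $S$-multicut and $y \in \R^{E}_{\geq 0}$, and splitting the sum over $E(P)$ into its parts on $E(Q)$ and on $E(P)\setminus E(Q)$, the bound $\sum_{e\in E(Q)} x^\delta_e\geq 1$ forces both the remaining edge-coordinates of $x^\delta$ and all of $y$ to vanish. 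In particular every point of $F$ satisfies $x_{e'}=0$, so $F$ lies in the intersection of the two hyperplanes $\{\sum_{e\in E(P)}x_e=1\}$ and $\{x_{e'}=0\}$, whose normals are linearly independent. Thus $\dim F\leq |E(P)|-2<|E(P)|-1=\dim\Mcut(P,S)-1$, and $F$ is not a facet.

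Finally, for the transfer to an arbitrary $G$ with $P\subseteq G$, the facet-defining statement over $S'$ is again immediate from \Cref{cor:edge_and_path-ineq}~(ii), since $P$ is now a $1$-$n$-path in $G$ and $S'$ still has no other terminal pair inside $V(P)$. For the negative statement I would argue by contraposition using \Cref{thm: projection}~(iii) with $H=P$: the support of the path inequality is $E(P)\subseteq E(H)$ and $S\cap\binom{V(P)}{2}=S$, because both terminal pairs live on $[n]=V(P)$; hence, were the inequality facet-defining for $\Mcut(G,S)$, it would restrict to a facet of $\Mcut(P,S)$, contradicting the second assertion. I expect the only genuinely delicate step to be the face analysis in the second assertion — precisely, arguing that equality in the path inequality forces both the recession part $y$ and the off-$Q$ edges of the multicut to be zero — while the remaining parts reduce cleanly to the already-established corollary and projection theorem.
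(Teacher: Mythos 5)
The paper records this statement as a bare observation and supplies no proof, so there is nothing to compare against line by line; judged on its own, your argument is correct, and it is assembled entirely from the paper's own toolkit: \Cref{cor:edge_and_path-ineq}~(ii) for the two positive claims, and the contrapositive of \Cref{thm: projection}~(iii) (with $H=P$, noting $\supp(a)=P$ and $S\cap\binom{V(P)}{2}=S$) to transfer the negative claim from $P$ to an arbitrary $G\supseteq P$.

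One small repair is needed in the core step. A point of $\Mcut(P,S)$ is not in general of the form $x^\delta+y$ for a \emph{single} multicut $\delta$; it is $\sum_k\lambda_k x^{\delta_k}+y$ for a convex combination of incidence vectors of multicuts plus $y\in\R^{E(P)}_{\geq 0}$. Your argument survives verbatim under this correction: since every $S$-multicut must meet $E(Q)$ (the subpath $Q$ is the unique $i$-$j$-path in $P$), equality in $\sum_{e\in E(P)}x_e=1$ forces $y=\bnull$ and $|\delta_k|=1$ for every $k$ with $\lambda_k>0$, hence $\delta_k=\{e\}$ with $e\in E(Q)$; so every point of the face lies in $\{x_{e'}=0\}$ for your fixed $e'\in E(P)\setminus E(Q)$, and the two independent normals give $\dim F\leq|E(P)|-2$, while facets of the full-dimensional $\Mcut(P,S)$ have dimension $|E(P)|-1$. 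Alternatively, you could finish with the decomposition you already noted---$\sum_{e\in E(P)}x_e\geq 1$ is the sum of the valid inequality $\sum_{e\in E(Q)}x_e\geq 1$ and the edge inequalities $x_{e'}\geq 0$, $e'\in E(P)\setminus E(Q)$, and these define strictly larger faces---which is exactly the style of non-facetness argument the paper itself uses in its theorem on induced paths; either route closes the proof.
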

	
	We close the discussion of path- and edge inequalities by investigating the relation of the polyhedron defined by these inequalities and the multicut dominant:
	\begin{lem}
		Let $G=(V,G)$ be a graph and $S \subseteq \binom{V}{2}$ be a set of terminal pairs. Let $\mathcal P$ be the polyhedron defined by the inequalities
		\begin{alignat*}{2}
			x_{vw}					& \geq
			\begin{cases}
				1,	&\text{if $\{v,w\} \in S$,}\\
				0,	&\text{else}
			\end{cases}
			\qquad&&\text{for all $vw \in E$},			\\
			\sum_{e \in E(P)} x_e	& \geq 1 			&&\text{for all $s$-$t$-paths $P$ with $\{s,t\}\in S$.}
		\end{alignat*}
		Then, the integer points in $\mathcal P$ are precisely those in $\Mcut(G,S)$, i.e., $\mathcal P \cap \Z^n=\Mcut(G,S)\cap \Z^n$.
	\end{lem}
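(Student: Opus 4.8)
The plan is to prove the two inclusions $\Mcut(G,S)\cap\Z^E\subseteq\mathcal P\cap\Z^E$ and $\mathcal P\cap\Z^E\subseteq\Mcut(G,S)\cap\Z^E$ separately. The first inclusion in fact holds without any integrality restriction, since it suffices to check that each inequality defining $\mathcal P$ is valid for $\Mcut(G,S)$. Nonnegativity $x_e\ge 0$ holds on the entire dominant, because $\Mcut(G,S)=\PMcut(G,S)+\R^E_{\ge 0}$ and every multicut incidence vector is $0/1$. The bound $x_{vw}\ge 1$ for $\{v,w\}\in S$ is valid because the terminals $v,w$ must be separated, so $vw$ lies in every multicut (this is \Cref{cor:edge_and_path-ineq}(i)). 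Finally, for an $s$-$t$-path $P$ with $\{s,t\}\in S$, every multicut deletes at least one edge of $P$, so $\sum_{e\in E(P)}x_e\ge 1$ is valid. Hence $\Mcut(G,S)\subseteq\mathcal P$, which yields the first inclusion a fortiori.

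For the reverse inclusion I would use the following elementary description of the integer points of the dominant. If $x\in\R^E$ is a nonnegative integer vector, set $\delta(x)=\{e\in E : x_e\ge 1\}$. Whenever $\delta(x)$ is a multicut we have $x\ge x^{\delta(x)}$ coordinatewise, and therefore $x=x^{\delta(x)}+\bigl(x-x^{\delta(x)}\bigr)\in\PMcut(G,S)+\R^E_{\ge 0}=\Mcut(G,S)$. Thus the whole reverse inclusion reduces to showing that for every integer point $x\in\mathcal P$ the edge set $\delta(x)$ is indeed a multicut.

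The verification that $\delta(x)$ is a multicut is the one place where integrality is used. Since $x$ is a nonnegative integer vector, $e\notin\delta(x)$ forces $x_e=0$. Suppose $\delta(x)$ were not a multicut; then there is a pair $\{s,t\}\in S$ and an $s$-$t$-path $P$ in $G-\delta(x)$, which is in particular an $s$-$t$-path in $G$ all of whose edges satisfy $x_e=0$. This gives $\sum_{e\in E(P)}x_e=0<1$, contradicting the path inequality for $P$ (for a single-edge path $P=st$ this is exactly the edge inequality $x_{st}\ge 1$). Hence no such path exists, $\delta(x)$ separates every terminal pair, and so $x\in\Mcut(G,S)$ by the previous paragraph.

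I expect no deep obstacle here; the only step requiring care is the implication $x_e<1\Rightarrow x_e=0$, which is precisely what lets one read the multicut $\delta(x)$ off the support of $x$ and thereby certify membership in the dominant. This step genuinely fails for fractional points, as the $K_{1,3}$ example from the introduction illustrates (all edge variables equal to $\tfrac12$ satisfies every edge and path inequality yet does not lie in the dominant), so the integrality hypothesis is essential and cannot be removed.
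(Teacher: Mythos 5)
Your proof is correct, and its overall shape (two inclusions; for the reverse one, round the integer point to its $0/1$ support, certify that the support is a multicut, then use dominance) matches the paper's. The difference lies in what the reverse inclusion leans on. The paper first invokes the completeness half of \Cref{prop:facets_s-t-cut} to conclude $x \in \Mcut(G,\{s,t\})$ for every pair $\{s,t\}\in S$, then truncates $x$ to a $0/1$ vector $x'$ and argues $x'$ lies in each single-pair dominant, so that $x'$ is the incidence vector of an $S$-multicut. You instead bypass the $s$-$t$-cut dominant entirely: you read $\delta(x)$ off the support, use integrality plus the path inequalities of $\mathcal P$ directly to show $\delta(x)$ separates every terminal pair, and write $x = x^{\delta(x)} + \bigl(x - x^{\delta(x)}\bigr) \in \PMcut(G,S)+\R^E_{\geq 0}$. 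Your route is more self-contained -- it needs only the validity (easy) direction of the edge/path inequalities, not the nontrivial complete-description theorem for the $s$-$t$-cut dominant -- and it also makes explicit the step the paper leaves implicit, namely why an integer point of the single-pair dominant may be truncated to $0/1$ without leaving it (that justification is exactly your integrality argument). What the paper's phrasing buys is brevity, since citing \Cref{prop:facets_s-t-cut} compresses the pair-by-pair reasoning into one line. Your closing remark that integrality is essential, witnessed by the all-$\tfrac12$ point on $K_{1,3}$, correctly identifies the same obstruction the paper mentions in its introduction.
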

	\begin{proof}
		Since each $S$-multicut in $G$ is an $s$-$t$-cut in $G$ for all $\{s,t\} \in S$, \Cref{prop:facets_s-t-cut} yields $\Mcut(G,S)\subseteq \mathcal P$.
		Now, let $x \in \mathcal P \cap \Z^E$ and define $x' \in\{0,1\}^E$ by setting $x'_e=1$ if $x_e \neq 0$ and $x'_e=0$ otherwise.
		Since $x \in \mathcal P$ we have $x \in \Mcut(G,\{s,t\})$ and thus, $x'\in \Mcut(G,\{s,t\})$ for all $\{s,t\} \in S$.
		Hence, $x'$ is incidence vector of some $S$-multicut in $G$, i.e. $x' \in \Mcut(G,S)$ and thus, $x \in \Mcut(G,S)$. 
	\end{proof}

\section{Constructing Facets from Facets}\label{sec:constructing_facets}
In \cite{OnTheCutPolytope} an extensive study on the effect of graph operations such as node splittings and edge subdivisions on the max-cut polytope and its facet-defining inequalities has been conducted. Motivated by this, we investigate the same questions for the multicut dominant. Note that in the results from \cite{OnTheCutPolytope} it is always mentioned that certain edges might be added to the newly obtained graph by attaching coefficient $0$ to them. Since for $\Mcut(G,S)$ arbitrary edges can be added by attaching weight $0$ (see Theorems \ref{thm: projection} (iv) and \ref{thm:lifting_inducedSubgraph}), we do not mention this explicitly in each result.

In \Cref{obs: contraction} we have already seen the effect of edge contractions on the multicut dominant.
Now, we investigate the inverse operation:

\begin{thm}[Node splitting]\label{thm: NodeSplitting}
	Let $G=(V,E)$ be a graph, $S \subseteq \binom{V}{2}$, $a^\T x \geq b$ be facet-defining for $\Mcut(G,S)$, and $v \in \supp(a)$ be a node.
	
	Obtain $\widebar{G}=(\widebar{V},\widebar{E})$ as follows: replace $v$ by two adjacent nodes $v_1$ and $v_2$ and distribute the edges incident to $v$ arbitrarily among $v_1$ and $v_2$.
	Furthermore, obtain $\wS$ from $S$ by replacing each pair $\{v,t\} \in S$ independently by a (not necessarily strict) subset of $\{\{v_1,t\},\{v_2,t\}\}$.
	
	Define $\varphi\colon\widebar{E}\setminus\{v_1v_2\} \to E$ by  
	$$\varphi(e)=\begin{cases}
		e, 	&\text{if } v_1,v_2\notin e, \\
		vw,	&\text{if } e=v_iw\  (i=1,2).
	\end{cases} $$
	Let $\omega$ be the value of a minimum $\widebar{S}$-multicut in $\widebar{G} - v_1v_2$ when considering $a_{\varphi(e')}$, $e' \in \E\setminus \{v_1v_2\}$, as edge weights.
	Define $\widebar{a}\in \R^\E$ by
	$$\widebar{a}_e=
	\begin{cases}
		b-\omega,&\text{if $e=v_1v_2$,}\\
		a_{\varphi(e)}, &\text{otherwise.}
	\end{cases} $$
	Then $\widebar{a}^\mathsf{T} x \geq b$ defines a facet of $\Mcut(\G,\widebar{S})$.
	
	In particular, if $a^\T x \geq b$ defines a \nice facet of $\Mcut(G,S)$, so does  $\widebar{a}^\mathsf{T} x \geq b$ for $\Mcut(\G,\widebar{S})$.
\end{thm}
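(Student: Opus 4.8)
The plan is to show first that $\a^\T x\geq b$ is valid for $\Mcut(\G,\wS)$, and then to exhibit $|\E|$ affinely independent points of $\Mcut(\G,\wS)$ on the hyperplane $\{\a^\T x=b\}$; since $\dim\Mcut(\G,\wS)=|\E|$, this certifies a facet. Two structural facts underlie everything. First, $\varphi$ restricts to a \emph{bijection} $\E\setminus\{v_1v_2\}\to E$ (the edges at $v$ are distributed, so for each $w$ exactly one of $v_1w,v_2w$ survives); hence ``padding a vector on $E$ by a zero in the $v_1v_2$-coordinate and relabelling by $\varphi^{-1}$'' is a linear injection sending $\Mcut(G,S)$ into $\{x_{v_1v_2}=0\}$. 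Second, an $\wS$-multicut $\delta$ of $\G$ with $v_1v_2\notin\delta$ projects, via contraction of $v_1v_2$, to the edge set $\varphi(\delta)$ in $G$, and this is an $S$-multicut: pairs avoiding $v$ are separated directly, while for $\{v,t\}\in S$ at least one of $\{v_1,t\},\{v_2,t\}$ lies in $\wS$ and is separated in $\G-\delta$, and since $v_1$ and $v_2$ are connected, separating one copy from $t$ forces the other to be separated too. The inverse assignment $\gamma\mapsto\varphi^{-1}(\gamma)$ sends $S$-multicuts of $G$ to $\wS$-multicuts of $\G$ leaving $v_1v_2$ uncut.

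For validity I split on whether $v_1v_2\in\delta$. If $v_1v_2\notin\delta$, then by the projection fact $\a^\T x^\delta=a^\T x^{\varphi(\delta)}\geq b$ by validity of the original inequality. If $v_1v_2\in\delta$, then $\delta\setminus\{v_1v_2\}$ is an $\wS$-multicut of $\G-v_1v_2$, so its $a_{\varphi}$-weight is at least $\omega$, whence $\a^\T x^\delta=(b-\omega)+\sum_{e\in\delta\setminus\{v_1v_2\}}a_{\varphi(e)}\geq(b-\omega)+\omega=b$. Moreover any $S$-multicut $\gamma$ with $a^\T x^\gamma=b$ (one exists because $a\geq\bnull$) lifts to an $\wS$-multicut $\varphi^{-1}(\gamma)$ of $\G-v_1v_2$ of weight $b$, so $\omega\leq b$ and the coefficient $\a_{v_1v_2}=b-\omega$ is nonnegative, as it must be.

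For the facet count, take $m:=|E|$ affinely independent tight points $y_1,\dots,y_m\in\Mcut(G,S)$, which exist since $a^\T x\geq b$ is a facet of the full-dimensional $\Mcut(G,S)$. Their padded lifts $\x_1,\dots,\x_m$ lie in $\Mcut(\G,\wS)\cap\{x_{v_1v_2}=0\}$ (each multicut in a convex decomposition of $y_i$ lifts to one of $\G$ leaving $v_1v_2$ uncut), stay affinely independent, and satisfy $\a^\T\x_i=b$. To leave this hyperplane, let $\mu$ attain $\omega$ as a minimum $\wS$-multicut of $\G-v_1v_2$ and put $\delta^\ast=\mu\cup\{v_1v_2\}$; then $\delta^\ast$ is an $\wS$-multicut of $\G$ with $x^{\delta^\ast}_{v_1v_2}=1$ and $\a^\T x^{\delta^\ast}=(b-\omega)+\omega=b$, so $\x_1,\dots,\x_m,x^{\delta^\ast}$ are $|\E|$ affinely independent tight points, proving the facet claim. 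For the \nice addendum, the slice identity $\Mcut$-analogue for the polytope gives $\PMcut(\G,\wS)\cap\{x_{v_1v_2}=0\}=\mathrm{lift}(\PMcut(G,S))$ (a convex combination of $0/1$ incidence vectors has $x_{v_1v_2}=0$ only if supported on multicuts leaving $v_1v_2$ uncut, which correspond bijectively to $S$-multicuts of $G$), whence $\dim\PMcut(\G,\wS)=\dim\PMcut(G,S)+1$. If the original facet is \nice, its $\dim\PMcut(G,S)$ affinely independent tight points may be chosen as genuine multicut incidence vectors; their lifts together with $x^{\delta^\ast}$ are $\dim\PMcut(G,S)+1=\dim\PMcut(\G,\wS)$ affinely independent tight incidence vectors, so $\a^\T x\geq b$ is facet-defining for $\PMcut(\G,\wS)$ as well.

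The main obstacle is the case $v_1v_2\notin\delta$ of validity: although $\wS$ only requires \emph{one} of the copies $v_1,v_2$ to be separated from a partner $t$, one must deduce that $v$ and $t$ are separated in the contracted graph, i.e.\ that \emph{both} copies are separated. The resolution is exactly that leaving $v_1v_2$ uncut keeps $v_1,v_2$ connected, propagating any single separation to both; this is also the step that requires each terminal pair to be redistributed to at least one of $\{v_1,t\},\{v_2,t\}$, and it is what both drives the projection-to-$G$ argument and, dually through $\omega$, makes the case $v_1v_2\in\delta$ tight from below while supplying the one extra affinely independent point that completes the facet.
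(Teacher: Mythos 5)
Your proof of the first assertion --- that $\a^\T x\geq b$ is facet-defining for $\Mcut(\G,\wS)$ --- is correct. Validity, the bound $\omega\leq b$, and the extra tight point $x^{\mu\cup\{v_1v_2\}}$ are exactly as in the paper; your one real deviation is that you lift \emph{arbitrary} tight points of the dominant through the embedding induced by $\varphi^{-1}$, whereas the paper lifts tight \emph{multicut incidence vectors}, whose existence in sufficient number it has to arrange by first deleting all edges with $a_e=0$ via \Cref{thm: projection} (iii) and restoring them afterwards via (iv). For this part of the statement your variant is a small but genuine simplification, since it needs no such reduction.

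The \nice addendum, however, has a genuine gap. From the (correct) slice identity $\PMcut(\G,\wS)\cap\{x_{v_1v_2}=0\}=\mathrm{lift}(\PMcut(G,S))$ you conclude ``whence $\dim\PMcut(\G,\wS)=\dim\PMcut(G,S)+1$''. That inference is invalid --- the dimension of a slice does not bound the dimension of a polytope from above --- and the identity itself is false under the hypotheses of the theorem. Take $G$ with $E=\{vt,vu\}$ and $S=\{\{v,t\}\}$: every multicut contains $vt$, so $\PMcut(G,S)=\{1\}\times[0,1]$ has dimension $1$, and $x_{vu}\geq 0$ defines a \nice facet with $v\in\supp(a)$. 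Split $v$ so that $vt$ is assigned to $v_1$ and $vu$ to $v_2$, and replace $\{v,t\}$ by $\{\{v_2,t\}\}$. Then no pair of $\wS$ is an edge of $\G$, so $\PMcut(\G,\wS)$ is full-dimensional of dimension $3\neq \dim\PMcut(G,S)+1$; the jump occurs precisely when a terminal pair of $S$ is an edge of $G$ whose edge goes to one of $v_1,v_2$ while its terminal pair goes to the other. In this instance your construction yields only the two tight incidence vectors $x^{\{v_1t\}}$ and $x^{\{v_1v_2\}}$, while the facet $\{x_{v_2u}=0\}\cap\PMcut(\G,\wS)$ is $2$-dimensional (it also contains $x^{\{v_1v_2,v_1t\}}$), so your certificate falls short and the property of being \nice is not established. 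A correct count must not rely on $\dim\PMcut(\G,\wS)$: this is why the paper instead exhibits $|\E|$ affinely independent \emph{tight incidence vectors}, obtained by first reducing to $\supp(a)=G$ (where the facet of the dominant is bounded by \Cref{thm: boundedness} and therefore contains $|E|$ affinely independent tight multicut incidence vectors), lifting these, and adding $x^{\mu\cup\{v_1v_2\}}$; in that full-support situation $\PMcut(\G,\wS)$ is full-dimensional, so this count does certify a \nice facet.
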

\begin{proof}
	If $a^\mathsf{T}x \geq b$ does not define a \nice facet, utilizing \Cref{thm: projection} (iii) we remove edges $e \in E$ with coefficient $a_e=0$ from $G$ and add them back after splitting by applying \Cref{thm: projection} (iv).
	Since each facet-defining inequality is \nice if its support graph is all $G$, we may
	assume that $a^\mathsf{T}x \geq b$ defines a \nice facet. In this case we prove the \qq{in particular} part.
	
	First, we show $b-\omega \geq 0$.
	Note that there is a one-to-one correspondence between $\wS$-multicuts in $\G$ not containing $v_1v_2$ and $S$-multicut in $G$.
	Thus, since $a^\mathsf{T}x \geq b$ is facet-defining for $\Mcut(G,S)$, there is some $\wS$-multicut $\delta$ in $\G$ with $\sum_{e \in \E\setminus\{v_1v_2\}}a_{\varphi(e)} x^\delta_e=b$.
	Since $\delta$ is also an $\wS$-multicut in $\G-v_1v_2$, this yields $\omega \leq b$. 
	
	Next, we prove validity. Let $\delta$ be an $\wS$-multicut in $\G$.
	If $v_1v_2 \notin \delta$, $\varphi(\delta)$ is a multicut in $G$; thus, $\a^\T x^\delta = a^\T x^{\varphi(\delta)} \geq b$. Otherwise, $\delta\setminus\{v_1v_2\}$ is an $S$-multicut in $\G-v_1v_2$. Hence,
	$$\a^\mathsf{T} x^\delta= (b-\omega)x_{v_1v_2} + \sum_{\substack{e \in \E\\e \neq v_1v_2 }}\a_e x_e \geq (b-\omega) + \omega =b.$$
	It remains to show that $\a^\T x \geq b$ is indeed facet-defining. Since $a^\T x \geq b$ is facet-defining for $\Mcut(G,S)$ there are $S$-multicuts $\delta_1, \dots , \delta_m$ ($m=|E|$) in $G$ such that $x^{\delta_1}, \dots, x^{\delta_m}$ are affinely independent and $a^\T x^{\delta_i}=b$ for each $1 \leq i \leq m$.
	Set $\widebar{\delta}_i= \varphi^{-1}(\delta_i)$.
	Since $\varphi$ is bijective, each $\widebar{\delta}_i$ is well-defined, an $\wS$-multicut in $\G$, and we have $\a^\mathsf{T} x^{\widebar{\delta}_i}=b$.
	Moreover, for an $\wS$-multicut $\delta$ in $\G-v_1v_2$ with $\sum_{e \in \delta}a_e=\omega$ let $\widebar{\delta}=\delta \cup \{v_1v_2\}$; we have $\a^\T x^{\widebar{\delta}}=b$. 
	Since $x^{\widebar{\delta}},x^{\widebar{\delta}_1},\dots,x^{\widebar{\delta}_m}$ are affinely independent $\a^\mathsf{T} x \geq b$ defines a \nice facet. 
\end{proof}

The previous theorem can be utilized to construct new classes of facet-defining inequalities from known facets, as illustrated in the following example.

\begin{ex}\label{ex: splitted_3-claw}
	Consider the graph $G$ shown in \Cref{fig: ex_nodesplit}(a) with terminal pairs $S=\{\{s,t\},\{s,u\},\{t,u\}\}$. 
	We will later see in \Cref{thm: complete-star} that $\sum_{e \in E(G)}x_e \geq 2$ is facet-defining for $\Mcut(G,S)$.
	
	Consider the graph $G_1$ obtained from $G$ by splitting $s$ into $s$ and $s_1$ (cf. \Cref{fig: ex_nodesplit}(b)) and a new set of terminal pairs $S_1=\{\{s_1,t\},\{s,u\},\{t,u\}\}$.
	Then \Cref{thm: NodeSplitting} yields that $\sum_{e \in E(G_1)}x_e \geq 2$ defines a facet of $\Mcut(G_1,S_1)$.
	Now, we obtain $G_2$ from $G_1$ by a splitting of $s$ into $s$ and $s_2$ (cf. \Cref{fig: ex_nodesplit}(c))and setting $S_2=\{\{s_1,t\},\{s_2,u\},\{t,u\}\}$; \Cref{thm: NodeSplitting} yields that $\sum_{e \in E(G_2)}x_e \geq 2$ is facet-defining for $\Mcut(G_2,S_2)$.
	Splitting $t$ and $u$ in the same fashion, we obtain the graph $\widebar G$ shown in \Cref{fig: ex_nodesplit}(d) with 
	$\widebar S=\{\{s_1,t_1\},\{s_2,u_1\},\{t_2,u_2\}\}$.
	By \Cref{thm: NodeSplitting}, $\sum_{e \in E(\widebar G)}x_e \geq 2$ is facet-defining for $\Mcut(\widebar G,\widebar S)$.
\end{ex}
\begin{figure}
	\begin{subfigure}{.5\textwidth}
		\centering
		\begin{tikzpicture}
			\node[circle,draw] (r) at (0,-1) {};
			\node[circle,draw] (s) at (-2,-2) {\footnotesize $s$};
			\node[circle,draw] (t) at ( 0,-2) {\footnotesize $t$};
			\node[circle,draw] (u) at ( 2,-2) {\footnotesize $u$};
			
			\draw (r)--(s);
			\draw (r)--(t);
			\draw (r)--(u);
			
			\draw[blue!50,dashed,thick] (s) edge[out=-40,in=-140] (t);
			\draw[blue!50,dashed,thick] (t)	edge[out=-40,in=-140]	(u);
			\draw[blue!50,dashed,thick] (s)	edge[out=-50,in=-130]	(u);
			
			\useasboundingbox (-3,-3.5)--(3,-3.5)--(3,-.5)--(-3,-.5)--(-3,-3.5);
		\end{tikzpicture}
		\caption{$G$}\label{fig: ex_nodesplita}
	\end{subfigure}
	\begin{subfigure}{.5\textwidth}
		\centering
		\begin{tikzpicture}
			\node[circle,draw] (r) at (0,-1) {};
			\node[circle,draw] (s) at (-2,-2) {\footnotesize $s$};
			\node[circle,draw,inner sep=2.5pt] (s1) at (-2.5,-3) {\footnotesize $s_1$};
			\node[circle,draw] (t) at ( 0,-2) {\footnotesize $t$};
			\node[circle,draw] (u) at ( 2,-2) {\footnotesize $u$};
			
			\draw (r)--(s);
			\draw(s)--(s1);
			\draw (r)--(t);
			\draw (r)--(u);
			
			\draw[blue!50,dashed,thick] (s1) edge[out=-40,in=-90] (t);
			\draw[blue!50,dashed,thick] (t)	edge[out=-40,in=-140]	(u);
			\draw[blue!50,dashed,thick] (s)	edge[out=-50,in=-130]	(u);
			
			\useasboundingbox (-3,-3.5)--(3,-3.5)--(3,-.5)--(-3,-.5)--(-3,-3.5);
		\end{tikzpicture}
		\caption{$G_1$}\label{fig: ex_nodesplitb}
	\end{subfigure}
	
	\begin{subfigure}{.5\textwidth}
		\centering
		\begin{tikzpicture}
			\node[circle,draw] (r) at (0,-1) {};
			\node[circle,draw] (s) at (-2,-2) {\footnotesize $s$};
			\node[circle,draw,inner sep=2.5pt] (s1) at (-2.5,-3) {\footnotesize $s_1$};
			\node[circle,draw,inner sep=2.5pt] (s2) at (-1.5,-3) {\footnotesize $s_2$};
			\node[circle,draw] (t) at ( 0,-2) {\footnotesize $t$};
			\node[circle,draw] (u) at ( 2,-2) {\footnotesize $u$};
			\draw (r)--(s);
			\draw (s)--(s1);
			\draw (s)--(s2);
			\draw (r)--(t);
			\draw (r)--(u);
			
			\draw[blue!50,dashed,thick] (s1) edge[out=-40,in=-90] (t);
			\draw[blue!50,dashed,thick] (t)	edge[out=-40,in=-140]	(u);
			\draw[blue!50,dashed,thick] (s2)	edge[out=-40,in=-90]	(u);
			
			\useasboundingbox (-3,-3.5)--(3,-3.5)--(3,-.5)--(-3,-.5)--(-3,-3.5);
		\end{tikzpicture}
		\caption{$G_2$}\label{fig: ex_nodesplitc}
	\end{subfigure}
	\begin{subfigure}{.5\textwidth}
		\centering
		\begin{tikzpicture}
			\node[circle,draw] (r) at (0,-1) {};
			\node[circle,draw] (s) at (-2,-2) {\footnotesize $s$};
			\node[circle,draw,inner sep=2.5pt] (s1) at (-2.5,-3) {\footnotesize $s_1$};
			\node[circle,draw,inner sep=2.5pt] (s2) at (-1.5,-3) {\footnotesize $s_2$};
			\node[circle,draw] (t) at ( 0,-2) {\footnotesize $t$};
			\node[circle,draw,inner sep=2.5pt] (t1) at (-.5,-3) {\footnotesize $t_1$};
			\node[circle,draw,inner sep=2.5pt] (t2) at ( .5,-3) {\footnotesize $t_2$};
			\node[circle,draw] (u) at ( 2,-2) {\footnotesize $u$};
			\node[circle,draw,inner sep=2.5pt] (u1) at (2.5,-3) {\footnotesize $u_2$};
			\node[circle,draw,inner sep=2.5pt] (u2) at (1.5,-3) {\footnotesize $u_1$};
			\draw (r)--(s);
			\draw (s)--(s1);
			\draw (s)--(s2);
			\draw (r)--(t);
			\draw (r)--(u);
			\draw (t)--(t1);
			\draw (t)--(t2);
			\draw (u)--(u1);
			\draw (u)--(u2);
			
			\draw[blue!50,dashed,thick] (s1) edge[out=-40,in=-140] (t1);
			\draw[blue!50,dashed,thick] (t2)	edge[out=-40,in=-140]	(u1);
			\draw[blue!50,dashed,thick] (s2)	edge[out=-40,in=-140]	(u2);
			
			\useasboundingbox (-3,-3.5)--(3,-3.5)--(3,-.5)--(-3,-.5)--(-3,-3.5);
		\end{tikzpicture}
		\caption{$\widebar G$}\label{fig: ex_nodesplitd}
	\end{subfigure}
	\caption{The graphs from \Cref{ex: splitted_3-claw} in black. Dashed blue connections represent the terminal pairs.}\label{fig: ex_nodesplit}
\end{figure}
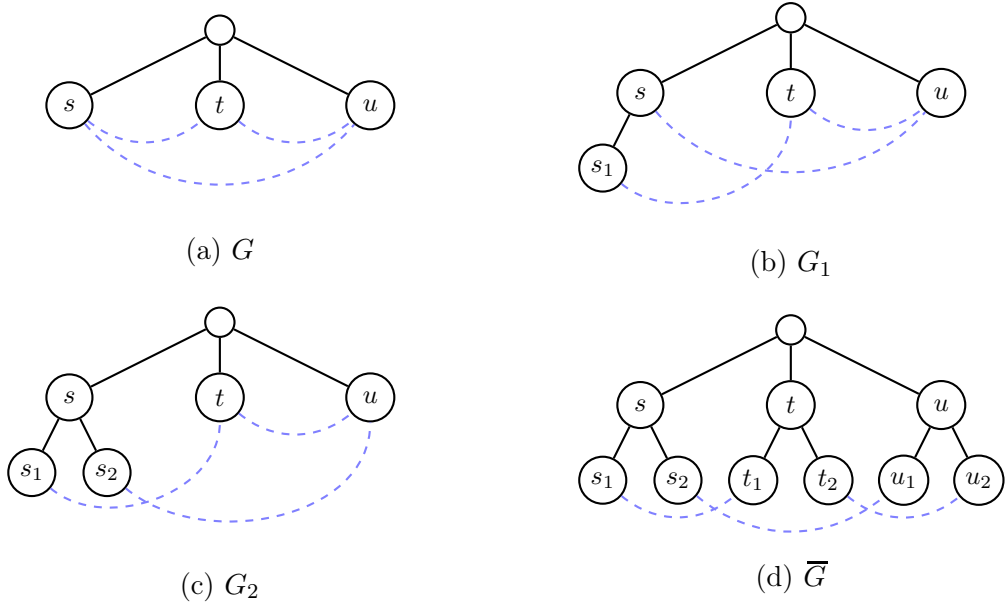

\begin{thm}[Edge subdivision]\label{thm: EdgeSubdivision}
	Let $G=(V,E)$ be a graph, $S \subseteq \binom{V}{2}$, $a^\T x \geq b$ be facet-defining for $\Mcut(G,S)$, and $f \in E$.
	Obtain $\G=(\V,\E)$ from $G$ by subdividing $f$ into $f_1,f_2$. 
	Then,
	\begin{equation}\label{eq: Subdividing_Edge}
		a_f(x_{f_1}+x_{f_2}) + \sum_{e \in E\setminus\{f\}}a_ex_e\geq b
	\end{equation}
	is facet-defining for $\Mcut(\G,S)$.
	
	In particular, if $a^\T x \geq b$ defines a \nice facet of $\Mcut(G,S)$, so does \eqref{eq: Subdividing_Edge} for $\Mcut(\G,S)$.
\end{thm}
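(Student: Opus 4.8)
The plan is to mirror the proof of \Cref{thm: NodeSplitting}: first set up a value-preserving correspondence between multicuts of $\G$ and of $G$, use it to prove validity, and then exhibit $|\E|$ affinely independent incidence vectors on the new face. Write $f=vw$, let $z$ be the new node, so $f_1=vz$, $f_2=zw$, $\V=V\cup\{z\}$, $\E=(E\setminus\{f\})\cup\{f_1,f_2\}$, and denote the left-hand side of \eqref{eq: Subdividing_Edge} by $\a^\T x$. Since $z$ has degree $2$ in $\G$, every path through $z$ uses both $f_1$ and $f_2$; hence the contraction map $\varphi\colon\E\to E$ (the identity off $\{f_1,f_2\}$, with $\varphi(f_1)=\varphi(f_2)=f$) takes $S$-multicuts $\delta$ of $\G$ to $S$-multicuts $\varphi(\delta)$ of $G$: any $s$-$t$-path in $G-\varphi(\delta)$ lifts to one in $\G-\delta$, a contradiction. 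For validity, let $\delta$ be a multicut of $\G$ and put $\delta'=\varphi(\delta)$; then $a^\T x^{\delta'}\geq b$, and since every facet-defining inequality of the multicut dominant has nonnegative coefficients we have $a_f\geq 0$ and $x^{\delta}_{f_1}+x^{\delta}_{f_2}\geq x^{\delta'}_f$, whence $\a^\T x^{\delta}\geq a^\T x^{\delta'}\geq b$.

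For facet-definingness I would reduce, exactly as in \Cref{thm: NodeSplitting}, to the case where $a^\T x\geq b$ is \nice, i.e.\ $\supp(a)=G$. If $a_f>0$, I remove all edges of coefficient $0$ via \Cref{thm: projection}(iii), subdivide $f$ inside the support graph, and add the removed edges back with coefficient $0$ via \Cref{thm: projection}(iv); this recovers $\G$ and the claimed inequality. The leftover case $a_f=0$ (in which \eqref{eq: Subdividing_Edge} reads $\sum_{e\neq f}a_ex_e\geq b$) follows directly from \Cref{thm: projection}: delete $f$ by (iii) and attach $f_1,f_2$ with coefficient $0$ by (iv). So assume $\supp(a)=G$, which forces $a_f>0$, and set $m=|E|$. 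Choosing multicuts $\delta_1,\dots,\delta_m$ of $G$ with $a^\T x^{\delta_i}=b$ and $x^{\delta_1},\dots,x^{\delta_m}$ affinely independent, I lift each to a multicut $\widebar\delta_i$ of $\G$ by $\widebar\delta_i=\delta_i$ when $f\notin\delta_i$ and $\widebar\delta_i=(\delta_i\setminus\{f\})\cup\{f_1\}$ otherwise. Each $\widebar\delta_i$ is a multicut of $\G$ with $\a^\T x^{\widebar\delta_i}=b$, and since this lifting is the restriction of the injective linear map $\R^E\to\R^\E$ carrying the $f$-coordinate to the $f_1$-coordinate (and setting the $f_2$-coordinate to $0$), the vectors $x^{\widebar\delta_1},\dots,x^{\widebar\delta_m}$ remain affinely independent and all lie in $\{x_{f_2}=0\}$.

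It then remains to produce one further multicut that escapes this hyperplane. By \Cref{lem:nonzero_coeff_support} there is a multicut $\delta$ of $G$ with $f\in\delta$ and $a^\T x^{\delta}=b$; setting $\widebar\delta^*=(\delta\setminus\{f\})\cup\{f_2\}$ gives a multicut of $\G$ with $\a^\T x^{\widebar\delta^*}=b$ and $x^{\widebar\delta^*}_{f_2}=1$. As the first $m$ lifted vectors satisfy $x_{f_2}=0$, the point $x^{\widebar\delta^*}$ lies outside their affine hull, so $x^{\widebar\delta_1},\dots,x^{\widebar\delta_m},x^{\widebar\delta^*}$ are $m+1=|\E|$ affinely independent incidence vectors on the face $\{\a^\T x=b\}$; hence \eqref{eq: Subdividing_Edge} is facet-defining for $\Mcut(\G,S)$. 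Because all exhibited points are genuine $0/1$ multicut incidence vectors, the same $|\E|$ points certify that the inequality is facet-defining for $\PMcut(\G,S)$, which yields the \nice statement.

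The main obstacle I anticipate is the affine-independence bookkeeping around the subdivided coordinates, specifically isolating the extra multicut that cuts $f_2$ so as to leave the hyperplane $\{x_{f_2}=0\}$ spanned by the naive lifts. A secondary but genuine point is the degenerate case in which $a^\T x\geq b$ is the edge inequality $\{x_f\geq 0\}$ on a non-terminal edge: there \eqref{eq: Subdividing_Edge} becomes $x_{f_1}+x_{f_2}\geq 0$, the sum of the two facets $x_{f_1}\geq 0$ and $x_{f_2}\geq 0$, and so fails to be facet-defining; this inequality must be excluded for \Cref{lem:nonzero_coeff_support} to apply, whereas the analogous terminal-edge case $\{x_f\geq 1\}$ subdivides into the facet-defining path inequality $x_{f_1}+x_{f_2}\geq 1$ and causes no trouble.
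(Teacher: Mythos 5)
Your proof is correct and essentially identical to the paper's: the same reduction to the \nice case as in \Cref{thm: NodeSplitting}, the same lifting $\delta_i \mapsto \widebar{\delta}_i$ sending $f$ to $f_1$ (which preserves affine independence), and the same use of \Cref{lem:nonzero_coeff_support} to produce one further multicut that cuts $f_2$ and thus escapes the hyperplane $\{x_{f_2}=0\}$, with all certifying points being genuine incidence vectors so that the facet is \nice. You are in fact more careful than the paper in two places: you handle $a_f=0$ separately via \Cref{thm: projection} (the paper's reduction, which deletes all coefficient-$0$ edges before subdividing, would delete $f$ itself in that case), and your flagged degenerate case is a real gap in the theorem as stated --- when $\{a^\T x\geq b\}=\{x_f\geq 0\}$ for a non-terminal edge $f$, inequality \eqref{eq: Subdividing_Edge} reads $x_{f_1}+x_{f_2}\geq 0$, whose face is the intersection of the two distinct facets $\{x_{f_1}= 0\}$ and $\{x_{f_2}= 0\}$ and hence not a facet, a case the paper overlooks because \Cref{lem:nonzero_coeff_support} is inapplicable there.
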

\begin{proof}
	Validity of the inequality is straight-forward to verify.
	Let $m=|E|$.
	As in the proof of \Cref{thm: NodeSplitting} we may assume that $a^\T x\geq b$ defines a \nice facet and prove the \qq{in particular} statement for this case.
	
	Since $a^\T x \geq b$ defines a \nice facet of $\Mcut(G,S)$, there exist multicuts $\delta_1, \dots, \delta_m$ in $G$ such that $x^{\delta_1}, \dots, x^{\delta_m}$ are affinely independent and $a^\T x^{\delta_i} =b$ for all $i \in [m]$.
	Now set for each $i \in [m]$ 
	$$\widebar{\delta_i}=
	\begin{cases}
		\delta_i,								&\text{if } f \notin \delta_i,\\
		\left(\delta_i \setminus\{f\} \right) \cup \{f_1\},	&\text{otherwise.}
	\end{cases}
	$$
	By \Cref{lem:nonzero_coeff_support} we may without loss of generality assume $f \in \delta_1$ and set $\delta= \left(\delta_1 \setminus \{f\} \right)\cup \{f_2\}$. Then $\delta, \widebar{\delta}_1, \dots ,\widebar{\delta}_m$ are multicuts in $\G$. Since $x^{\delta_1}, \dots, x^{\delta_m}$ are affinely independent and $x^\delta_{f_2}=1$ whereas $x^{\widebar{\delta}_1}_{f_1}= \dots= x^{\widebar{\delta}_m}_{f_2}=0$, the vectors $x^\delta,x^{\widebar{\delta}_1}, \dots, x^{\widebar{\delta}_m}$ are affinely independent. Moreover, they all satisfy \eqref{eq: Subdividing_Edge} with equality.
	Hence, inequality \eqref{eq: Subdividing_Edge} defines a \nice facet of $\Mcut(\G,S)$.
\end{proof}
Iteratively applying \Cref{thm: EdgeSubdivision} we obtain the following corollary:

\begin{cor}[Replacing an edge by a path]\label{cor: Replace_edge_by_path}
	Let $G=(V,E)$ be a graph, $S \subseteq \binom{V}{2}$, $a^\T x \geq b$ be facet-defining for $\Mcut(G,S)$, and
	$f \in E$.
	Obtain $\widebar{G}=(\widebar{V},\widebar{E})$ by replacing $f$ by a path $P$.
	Then, 
	\begin{equation}\label{eq: ReplaceEdgeByPath}
		a_f\sum_{e \in E(P)}x_e + \sum_{e \in \E\setminus E(P)}a_ex_e\geq b
	\end{equation}
	is facet-defining for $\Mcut(\widebar{G},S)$.
	
	In particular, if $a^\T x \geq b$ defines a \nice facet of $\Mcut(G,S)$, so does \eqref{eq: ReplaceEdgeByPath} for $\Mcut(\G,S)$.
\end{cor}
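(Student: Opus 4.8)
The plan is to prove \Cref{cor: Replace_edge_by_path} by induction on the number of edges in the path $P$, using \Cref{thm: EdgeSubdivision} as the inductive step. The key observation is that replacing an edge $f$ by a path $P$ with $k$ edges is exactly the result of subdividing $f$ a total of $k-1$ times: each subdivision replaces one edge by two edges joined at a new degree-$2$ node, and iterating this turns a single edge into a path.

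For the base case, when $P$ consists of a single edge, the graph $\G$ equals $G$ (up to renaming $f$), the path-sum $\sum_{e \in E(P)} x_e$ collapses to the single term $x_f$, and inequality \eqref{eq: ReplaceEdgeByPath} is literally $a^\T x \geq b$, which is facet-defining by hypothesis. For the inductive step, suppose the statement holds whenever the replacing path has $k$ edges, and let $P$ have $k+1$ edges. First I would replace $f$ by a path $P'$ with $k$ edges; by the inductive hypothesis the inequality $a_f \sum_{e \in E(P')} x_e + \sum_{e \in \E' \setminus E(P')} a_e x_e \geq b$ is facet-defining for the intermediate graph $\G'$, and moreover remains \nice if $a^\T x \geq b$ was. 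Then I would apply \Cref{thm: EdgeSubdivision} to subdivide one edge $f' \in E(P')$ of this path, which carries coefficient $a_f$; by that theorem the resulting inequality, in which the term $a_f x_{f'}$ is replaced by $a_f(x_{f'_1} + x_{f'_2})$, is facet-defining (and \nice) for the subdivided graph. Since the subdivided path now has $k+1$ edges, and every edge of $P$ still carries the common coefficient $a_f$ while all other coefficients are untouched, the resulting inequality is precisely \eqref{eq: ReplaceEdgeByPath}, completing the induction.

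The bookkeeping that needs care is verifying that the coefficient structure is preserved under iteration: \Cref{thm: EdgeSubdivision} guarantees that when an edge $f'$ with coefficient $a_{f'}$ is subdivided into $f'_1, f'_2$, both new edges inherit the coefficient $a_{f'}$. Since at each stage the edges currently forming the image of the replaced path all share the coefficient $a_f$, subdividing any one of them produces two edges again carrying $a_f$, so by induction all $k+1$ edges of the final path $P$ carry coefficient $a_f$ and the edges outside $P$ retain their original coefficients $a_e$. The \nice-preservation clause likewise propagates through the induction because the \qq{in particular} statement of \Cref{thm: EdgeSubdivision} shows each individual subdivision preserves the \nice property.

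I expect the only real subtlety — rather than a genuine obstacle — to be stating the inductive hypothesis cleanly, since the intermediate graphs $\G', \G'', \dots$ and their edge sets change at each step; phrasing the induction in terms of \qq{the number of edges subdivided so far} keeps the invariant transparent and makes the appeal to \Cref{thm: EdgeSubdivision} immediate at every step.
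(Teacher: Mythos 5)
Your proof is correct and is exactly the paper's approach: the paper derives this corollary by iteratively applying \Cref{thm: EdgeSubdivision}, which is precisely your induction on the path length. Your careful bookkeeping about coefficient inheritance and \nice-preservation is a faithful (and slightly more explicit) spelling-out of what the paper leaves implicit.
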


If we replace an edge $uv \in E(G)$ in a graph $G$ by any connected graph $H$, \Cref{thm: projection} (iv) yields a facet for each facet-defining inequality of $\Mcut(G,S)$ and any $uv$-path in $H$.
Next, we want to study the inverse operation, i.e., the replacement of certain subgraphs by an edge.

\begin{thm}[Replacing a connected graph by an edge]\label{thm: ReplacingSubgraph}
	Let $G=(V,E)$ be a graph and $S \subseteq \binom{V}{2}$. Let $H\subseteq G$ be a connected subgraph of $G$ such that $H$ shares precisely two vertices $s$, $t$ with the rest of $G$, i.e., no edge in $E(G)\setminus E(H)$ is incident to any node in $V(H)\setminus\{s,t\}$.
	Assume there is no terminal in $V(H)\setminus\{s,t\}$.
	Let $a^\mathsf{T}x \geq b$ be facet-defining for $\Mcut(G,S)$ and let $\omega$ be the weight of a minimum $s$-$t$-cut $\delta_{st}$ in $H$ with respect to edge weights given by~$a$.
	Obtain $\G=(\V,\E)$ from $G$ by replacing $H$ by the edge $st$ and define $\a \in \R^{\E}$ by
	$$\a_e=\begin{cases}
		a_e,	&\text{if } e \neq st,\\
		\omega,	&\text{if } e=st.
	\end{cases} $$
	Then, $\a^\mathsf{T}x \geq b$ is facet-defining for $\Mcut(\G,S)$.
\end{thm}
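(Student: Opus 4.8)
The plan is to prove this in two stages: first establish validity of $\a^\mathsf{T}x \geq b$ for $\Mcut(\G,\widebar S)$, and then exhibit $|\E|$ affinely independent tight multicuts to show it is facet-defining. The key structural fact I would rely on is a correspondence between $S$-multicuts in $G$ and $\widebar S$-multicuts in $\G$. Since no terminal lies in $V(H)\setminus\{s,t\}$, cutting inside $H$ only ever matters insofar as it separates $s$ from $t$. Concretely, given an $\widebar S$-multicut $\bar\delta$ in $\G$, I would map it to an $S$-multicut in $G$ by replacing the edge $st$ (if present in $\bar\delta$) with a minimum-weight $s$-$t$-cut $\delta_{st}$ of $H$, and conversely by collapsing any $s$-$t$-separation inside $H$ to the single edge $st$. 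The crucial point is that, because $H$ meets the rest of $G$ only in $s$ and $t$ and contains no internal terminals, whether $s$ and $t$ end up in the same component of $G-\delta$ is the \emph{only} information about $H$ that any terminal pair can see.

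\textbf{Validity.} Let $\bar\delta$ be an $\widebar S$-multicut in $\G$. If $st \notin \bar\delta$, I would argue that $s$ and $t$ lie in the same component of $\G - \bar\delta$, hence $\bar\delta$ (viewed in $G$, where now $H$ reconnects $s$ and $t$ through a path not using $st$) is still an $S$-multicut, and $\a^\mathsf{T}x^{\bar\delta}=a^\mathsf{T}x^{\bar\delta}\geq b$ since the coefficients outside $st$ are unchanged. If $st \in \bar\delta$, I would form $\delta = (\bar\delta \setminus \{st\}) \cup \delta_{st}$ inside $G$; this is an $S$-multicut in $G$ because replacing the $st$-edge by an $s$-$t$-cut of $H$ preserves the separation of $s$ and $t$ and affects no terminal in $V(H)\setminus\{s,t\}$ (there are none). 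Then $a^\mathsf{T}x^{\delta}\geq b$, and since $a^\mathsf{T}x^{\delta_{st}}=\omega=\a_{st}$, substituting back gives $\a^\mathsf{T}x^{\bar\delta}=a^\mathsf{T}x^{\delta}\geq b$. This settles validity in both cases.

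\textbf{Facet-defining.} Since $a^\mathsf{T}x\geq b$ is facet-defining for $\Mcut(G,S)$, there are multicuts $\delta_1,\dots,\delta_m$ ($m=|E|$) in $G$ with $x^{\delta_1},\dots,x^{\delta_m}$ affinely independent and $a^\mathsf{T}x^{\delta_i}=b$. I would partition these according to whether $\delta_i\cap E(H)$ separates $s$ from $t$ in $H$. For those that do \emph{not} separate $s,t$, the restriction $\delta_i\cap (E\setminus E(H))$ (together with $st\notin\bar\delta$) is directly a tight $\widebar S$-multicut in $\G$. For those that do separate $s,t$, I would replace $\delta_i\cap E(H)$ by the single edge $st$, obtaining a tight multicut in $\G$ with $st$ in it; tightness is preserved precisely because a tight $\delta_i$ must induce a minimum $s$-$t$-cut on $H$ (otherwise one could lower the objective, contradicting validity in $G$). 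The remaining task is to recover enough affine independence in the lower-dimensional edge space $\E$: the edges of $\G$ are $(E\setminus E(H))\cup\{st\}$. Here I expect to need one additional tight multicut built from $\delta_{st}\cup(\text{a tight configuration outside }H)$ to fill out the $st$-coordinate, analogous to the lifting argument in \Cref{thm: NodeSplitting}.

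\textbf{The main obstacle} I anticipate is the affine-independence bookkeeping after contracting $H$ to a single edge: the map from $\Mcut(G,S)$ to $\Mcut(\G,\widebar S)$ collapses all of $E(H)$ onto the one coordinate $x_{st}$, so the $m$ affinely independent tight points in $\R^E$ need not project to affinely independent points in $\R^{\E}$. The delicate step is therefore to show that among the tight multicuts of $G$ one can select, after the contraction, a full-dimensional tight family in $\R^{\E}$ — equivalently, that the facet of $\Mcut(G,S)$ is not forced to be constant on the $st$-coordinate of $\G$. I would resolve this exactly as in \Cref{thm: NodeSplitting}: combine the projected tight points coming from $st\notin\bar\delta$ with one extra tight multicut obtained by taking a minimum $s$-$t$-cut $\delta_{st}$ in $H$ together with a tight configuration on $E\setminus E(H)$, so that the $x_{st}=1$ and $x_{st}=0$ levels are both populated and the resulting vectors span $\R^{\E}$ affinely.
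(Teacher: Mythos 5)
Your validity argument coincides with the paper's and is sound. The genuine gap is at the crux of the facet-defining part: you never argue why your chosen family of tight points affinely spans $\R^{\E}$, and the specific recipe you give fails on instances the theorem covers. You propose to use the projections of those tight multicuts $\delta_i$ for which $\delta_i\cap E(H)$ does \emph{not} separate $s$ from $t$ (all lying in $\{x_{st}=0\}$), plus \emph{one} extra tight multicut containing $st$. But the theorem allows $\{s,t\}\in S$, and in that case \emph{every} multicut of $G$ induces an $s$-$t$-cut in $H$, so your level-$0$ family is empty and your construction yields a single tight point, nowhere near the required $|\E|$ many. The analogy with \Cref{thm: NodeSplitting} does not transfer: there the edge correspondence $\varphi$ is a bijection, so the lifted tight vectors are automatically affinely independent and only one extra point is needed; here the passage from $G$ to $\G$ collapses all of $E(H)$ onto the single coordinate $x_{st}$ and can destroy affine independence among the projected points at \emph{both} levels --- which is exactly the difficulty you flag but do not resolve. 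The paper resolves it without selecting points at all: it defines the linear map $\pi\colon\R^E\to\R^{\E}$ with $\pi(x)_e=x_e$ for $e\neq st$ and $\pi(x)_{st}=\frac{1}{\omega}\sum_{e\in E(H)}a_ex_e$, observes that $\pi$ maps the facet $\{a^\mathsf{T}x=b\}\cap\Mcut(G,S)$ into $\{\a^\mathsf{T}x=b\}\cap\Mcut(\G,S)$, and concludes by a kernel-dimension count: since $\dim\ker\pi=|E(H)|-1$, the image has dimension at least $(|E|-1)-(|E(H)|-1)=|\E|-1$. (On tight multicut vectors, $\pi$ produces exactly your two cases, with $st$-coordinate $0$ or $1$; the point is that the dimension bound, not a hand-picked spanning family, finishes the proof.)

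A secondary gap: your starting premise that there exist $|E|$ affinely independent \emph{multicut incidence vectors} tight for $a^\mathsf{T}x\geq b$ is false for unbounded facets, i.e.\ when $\supp(a)\neq G$ (\Cref{thm: boundedness}), since part of the affine hull of such a facet comes from recession directions rather than from multicuts. For example, for the triangle on $\{s,z,t\}$ with $S=\{\{s,t\}\}$, the path inequality $x_{sz}+x_{zt}\geq 1$ is facet-defining and the facet has dimension $2$, but only the two multicuts $\{st,sz\}$ and $\{st,zt\}$ are tight. The paper handles this (and the degenerate case $\omega=0$, where your $\a_{st}$ would vanish and $\pi$ would be undefined) by first reducing to $G=\supp(a)$ via \Cref{thm: projection} (iii) and (iv), and only then running the projection argument. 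With that reduction and the kernel-dimension argument above, your outline becomes the paper's proof.
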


\begin{proof}
	By \Cref{thm: projection} (iii) and (iv), we may remove all edges in $G$ with coefficient $0$, and add such edges in $E(G)\setminus E(H)$ back after the replacement of $H$. Hence, we may assume that $G= \supp(a)$ and $\omega \neq 0$.
	We start by proving validity of the claimed inequality. Let $\delta$ be a multicut in $\G$. If $st \notin \delta$, then $\delta$ is also a multicut in $G$ and thus $\a^\mathsf{T} x^\delta=a^\mathsf{T} x^\delta \geq b$. If $st \in \delta$, the set $(\delta\setminus \{st\}) \cup \delta_{st}$ is an $S$-multicut in $G$ and we have $\a^\mathsf{T}x^\delta=a^\mathsf{T}x^{(\delta\setminus \{st\})} \geq b$.
	
	Now, let $\pi\colon \R^E \to \R^\E$ be the projection given by
	$$\pi(x)_e=\begin{cases}
		x_e,					&\text{if } e \neq st,\\
		\frac{1}{\omega} \sum_{e \in E(H)}x_e,	&\text{if } e=st.
	\end{cases} $$
	Clearly, we have $\a^\T \pi(x^\delta) =b$ for each $S$-multicut $\delta$ in $G$ with $a^\T x^\delta=b$. Thus,
	\begin{align*}
		&\dim(\{\a^\mathsf{T} x =b\}\cap \Mcut(\G,S)) \\
		&\geq \dim(\{a^\mathsf{T} x =b\}\cap\Mcut(G,S)) 	-\left(|E(H)|-1\right) = |\E|.
	\end{align*}
	Moreover, since $a^\T x \geq b$ is facet-defining we have $a \neq \bnull$ and thus, $\a \neq \bnull$.
	Hence, $\a^\mathsf{T} x \geq b$ is facet-defining for $\Mcut(\G,S)$.
\end{proof}
\Cref{cor: Replace_edge_by_path} and \Cref{thm: ReplacingSubgraph} naturally give rise to the following conjecture:
\begin{conj}\label{conj: 2component}
	Let $G=(V,E)$ be a graph, $S \subseteq \binom{V}{2}$, and $\G$ be obtained from $G$ by replacing an edge by a connected graph. Then any facet-defining inequality of $\Mcut(\G,S)$ is either an edge inequality, or
	obtained from facet-defining inequalities of $\Mcut(G,S)$ by applying \Cref{cor: Replace_edge_by_path}.
\end{conj}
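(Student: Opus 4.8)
The plan is to reduce everything to a single structural claim: a facet-defining inequality $\bar a^\T x \ge b$ of $\Mcut(\G,S)$ that is not an edge inequality must be supported, inside the inserted connected graph $H$, on a single $u$-$v$ path with constant coefficient equal to the minimum $u$-$v$ cut value $\omega$ of $H$ (here $f=uv$ is the replaced edge and $u,v$ its two attachment vertices). Granting this, contracting $H$ back to $f$ via \Cref{thm: ReplacingSubgraph} yields a facet-defining inequality $\hat a^\T x\ge b$ of $\Mcut(G,S)$ with $\hat a_f=\omega$, and $\bar a^\T x\ge b$ is then exactly the inequality produced from $\hat a$ by \Cref{cor: Replace_edge_by_path} (replacing $f$ by that path and padding the remaining edges of $H$ with coefficient $0$). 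So the conjecture follows from the structural claim.

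First I would record the cheap facts. Because the internal vertices of $H$ are fresh and carry no terminals, and $H$ meets the rest of $\G$ only in $u,v$, the hypotheses of \Cref{thm: ReplacingSubgraph} hold automatically. Using the theorem that every support edge of a non-edge facet lies on an $s$-$t$-path within the support, every edge of $\supp(\bar a)\cap E(H)$ lies on a $u$-$v$ path through positive-coefficient edges; hence $u$ and $v$ are connected in the support, every $u$-$v$ cut of $H$ meets a positive edge, and $\omega>0$. The fully degenerate case $\supp(\bar a)\cap E(H)=\emptyset$ is dispatched separately: then $\bar a$ lives on $G-f$ and arises from a facet of $\Mcut(G,S)$ with $\hat a_f=0$.

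The heart is a flow decomposition combined with an extremality argument. Reading $\bar a|_{E(H)}$ as edge capacities, a maximum $u$-$v$ flow has value $\omega$ and decomposes into simple $u$-$v$ paths, giving $\bar a|_{E(H)}=\sum_P \mu_P\chi_P+\sum_e \nu_e\chi_e$ with $\mu_P,\nu_e\ge 0$ and $\sum_P\mu_P=\omega$, where $\chi$ are indicator vectors and $\nu_e$ the residual capacities. For each $P$ with $\mu_P>0$, replacing $f$ by $P$ in $\hat a$ and lifting by zeros (\Cref{cor: Replace_edge_by_path} then \Cref{thm: projection} (iv)) gives a facet-defining inequality $I_P$ of $\Mcut(\G,S)$ agreeing with $\bar a$ outside $H$, carrying coefficient $\omega$ on $E(P)$, and having right-hand side $b$. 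A direct check gives the identity of inequalities
$$\bar a^\T x\ge b \;=\; \sum_P \tfrac{\mu_P}{\omega}\,I_P \;+\; \sum_e \nu_e\,(x_e\ge 0),$$
with matching right-hand side $b$ (using $\sum_P\mu_P=\omega$). Since $\bar a^\T x\ge b$ is facet-defining and $\Mcut(\G,S)$ is full-dimensional, every constituent of positive weight defines a face of $\Mcut(\G,S)$ containing $F:=\{\bar a^\T x=b\}\cap\Mcut(\G,S)$. A term $x_e\ge 0$ cannot, since $e\in\supp(\bar a)$ forces a tight multicut using $e$ by \Cref{lem:nonzero_coeff_support}; thus all $\nu_e=0$. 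Each $I_P$ with $\mu_P>0$, being facet-defining, must then define $F$ itself and hence be a positive multiple of $\bar a$, so $E(P)=\supp(\bar a)\cap E(H)$; hence there is only one path $P^\ast$, and comparing coefficients forces $\bar a_e=\omega$ on $E(P^\ast)$, which is the structural claim.

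The step I expect to demand the most care is the interface between the flow decomposition and the facet machinery: one must certify that each single-path inequality $I_P$ is genuinely facet-defining on $\Mcut(\G,S)$ rather than merely valid — this is exactly where \Cref{cor: Replace_edge_by_path} followed by zero-lifting is indispensable — and one must keep the residual-edge terms and the right-hand side bookkeeping consistent so that the combination reproduces $\bar a$ and $b$ on the nose. A secondary nuisance is handling zero-coefficient edges of $\G$ (inside and outside $H$) and the degenerate empty-support case before the flow argument, so that $\omega>0$ is available. Conceptually, the extremality collapse is what turns the phenomenon ``the gadget inequality $\sum_{e\in E(H)}\bar a_e x_e\ge\omega$ is a genuine combination of several path inequalities'' (as happens already when $H$ is a $4$-cycle) into ``then $\bar a$ is not a facet,'' and I do not anticipate an obstruction beyond carrying this out carefully.
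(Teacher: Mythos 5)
You should first be aware of the status of this statement: it is \Cref{conj: 2component}, which the paper explicitly leaves \emph{open} — there is no proof in the paper to compare against, so your argument has to be judged entirely on its own merits. Having traced it step by step, I find the architecture sound, and every ingredient you invoke is indeed licensed by the paper's results. \Cref{thm: ReplacingSubgraph} (applied with the roles of the two graphs swapped relative to its statement) produces the contracted inequality $\hat a^\T x\ge b$ with $\hat a_f=\omega$, facet-defining for $\Mcut(G,S)$, agreeing with $\bar a$ outside $E(H)$. \Cref{cor: Replace_edge_by_path} followed by \Cref{thm: projection}~(iv) makes each single-path inequality $I_P$ facet-defining for $\Mcut(\G,S)$; the zero-lifting is legitimate precisely because the internal nodes of $H$ are new and hence non-terminals, so $S\cap\binom{V(G_P)}{2}=S$. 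Your use of the (unlabeled) theorem in Section~3 that every support edge lies on a terminal path inside the support is also correct: since $H$ meets the rest of $\G$ only in $u,v$ and carries no internal terminals, a simple terminal path through an edge of $H$ must contain a $u$-$v$ subpath inside $\supp(\bar a)\cap H$, which gives $\omega>0$ in the non-degenerate case. The flow-decomposition identity checks out coefficient by coefficient (using $\sum_P\mu_P=\omega$ on the edges outside $H$, and $\bar a_e=\sum_{P\ni e}\mu_P+\nu_e$ inside $H$), the extremality step (tightness of $\bar a$ forces tightness of every positively weighted constituent, $\nu_e>0$ is killed by \Cref{lem:nonzero_coeff_support}, and a facet contained in a facet of the full-dimensional $\Mcut(\G,S)$ equals it) is standard, and the degenerate case $\supp(\bar a)\cap E(H)=\emptyset$ is correctly dispatched via \Cref{thm: projection}~(iii) and (iv).

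Three points still need to be nailed down in a write-up. First, when you conclude that $I_P$, defining the same facet as $\bar a$, is \qq{a positive multiple of $\bar a$}, you need the multiple to be exactly $1$ before you can read off $\bar a_e=\omega$ on $E(P)$; this follows by comparing right-hand sides once you observe $b>0$, which holds because a facet-defining inequality of the dominant with right-hand side $0$ must be an edge inequality: since $\Mcut(\G,S)\subseteq\R^{E(\G)}_{\ge 0}$ and $\bar a\ge 0$, the face $\{\bar a^\T x=0\}\cap\Mcut(\G,S)$ lies in $\bigcap_{e\in\supp(\bar a)}\{x_e=0\}$, forcing $|\supp(\bar a)|=1$, which you excluded. (The identification of supports, $E(P)=\supp(\bar a)\cap E(H)$, and hence the uniqueness of the flow path, is scale-invariant and needs no such care.) Second, your proof inherits the correctness of \Cref{thm: ReplacingSubgraph}; note that the paper's own proof of that theorem contains a glitch — the projection there should send $x_{st}\mapsto\frac{1}{\omega}\sum_{e\in E(H)}a_ex_e$ rather than $\frac{1}{\omega}\sum_{e\in E(H)}x_e$, otherwise tight points need not map to tight points — but this is repairable, so the theorem itself stands. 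Third, you should say explicitly that \qq{obtained by applying \Cref{cor: Replace_edge_by_path}} is meant up to subsequent zero-lifting of the unused edges of $H$, which is the reading the paper itself adopts at the start of Section~4. With these patches I see no obstruction: your argument appears to settle the conjecture affirmatively, which would be a genuine contribution and deserves a careful, self-contained write-up rather than a sketch.
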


%	\begin{thm}[Replacing a path by an edge]
%		Let $G=(V,E)$, $S$ be a clean set of terminals and $a^\T x \geq b$ be facet-defining for $\Mcut(G,S)$ with $\{a^\T x =b\} \neq \{x_e=0\}$ for all $e \in E$. Furthermore, let $P \subseteq G$ be an induced path such that no internal node of $P$ is a terminal and $a_e=M$ for all $e \in P$. Obtain $\G$ from $G$ by replacing $P$ by a newly introduced edge $p$. Set $a\in \R^\E$ by 
%		$$\a_e= \begin{cases}
%			a_e,		&\text{for } e\neq p\\
%			M			&\text{for } e=p.
%		\end{cases} $$
%		Then, $\a^\T x \geq b$ is facet-defining for $\Mcut(\G,S)$.
%	\end{thm}
%	\begin{proof}
%		Consider the projection $\pi \colon \R^E \to \R^\E$, 
%		$$\pi(x)_e=\begin{cases}
%			x_e					&\text{for } e \in E \cap \E,\\
%			\sum_{e \in P}x_e	&\text{for } e = p.
%		\end{cases}$$
%		Clearly we have $\pi(x) \in \Mcut(\G,S)$ for each $x \in \Mcut(G,S)$. 
%		Thus, the inequality is valid for $\Mcut(\G,S)$.
%		
%		By \Cref{cor:coeff_on_path} we have $\a^\T \pi(x^\delta) =b$ for each $S$-multicut $\delta$ in $G$ with $a^\T x^\delta=b$. Thus,
%		\begin{align*}
%			&\dim(\{\a^\mathsf{T} x =b\}\cap \Mcut(\G,S)) \\
%			\geq &\dim(\{a^\mathsf{T} x =b\}\cap\Mcut(G,S)) 	-\left(|P|-1\right) = |\E|.
%		\end{align*}
%		Moreover, since $a^\T x \geq b$ is facet-defining and not an edge inequality we have $a \neq \bnull$  and thus, $\a \neq \bnull$.
%		Hence, $\a^\mathsf{T} x \geq b$ is facet-defining for $\Mcut(\G,S)$.
%	\end{proof}

\section{Star Inequalities}\label{sec:star-ineq}
	
	In this section we investigate the multicut dominant of $K_{1,n}$ with respect to specific sets of terminal pairs. For $K_{1,3}$ with leaves $W$ and $S=\binom{W}{2}$, it is known that edge- and path inequalities do not suffice to give a complete description of $\Mcut(K_{1,3},S)$ \cite[Section 1]{NP-hard_trees}.
	Since $S$ can be viewed as a triangle on the leaves, there are two natural generalizations of this instance to $K_{1,n}$: The terminal pairs may form a cycle on the leaves or the terminal pairs may form a complete graph on the leaves.
	We discuss facets based on both generalizations.
	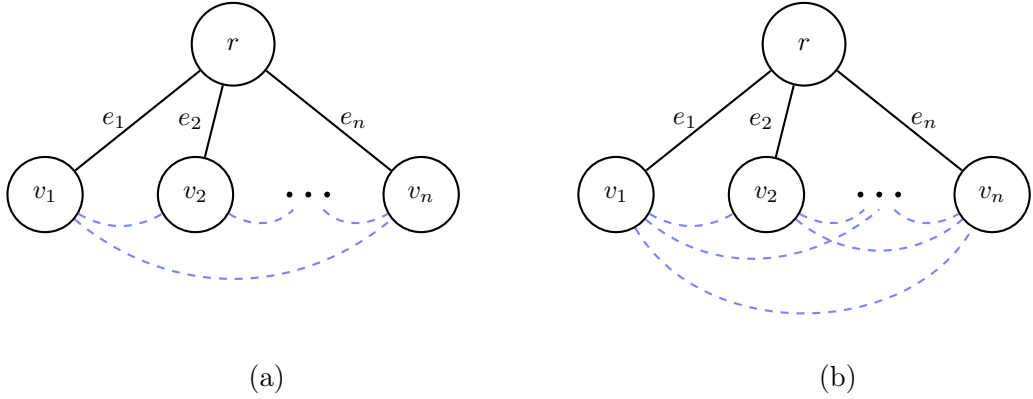
\begin{figure}
		\begin{subfigure}{.5\textwidth}
			\begin{tikzpicture}
				\node[circle,draw,minimum size=1.1cm] (r) at (0,0) {\footnotesize $r$};
				
				\node[circle,draw,minimum size=1cm] (v1) at (-2.5,-2) {\footnotesize $v_1$};
				\node[circle,draw,minimum size=1cm] (v2) at (-.5,-2) {\footnotesize $v_2$};
				\node[circle,draw,minimum size=1cm] (v3) at (2.5,-2) {\footnotesize $v_n$};
				\draw (r)--node[above,left]{\footnotesize{$e_1$}}(v1);
				\draw (r)--node[above,left]{\footnotesize{$e_2$}}(v2);
				\draw (r)--node[above,right]{\footnotesize{$e_n$}}(v3);
				
				\draw[blue!50,dashed,thick] (v1) edge[out=-30,in=-150] (v2);
				\draw[blue!50,dashed,thick] (v2)	edge[out=-30,in=-135]	(.8,-2.2);
				\draw[blue!50,dashed,thick] (1.2,-2.2)	edge[out=-45,in=-150]	(v3);
				\draw[blue!50,dashed,thick] (v1)	edge[out=-40,in=-140]	(v3);
				
				\node[circle,fill,scale=.2] (d1) at (.75,-2) {};
				\node[circle,fill,scale=.2] (d2) at (1,-2) {};
				\node[circle,fill,scale=.2] (d3) at (1.25,-2) {};
				
				\useasboundingbox (-3.3,-4)--(-3.3,1)--(3.3,1)--(3.3,-4)--(-3.3,-4);
			\end{tikzpicture}\caption{}
		\end{subfigure}
		\begin{subfigure}{.5\textwidth}
			\begin{tikzpicture}
				\node[circle,draw,minimum size=1.1cm] (r) at (0,0) {\footnotesize $r$};
				
				\node[circle,draw,minimum size=1cm] (v1) at (-2.5,-2) {\footnotesize $v_1$};
				\node[circle,draw,minimum size=1cm] (v2) at (-.5,-2) {\footnotesize $v_2$};
				\node[circle,draw,minimum size=1cm] (v3) at (2.5,-2) {\footnotesize $v_n$};
				\draw (r)--node[above,left]{\footnotesize{$e_1$}}(v1);
				\draw (r)--node[above,left]{\footnotesize{$e_2$}}(v2);
				\draw (r)--node[above,right]{\footnotesize{$e_n$}}(v3);
				
				\node[circle,fill,scale=.2] (d1) at (.75,-2) {};
				\node[circle,fill,scale=.2] (d2) at (1,-2) {};
				\node[circle,fill,scale=.2] (d3) at (1.25,-2) {};
				
				\draw[blue!50,dashed,thick] (v1) edge[out=-30,in=-150] (v2);
				\draw[blue!50,dashed,thick] (v2)	edge[out=-30,in=-135]	(.8,-2.2);
				\draw[blue!50,dashed,thick] (1.2,-2.2)	edge[out=-45,in=-150]	(v3);
				\draw[blue!50,dashed,thick] (v2)	edge[out=-40,in=-140]	(v3);
				\draw[blue!50,dashed,thick] (v1)	edge[out=-40,in=-140]	(1,-2.2);
				\draw[blue!50,dashed,thick] (v1)	edge[out=-60,in=-120]	(v3);
				
				\useasboundingbox (-3.3,-4)--(-3.3,1)--(3.3,1)--(3.3,-4)--(-3.3,-4);
			\end{tikzpicture}\caption{}
		\end{subfigure}
		\caption{The graphs from Theorems \ref{thm: circular_star_facet} and \ref{thm: complete-star} in black. Dashed blue connections represent the terminal pairs.}\label{fig: star_inequalities}
	\end{figure}

	Throughout this section we consider a star $K_{1,n}{=}(\{r,v_0,\dots,v_{n-1}\},\{0 ,\dots, n{-}1\})$ with $i=rv_i$ for $0 \leq i < n$.
	\begin{thm}\label{thm: circular_star_facet}
		Let $n \geq 3$ be odd, consider $K_{1,n}$, and let $S=\{\{v_i,v_{(i+1) \bmod n}\}:0 \leq i <n\}$ (cf. \Cref{fig: star_inequalities}(a)). 
		Then, the \emph{circular $n$-star inequality}
		$\sum_{e \in E(K_{1,n})}x_e \geq \lceil \frac{n}{2} \rceil$ defines a \nice facet of $\Mcut(K_{1,n},S)$.
	\end{thm}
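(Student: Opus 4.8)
The plan is to first recast multicuts in this instance as a purely combinatorial object. Since $K_{1,n}$ is a star, the unique path joining two leaves $v_i,v_j$ runs through the center $r$ and uses exactly the two edges $rv_i$ and $rv_j$. Hence an edge set $\delta$ is an $S$-multicut if and only if for every $i$ at least one of $rv_i,rv_{(i+1)\bmod n}$ lies in $\delta$. Identifying each star edge $rv_i$ with a vertex and each terminal pair $\{v_i,v_{(i+1)\bmod n}\}$ with an edge, this says precisely that multicuts correspond bijectively to vertex covers of the cycle $C_n$. As the minimum vertex cover of an odd cycle $C_n$ has size $\lceil n/2\rceil$, the inequality $\sum_e x_e\ge\lceil n/2\rceil$ is valid and tight, which settles validity.

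Next I would reduce the facet claim to an affine-independence statement. Because the support graph of the inequality is all of $K_{1,n}$, \Cref{thm: boundedness} tells us the induced face is bounded, and a facet-defining inequality with full support is automatically \nice; so it suffices to prove that the face has dimension $n-1$, i.e.\ to exhibit $n$ affinely independent incidence vectors of multicuts attaining $\sum_e x_e=\lceil n/2\rceil$. Such multicuts are exactly the minimum vertex covers of $C_n$, equivalently the complements of the maximum independent sets of $C_n$.

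For the construction I would use the structure of maximum independent sets of an odd cycle: an independent set of size $(n-1)/2$ is obtained by alternating chosen/unchosen vertices around the cycle with exactly one ``double gap'' of two consecutive unchosen vertices. Placing this double gap at $\{j,j+1\}$ for $j=0,\dots,n-1$ produces $n$ distinct minimum vertex covers $V_0,\dots,V_{n-1}$. The complementation $x\mapsto\mathbf{1}-x$ is an affine bijection of $\R^{E(K_{1,n})}$, so affine independence of the vectors $x^{V_j}$ is equivalent to affine independence of the incidence vectors of the corresponding independent sets. The matrix $A$ whose rows are these independent-set vectors is circulant: its $j$-th row is the indicator of the offsets $\{2,4,\dots,n-1\}$ from $j$. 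Since linear independence of the rows already implies affine independence, it remains only to show that $A$ is nonsingular.

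This nonsingularity is the crux and the main obstacle. I would diagonalize the circulant using the $n$-th roots of unity $\omega=e^{2\pi i/n}$: its eigenvalues are $c(\omega^j)$ for $j=0,\dots,n-1$, where $c(x)=\sum_{m=1}^{(n-1)/2}x^{2m}$. For $j=0$ this equals $(n-1)/2\neq0$. For $j\neq0$, set $\zeta=\omega^{2j}$, which is a \emph{nontrivial} $n$-th root of unity precisely because $n$ is odd (so $2j\not\equiv0\pmod n$); summing the geometric series gives $c(\omega^j)=\zeta\frac{\zeta^{(n-1)/2}-1}{\zeta-1}$, and since $\zeta^{(n-1)/2}=\omega^{j(n-1)}=\omega^{-j}\neq1$, this value is nonzero. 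Thus all eigenvalues are nonzero, $A$ is nonsingular, and the $n$ minimum vertex covers are affinely independent, which completes the facet argument and hence, via \Cref{thm: boundedness}, the \nice claim. The hypothesis that $n$ is odd enters in exactly two essential places — that the minimum vertex cover of $C_n$ equals $\lceil n/2\rceil$, and that $\omega^{2j}$ still ranges over nontrivial roots of unity — which is why the statement restricts to odd $n$.
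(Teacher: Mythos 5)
Your proof is correct, but it settles the decisive step by a genuinely different argument than the paper. Both proofs use the same family of tight points, namely the $n$ rotations of the ``every other star edge'' pattern (equivalently, the minimum vertex covers of the odd cycle $C_n$ formed by the terminal pairs), and both deduce the \nice property from the inequality having full support. The paper, however, proves affine independence of these $n$ incidence vectors by induction on odd $n$: the orthogonal projection forgetting the coordinates $x_0,x_1$ sends the tight vectors $x^{\delta^n_k}$ with $k\geq 2$ to the corresponding tight vectors for $n-2$, and a short coefficient-comparison argument then forces the two remaining affine multipliers to vanish. You instead pass to complements (an invertible affine map), observe that the resulting $0/1$ matrix is circulant with symbol $c(x)=\sum_{m=1}^{(n-1)/2}x^{2m}$, and verify that every eigenvalue $c(\omega^j)$ is nonzero because $\omega^{2j}$ ranges over nontrivial $n$-th roots of unity exactly when $n$ is odd. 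Both are sound; your route is non-inductive, makes the role of oddness completely explicit in the eigenvalue computation, and is in fact the same circulant/roots-of-unity technique the paper itself deploys later in the proof of \Cref{thm: WagnerIneq}, whereas the paper's induction stays entirely elementary and avoids complex numbers. Your vertex-cover reformulation also makes the validity claim precise, where the paper merely calls it straightforward.
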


	\begin{proof}
		It is straight-forward to verify validity of the inequality. We now prove that it is indeed facet-defining.
		For $0 \leq k <n$ set ${\delta^n_k =\{(k+2i)\bmod n: 0 \leq i \leq \lfloor \frac{n}{2}\rfloor\}}$.
		Then, the $\delta^n_k$ are pairwise distinct multicuts in $K_{1,n}$ as each such set contains precisely two consecutive (but different) edges of $K_{1,n}$.
		Clearly, it holds $\sum_{j=1}^{n-1}x^{\delta^n_k}_j = \lceil \frac{n}{2} \rceil$. 
		We show by induction that $x^{\delta^n_0}, \dots, x^{\delta^n_{n-1}}$ are affinely independent.
		Clearly, this is true for $n=3$.
		
		For $2 \leq i <n$ we have $\pi (x^{\delta^n_k})= x^{\delta^{n-2}_{k-2}}$ where $\pi \colon\R^n \to \R^{n-2}$ is the orthogonal projection $(x_0, \dots, x_{n-1}) \mapsto (x_2, \dots, x_{n-1})$. 
		Thus, by induction we know that $x^{\delta^n_2}, \dots, x^{\delta^n_{n-1}}$ are affinely independent.
		Let $\lambda_0, \dots, \lambda_{n-1} \in \R_{\geq 0}$ with  $\sum_{k=0}^{n-1} \lambda_k=0$ and $0= \sum_{k=0}^{n-1} \lambda_k x^{\delta^n_k}$.
		Comparing the coefficients of the first three entries of this affine combination, we obtain
		\begin{alignat*}{2}
			\lambda_0 + &\sum_{k=1}^{\mathclap{\lfloor n/2\rfloor}} \lambda_{2k-1}	&&=0 \\
			\lambda_1 + &\sum_{k=1}^{\mathclap{\lfloor n/2\rfloor}} \lambda_{2k\phantom{+1}}	&&=0 \\
			\lambda_0 + \lambda_2 + &\sum_{k=2}^{\mathclap{\lfloor n/2\rfloor}} \lambda_{2k-1}	&&=0 
		\end{alignat*}
		Subtracting the first from the third inequality we obtain $\lambda_2- \lambda_1 =0$ and summing the first and second inequality we obtain 
		$0=\lambda_1 + \sum_{i=0}^{n-1} \lambda_i=\lambda _1$. Thus, $\lambda_1= \lambda_2=0$. Since $x^{\delta^n_3}, \dots, x^{\delta^n_n}$ are affinely independent, we conclude $\lambda_1=\dots=\lambda_n=0$.
		
		The defined facet is \nice since the inequality is minimally supported on $K_{1,n}$.
	\end{proof}

	Note that considering even $n$ in the scenario of the previous theorem, the corresponding inequality $\sum_{e \in E(K_{1,n})}x_e \geq \frac{n}{2}$ would be dominated by path-inequalities and thus not be facet-defining.
	However, the inequalities from the previous theorem together with edge- and path inequalities suffice to completely describe the multicut dominant for arbitrary $n$.

%	\begin{prop}\cite{HellerTompkins,FulkersonGross}\label{prop:totally-unimodular_conditions}\todo{Martina: \qq{genaue Referenzen} Kriterien in Beweis aufführen}
%		Let $A$ be an $(m \times n)$-matrix such that every entry of $A$ equals $-1$, $0$, or $1$. Then $A$ is totally unimodular if one of the following conditions holds:
%		\begin{enumerate}[(i)]
%			\item The columns of $A$ can be partitioned into two disjoint sets $B$ and $C$ such that
%				\begin{itemize}
%					\item Every row of $A$ contains at most two non-zero entries,
%					\item if two non-zero entries in the same row have the same sign, the column of one is in $B$ and the other in $C$,
%					\item If two non-zero entries in the same row have opposite sign, either the columns of both are in $B$ or the columns of both are in $C$.
%				\end{itemize}
%			\item $A$ can be permuted into a $0$-$1$-matrix in which for every row the $1$s appear consecutively.
%		\end{enumerate}
%	\end{prop}
	
	\begin{cor}\label{thm: complete_description_circular-claw}
		Let $n \geq 3$, consider $K_{1,n}$, and let $S=\{\{v_i,v_{(i+1) \bmod n}\}:0 \leq i <n\}$. 
		Then the  $\Mcut(K_{1,n},S)$ is completely described by the inequalities
		\begin{alignat*}{2}
							x_e & \geq 0  \hspace{1cm} 	&&\text{for all } e \in E,\\
			x_i+x_{(i+1)\bmod n} &\geq 1 &&\text{for } 0 \leq i <n,\\
			\sum_{e \in E}x_e	&\geq \left\lceil \frac{n}{2}\right\rceil ,&& 
			\end{alignat*}
		where the last inequality can be omitted if and only if $n$ is odd.
	\end{cor}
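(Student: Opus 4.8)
The plan is to establish equality of the two polyhedra by showing that the listed inequalities are valid (the easy inclusion) and that the polyhedron $\mathcal P$ they cut out has only integral vertices (the hard inclusion). It is convenient to record first that separating $v_i$ from $v_{(i+1)\bmod n}$ in $K_{1,n}$ means deleting edge $i$ or edge $(i+1)\bmod n$; hence the multicuts are exactly the vertex covers of the cycle $C_n$ on the index set $\{0,\dots,n-1\}$, the path inequalities $x_i+x_{(i+1)\bmod n}\ge 1$ are its edge (cover) constraints, and $\sum_e x_e\ge\lceil n/2\rceil$ is its odd-cycle inequality. Validity of all three families is immediate (the sum inequality is \Cref{thm: circular_star_facet} for odd $n$ and, as already noted, dominated by the path inequalities otherwise), so $\Mcut(K_{1,n},S)\subseteq\mathcal P$. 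Both polyhedra have recession cone $\R^n_{\ge 0}$, and any integral point of $\mathcal P$ lies in $\Mcut(K_{1,n},S)$, since it dominates the incidence vector of its support, which is a vertex cover of $C_n$, i.e., a multicut. As $\mathcal P$ is pointed, it therefore suffices to prove that every vertex of $\mathcal P$ is integral.

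For the core step I would argue by contradiction: let $x^\ast$ be a vertex of $\mathcal P$ and let $F=\{i:x^\ast_i\notin\Z\}\neq\emptyset$ be its set of fractional coordinates. Being a vertex, $x^\ast$ is pinned down by $n$ linearly independent tight constraints, so their restriction to the coordinates in $F$ must have rank $|F|$. Two elementary observations cut down the possibilities. First, the nonnegativity constraints on $F$ are not tight, so the only tight constraints varying in the $F$-directions are edge constraints touching $F$ and the sum constraint. Second, no tight edge constraint can straddle $F$ and its complement: if $x^\ast_i+x^\ast_{i+1}=1$ with $i\in F$ and $i+1\notin F$, then $x^\ast_i=1-x^\ast_{i+1}\in\Z$, a contradiction. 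Hence the tight edge constraints touching $F$ lie inside the induced subgraph $C_n[F]$, a disjoint union of $q$ arcs (or the whole cycle), which contributes at most $|F|-q$ independent rows; together with the single sum row this gives rank at most $|F|-q+1$ on the $F$-coordinates. Demanding rank $|F|$ forces $q\le 1$, so $F$ is a single (possibly cyclic) arc on which all internal edge constraints are tight, and the sum constraint must supply the final unit of rank.

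It remains to rule out these configurations, which is the technical heart. Along such an arc the tight edge equalities force the alternating pattern $x^\ast_j,\,1-x^\ast_j,\,x^\ast_j,\dots$, leaving one free parameter $a=x^\ast_j\notin\Z$. If the arc is the entire cycle with all $n$ edges tight, then $a=\tfrac12$ is forced and $x^\ast$ is the all-$\tfrac12$ point; this can only happen for odd $n$ (for even $n$ the circulant edge matrix is singular, so this is no vertex at all), and for odd $n$ it violates $\sum_e x_e\ge\lceil n/2\rceil$ and so is not in $\mathcal P$. Otherwise the arc has length $\ell$, and summing the alternating values gives $\sum_{i\in F}x^\ast_i=\ell/2$ when $\ell$ is even (independent of $a$, so the sum row is dependent and $x^\ast$ is no vertex) and $\sum_{i\in F}x^\ast_i=\tfrac{\ell-1}{2}+a$ when $\ell$ is odd. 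In the odd case the sum constraint reads $\tfrac{\ell-1}{2}+a+\sum_{i\notin F}x^\ast_i=\lceil n/2\rceil$ with every other term integral, forcing $a\in\Z$ and contradicting $a\notin\Z$. Thus no fractional vertex exists, every vertex of $\mathcal P$ is integral, and $\mathcal P=\Mcut(K_{1,n},S)$. Finally, summing the $n$ path inequalities yields $2\sum_e x_e\ge n$, so the sum inequality is redundant, and hence omissible, exactly when $n$ is even, while for odd $n$ it is facet-defining by \Cref{thm: circular_star_facet} and therefore indispensable.

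The main obstacle is the rank bookkeeping of the middle and last paragraphs: proving that a block of fractional coordinates can never be rigidified by the available tight constraints except in the all-$\tfrac12$ pattern that the odd-cycle inequality excludes. This is exactly the assertion that a single odd-cycle inequality integralizes the vertex-cover relaxation of a cycle, and making the counting airtight — the arc-versus-cycle dichotomy, the dependency of the sum row, and the parity of the arc length — is where the real work lies.
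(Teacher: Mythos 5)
Your proof is correct, but it takes a genuinely different route from the paper's. The paper also reduces the claim to showing that every vertex of $\mathcal P$ is integral, but it does so via total unimodularity: for even $n$ it invokes the Heller--Tompkins characterization (partitioning the columns of the constraint matrix by parity of the edge index), and for odd $n$ it first argues --- exactly as in your whole-cycle case --- that the all-$\tfrac12$ point is the only point at which every $2$-path inequality is tight and that this point is cut off by the sum inequality, so at any vertex some $2$-path row is slack; deleting that row leaves a matrix with the consecutive-ones property, which is totally unimodular by Fulkerson--Gross. Your argument replaces this machinery with a bare-hands analysis of a hypothetical fractional vertex: localizing the fractional coordinates to a single arc via the rank count, exploiting the alternating values along the arc, and finishing with the parity of the arc length against the sum constraint. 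What your route buys is self-containedness and robustness: you never need the constraint matrix itself to be well-structured, which matters because the paper's even-$n$ argument, as written, applies Heller--Tompkins to a matrix that includes the sum row with its $n$ nonzero entries, violating that characterization's at-most-two-nonzeros-per-row hypothesis (this is repairable --- for even $n$ the sum row is redundant and can be dropped first --- but your proof sidesteps the issue entirely). What the paper's route buys is brevity, at the cost of leaning on two external TU criteria.

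One point of divergence from the statement itself: your final paragraph proves that the sum inequality is omissible exactly when $n$ is \emph{even}, and indispensable (being facet-defining by \Cref{thm: circular_star_facet}) when $n$ is odd. This contradicts the corollary's literal wording (``omitted if and only if $n$ is odd''), but it agrees with the paper's own discussion immediately preceding the corollary, which notes that for even $n$ the sum inequality is dominated by the path inequalities. The wording in the statement is a typo, and your version is the correct one.
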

	\begin{proof}
		A matrix $A$ is \emph{totally unimodular} if each square submatrix has determinant $-1$, $0$, or $1$. 
		It is well known that if $A$ is totally unimodular, and $b \in \Z^m$, all vertices of the polyhedron $\{x \in \R^d: Ax \geq b\}$ are integral.
		
		Let $\mathcal P=\{Ax \geq b\}$ be the polyhedron defined by the claimed inequalities.
		In the following we prove that each vertex of $\mathcal P$ is integral, which yields $\mathcal P=\Mcut(G,S)$.
		
		First assume $n$ is even. We show that $A$ is totally unimodular by using a characterization due to \cite[Appendix]{HellerTompkins}:
		Let $A$ be an $(m \times n)$-matrix such that every entry of $A$ equals $-1$, $0$, or $1$. Then $A$ is totally unimodular if the columns of $A$ can be partitioned into two disjoint sets $B$ and $C$ such that
			\begin{itemize}
				\item every row of $A$ contains at most two non-zero entries;
				\item if two non-zero entries in the same row have the same sign, the column of one is in $B$ and the other in $C$; and
				\item if two non-zero entries in the same row have opposite signs, either both columns are in $B$ or both columns are in $C$.
			\end{itemize}
		By partitioning the columns of $A$ based on the parity of their index, the previous characterization yields that $A$ is totally unimodular. Thus, all vertices of $\mathcal P$ are integral.
		
		Now, assume $n$ is odd. Let $v \in \mathcal P$ be a vertex.
		We call the above path inequalities \emph{$2$-path inequalities} due to the length of their support.
		Note that the only point in $\mathbb R^E$ satisfying all $2$-path inequalities with equality is $(\frac 1 2, \dots, \frac 1 2)$, which is not contained in $\mathcal P$.
		Thus, there is a $2$-path inequality that is not satisfied by $v$ with equality.
		By symmetry we may assume that this $2$-path inequality is $x_n+x_1 \geq 1$.
		Let $A'$ and $b'$ be obtained from $A$ and $b$, respectively, by deleting the row corresponding to this inequality.
		Then, $v$ is a vertex of the polyhedron $\{A'x \geq b'\}$.
		We prove $v \in \Z^{E(K_{1,n})}$ by showing that $A'$ is totally unimodular.
		To this end, we use a characterization from \cite[Section 8]{FulkersonGross}:
		Let $A$ be an $(m \times n)$-matrix such that every entry of $A$ equals $0$ or $1$. 
		Then $A$ is totally unimodular if its colums can be permuted such that for every row the $1$s appear consecutively.
		
		$A'$ is (up to permutation of rows) of the form
		$$\begin{pmatrix}´
			1		&	1	&	0	&	 \cdots	&	0	\\
			0		&	1	&	1	& 	\ddots	& \vdots\\
			\vdots	&\ddots	& \ddots&	\ddots	& 	0	\\
			0 		&\cdots	&	0	&	1 		&	1	\\
			1		&	0	&\cdots	&	\cdots	&	0	\\
			0		&	1	&\ddots	&			&\vdots	\\
			\vdots	&\ddots	&\ddots	&´	\ddots	&\vdots	\\
			\vdots	&		&\ddots	&´\ddots	&	0	\\
			0		&\cdots	&\cdots	&	0		&	1	\\
			1		&	1	&\cdots	&	1		&	1
		\end{pmatrix} $$
		and thus, totally unimodular.
	\end{proof}
	
	We now turn our attention to the second mentioned class of instances. cf. \Cref{fig: star_inequalities}(b).
	
	\begin{thm}\label{thm: complete-star}
		Let $n \in \N$, consider $K_{1,n}$ and let $S=\{\{v_i,v_j\}:0 \leq i < j <n\}$. 
		Then, the \emph{complete $n$-star inequality}
		$\sum_{e \in E(K_{1,n})}x_e \geq n-1$ defines a \nice facet of $\Mcut(K_{1,n},S)$.
	\end{thm}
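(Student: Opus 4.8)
The plan is to prove validity by a direct combinatorial observation and then to exhibit $n=|E(K_{1,n})|$ affinely independent incidence vectors of multicuts satisfying the inequality with equality. Since $\dim \Mcut(K_{1,n},S)=|E|=n$, producing $n$ affinely independent tight incidence vectors is exactly what is needed to conclude that the face $\{\sum_e x_e = n-1\}\cap\Mcut(K_{1,n},S)$ is a facet, and the \nice property will then follow at once.

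For validity, note that the unique $v_i$-$v_j$-path in $K_{1,n}$ is $v_i\,r\,v_j$, which consists precisely of the edges $i$ and $j$. Hence any $S$-multicut $\delta$ must contain $i$ or $j$ for every pair $i\neq j$, so at most one edge may be absent from $\delta$ and therefore $|\delta|\geq n-1$. As $\Mcut(K_{1,n},S)=\PMcut(K_{1,n},S)+\R^{E}_{\geq 0}$ and the coefficient vector is nonnegative, this yields $\sum_e x_e \geq n-1$ on the whole dominant.

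For the facet property I would use, for each $0\leq k<n$, the multicut $\delta_k=E(K_{1,n})\setminus\{k\}$: in $K_{1,n}-\delta_k$ only the edge $k=rv_k$ remains, so every other leaf is isolated and $\delta_k$ is a multicut with $|\delta_k|=n-1$, hence $x^{\delta_k}$ lies on the face. Writing $u_k$ for the $k$-th standard unit vector, we have $x^{\delta_k}=\mathbf 1-u_k$, so the $n-1$ difference vectors $x^{\delta_0}-x^{\delta_k}=u_k-u_0$ (for $1\leq k<n$) are linearly independent; thus $x^{\delta_0},\dots,x^{\delta_{n-1}}$ are affinely independent. This yields a face of dimension at least $n-1$, which forces it to be a facet.

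Finally, since every coefficient equals $1$ the support graph is all of $K_{1,n}$, so the facet is \nice by the remark following \Cref{thm: boundedness}. I expect no real obstacle here: in contrast to \Cref{thm: circular_star_facet}, the complete terminal set is so rigid that the minimal multicuts are exactly the $n$ single-edge complements, whose incidence vectors $\mathbf 1-u_k$ are transparently affinely independent; the only place demanding a little care is the clean justification that a multicut can retain at most one edge.
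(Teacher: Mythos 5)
Your proposal is correct and follows essentially the same route as the paper: both use the multicuts $E(K_{1,n})\setminus\{k\}$ as the $n$ affinely independent tight points and conclude the \nice property from the inequality being fully supported on $K_{1,n}$. Your write-up merely spells out the validity argument and the affine-independence check (via the difference vectors $u_k-u_0$) that the paper leaves as straightforward.
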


	\begin{proof}
		It is straight-forward to verify that the inequality is valid for $\Mcut(G,S)$. Moreover, the vertices of $\Mcut(K_{1,n},S)$ are $x^{E \setminus \{i\}}$ for all $0\leq i <n$. These are affinely independent. Since all of these vectors satisfy $\sum_{i=0}^{n-1}x_i = n-1$, the inequality is facet-defining.
		
		The defined facet is \nice since the inequality is minimally supported on $K_{1,n}$. 
	\end{proof}
	Together with \Cref{thm: projection} (iv) this theorem gives a facet-defining inequality for $\Mcut(K_{1,n},S)$ with $S=\{\{s,t\}: s,t \text{ are leafs}\}$ corresponding to each $K_{1,k} \subseteq K_{1,n}$ with the induced sets of terminals.
	Thus, the previous theorem gives a large number of facet-defining inequalities for $\Mcut(K_{1,n},S)$.
	Motivated by this we generalize these inequalities further in \Cref{sec:tree-ineq} by considering more general trees instead of stars.
	
	By \cite{NP-hard_trees} the minimum multicut problem is NP-hard on trees (in fact already on stars).
	Motivated by this, we present a polynomial time separation algorithm for generalizations of circular- and complete n-star inequalities for the multicut dominants when the input graph is restricted to a tree.
	We call the inequalities obtained from these facet-defining inequalities by applying \Cref{cor: Replace_edge_by_path} \emph{subdivided circular $n$-star inequalities} and \emph{subdivided complete $n$-star inequalities}, respectively.
	By \Cref{thm: projection} (iv) they yield facet-defining inequalities for each graph containing the according subdivision of $K_{1,n}$ with respective sets of terminal pairs.
	
	\begin{cor}\label{cor: Separation_Stars}
		Let $k \in \N$ be fixed.
		Given an input graph $G$ that is a tree and a set $S \subseteq \binom{V(G)}{2}$ of terminals, we can enumerate all facet-defining subdivided circular $k$-stars and subdivided complete $k$-stars inequalities for $\Mcut(G,S)$ in polynomial time.\\
		In particular, these inequalities can thus be separated in polynomial time.
	\end{cor}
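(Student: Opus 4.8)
The plan is to turn the separation question into an enumeration question and then bound the number of candidate inequalities polynomially. First I would pin down exactly which subdivided circular (resp. complete) $k$-star inequalities can be facet-defining for $\Mcut(G,S)$. By \Cref{thm: circular_star_facet} (resp. \Cref{thm: complete-star}), \Cref{cor: Replace_edge_by_path}, and \Cref{thm: projection} (iv), an embedding of a subdivided $K_{1,k}$ into $G$ --- that is, a subgraph $H\subseteq G$ that is a subdivision of $K_{1,k}$ with leaf set $L$ and center $r$ --- yields the facet-defining inequality $\sum_{e\in E(H)}x_e\geq\lceil k/2\rceil$ (resp. $\geq k-1$) whenever $S\cap\binom{V(H)}{2}$ is exactly a $k$-cycle (resp. the complete graph $\binom{L}{2}$) on $L$, with $k$ odd in the circular case. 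Conversely, \Cref{thm: projection} (iii) shows that if such an inequality is facet-defining for $\Mcut(G,S)$ then its restriction is facet-defining for $\Mcut(H,\,S\cap\binom{V(H)}{2})$, so the induced terminal set must already have the prescribed form. This gives a clean combinatorial characterization: the facet-defining subdivided circular (resp. complete) $k$-star inequalities correspond precisely to the subdivided-$K_{1,k}$ subgraphs $H$ of $G$ whose induced terminal pairs form exactly a $k$-cycle (resp. $\binom{L}{2}$) on the leaf set $L$.

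Next I would enumerate these subgraphs. Since $G$ is a tree, any subdivision $H$ of $K_{1,k}$ contained in $G$ is the unique minimal subtree of $G$ spanning its $k$ leaves, so $H$ is completely determined by its leaf set $L\in\binom{V(G)}{k}$. Moreover, every leaf of $H$ must be a terminal (each leaf occurs in a terminal pair of the circular/complete structure), so it suffices to range $L$ over the $k$-subsets of terminal nodes. For fixed $k$ there are only $\binom{|V(G)|}{k}=O(|V(G)|^{k})$ such subsets, and for each candidate $L$ I would compute the spanned subtree $H$ (by iteratively pruning degree-$1$ vertices not in $L$), test in polynomial time whether $H$ is a subdivision of $K_{1,k}$ (exactly one branch vertex, of degree $k$, with leaves equal to $L$), and then test whether $S\cap\binom{V(H)}{2}$ equals $\binom{L}{2}$ or forms a single $k$-cycle on $L$ --- a constant-size combinatorial check for fixed $k$. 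Each surviving $H$ contributes exactly one inequality (identified by its support $E(H)$), which I output.

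The total running time is $O(|V(G)|^{k})$ times a polynomial overhead, hence polynomial for fixed $k$, which proves the enumeration claim; separation then follows immediately, since to separate a point $x^{*}$ one only evaluates each of the polynomially many enumerated inequalities at $x^{*}$ and reports a violated one. The main obstacle I anticipate is not the counting but the characterization step: one must argue carefully, via \Cref{thm: projection} (iii)--(iv), that being facet-defining is \emph{equivalent} to the induced terminal set matching the prescribed pattern exactly --- so that, in particular, no extraneous terminal pair (one involving the center, an internal subdivision node, or a non-adjacent pair of leaves) is present --- and that in a tree every such $H$ really does arise as the minimal subtree spanned by a $k$-subset of (terminal) vertices, so that the enumeration is both complete and free of double-counting.
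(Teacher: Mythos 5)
Your proposal is correct and takes essentially the same approach as the paper: enumerate the polynomially many candidate subdivided $k$-stars in the tree, check the subdivision structure and that the induced terminal pairs match the prescribed pattern exactly (no extraneous pairs on the center or internal nodes), and obtain separation by evaluating each enumerated inequality. The only notable difference is cosmetic: you index candidates by $k$-element leaf sets (using uniqueness of the spanning subtree in a tree), giving $O(|V|^{k})$ candidates, whereas the paper enumerates root-plus-leaves tuples, giving $O(|V|^{k+1})$; both are polynomial for fixed $k$, and both proofs leave the same converse step (that facet-definingness forces the induced terminal pattern) at the level of a sketch, which you at least flag explicitly.
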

	
	\begin{proof}
		We prove that all facet-defining subdivided complete $k$-star inequalities can be enumerated in polynomial time.
		This can then be shown analogously for subdivided circular $k$-star inequalities.
		Checking each enumerated inequality individually yields a simple separation routine.
		
		Let $G=(V,E)$ be a tree with $|V|=n$.
		There are $n \cdot \binom{n-1}{k} \in \mathcal{O}(n^{k+1})$ choices for a root $r \in V$ and nodes $v_1, \dots , v_k \in V \setminus \{r\}$. 
		We check in linear time (in $|V|+|S|$) whether these nodes form the root and leaves of a $K_{1,k}$ subdivision in $G$ by searching for the unique $r$-$v_i$-paths in $G$ while checking whether these paths are disjoint and no terminal pair containing a node different from $v_1, \dots, v_k$ is induced.
		Then, we can verify whether the leaves induce the necessary terminal pairs in $S$.
		Hence, we obtain an overall runtime of $\mathcal{O}((|V|+|S|)^{k+2})$
	\end{proof}

	Although enumerating all such inequalities might not be very practical, this result should be considered as a proof of concept.
	We are convinced that there are more efficient separation routines for these inequalities using more sophisticated algorithmic approaches.
	However, this discussion would be out of scope for this work.

\section{Tree Inequalities}\label{sec:tree-ineq}
	As we saw in \Cref{ex: splitted_3-claw}, the star inequalities can be generalized to facet-defining inequalities on trees by applying node splits. In the following, we further investigate these inequalities.
	
	Throughout this section we consider the graph $T_n$, as showcased in \Cref{fig: Gn}: $T_n$ is a rooted tree on $n^2+1$ nodes: The root $r$ has $n$ children $v_1, \dots, v_n$ and there are leaves $s_{i,j}$, $t_{i,j}$ ($1 \leq i < j \leq n$) such that $s_{i,j}$ is a child of $v_i$ and $t_{i,j}$ is a child of $v_j$. For $i,j \in [n]$, we set $e_i=r v_i$, $e_{i,j}=v_is_{ij}$, and $f_{i,j}=v_jt_{ij}$. Moreover, we let $L_1^n=\{e_1, \dots, e_n\}$ and $L_2^n=\{e_{i,j},f_{i,j}: 1 \leq i < j \leq n\}$.
	Finally, let $S_n=\{\{s_{i,j},t_{i,j}\}:1 \leq i < j \leq n\}$.
	Observe that $|L_1^n|=n$ and $|L_2^n|=2 \binom{n}{2}$; $G$ thus has precisely $n^2$ edges.
	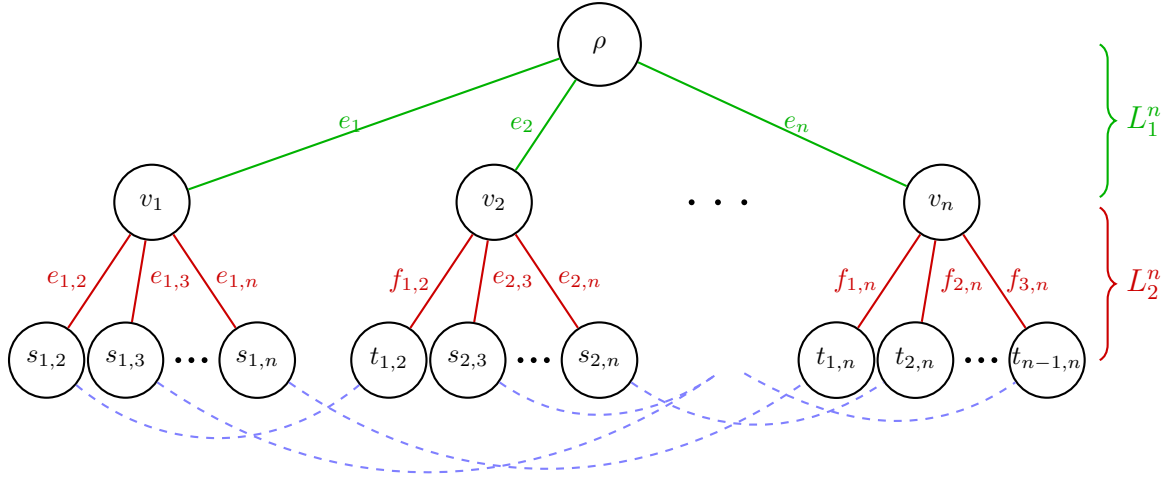
\begin{figure}
		\begin{tikzpicture}[scale=.7]
			\node[circle,draw,minimum size=1.1cm] (r) at (1,0) {\footnotesize $\rho$};
			
			\node[circle,draw,minimum size=1cm] (v1) at (-7.5,-3) {\footnotesize $v_1$};
			\node[circle,draw,minimum size=1cm] (v2) at (-1,-3) {\footnotesize $v_2$};
			\node[circle,draw,minimum size=1cm] (vn) at (7.5,-3) {\footnotesize $v_n$};
			\draw[green!70!black] (r)--node[above,left]{\footnotesize{$e_1$}}(v1);
			\draw[green!70!black] (r)--node[above,left]{\footnotesize{$e_2$}}(v2);
			\draw[green!70!black] (r)--node[above,right]{\footnotesize{$e_n$}}(vn);
			
			\node[circle,draw,minimum size=1cm] (s12) at (-9.5,-6) {\footnotesize $s_{1,2}$};
			\node[circle,draw,minimum size=1cm] (s13) at (-8,-6) {\footnotesize $s_{1,3}$};
			\node[circle,draw,minimum size=1cm] (s1n) at (-5.5,-6) {\footnotesize $s_{1,n}$};
			\draw[red!80!black] (v1) -- node[above,left]{\footnotesize{$e_{1,2}$}} (s12);
			\draw[red!80!black] (v1) -- node[above,right]{\footnotesize{$e_{1,3}$}} (s13);
			\draw[red!80!black] (v1) -- node[above,right]{\footnotesize{$e_{1,n}$}} (s1n);
			
			\node[circle,draw,minimum size=1cm] (t12) at (-3,-6) {\footnotesize $t_{1,2}$};
			\node[circle,draw,minimum size=1cm] (s23) at (-1.5,-6) {\footnotesize $s_{2,3}$};
			\node[circle,draw,minimum size=1cm] (s2n) at (1,-6) {\footnotesize $s_{2,n}$};
			\draw[red!80!black] (v2) -- node[above,left]{\footnotesize{$f_{1,2}$}} (t12);
			\draw[red!80!black] (v2) -- node[above,right]{\footnotesize{$e_{2,3}$}} (s23);
			\draw[red!80!black] (v2) -- node[above,right]{\footnotesize{$e_{2,n}$}} (s2n);
			
			\node[circle,draw,minimum size=1cm] (t1n) at (5.5,-6) {\footnotesize $t_{1,n}$};
			\node[circle,draw,minimum size=1cm] (t2n) at (7,-6) {\footnotesize $t_{2,n}$};
			\node[circle,draw,inner sep=-2mm,minimum size=1cm] (tn-1n) at (9.5,-6) {\footnotesize $t_{n{-}1,n}$};
			\draw[red!80!black] (vn) -- node[above,left]{\footnotesize{$f_{1,n}$}} (t1n);
			\draw[red!80!black] (vn) -- node[above,right]{\footnotesize{$f_{2,n}$}} (t2n);
			\draw[red!80!black] (vn) -- node[above,right]{\footnotesize{$f_{3,n}$}} (tn-1n);
			
			\node[circle,fill,scale=.2] (d1) at (3.25,-3) {};
			\node[circle,fill,scale=.2] (d2) at (3.75,-3) {};
			\node[circle,fill,scale=.2] (d3) at (2.75,-3) {};
			
			\node[circle,fill,scale=.2] (d4) at (-6.75,-6) {};
			\node[circle,fill,scale=.2] (d5) at (-6.5,-6) {};
			\node[circle,fill,scale=.2] (d6) at (-7,-6) {};
			
			\node[circle,fill,scale=.2] (d9) at (0,-6) {};
			\node[circle,fill,scale=.2] (d7) at (-0.25,-6) {};
			\node[circle,fill,scale=.2] (d8) at (-0.5,-6) {};
			
			\node[circle,fill,scale=.2] (d10) at (8,-6) {};
			\node[circle,fill,scale=.2] (d11) at (8.25,-6) {};
			\node[circle,fill,scale=.2] (d12) at (8.5,-6) {};
			
			\draw[green!70!black,thick,decorate,decoration={brace,amplitude=6pt}] (10.5,0) -- (10.5,-2.9) node[midway, right,xshift=2mm]{$\color{green!70!black}L_1^n$};
			\draw[thick,red!80!black,decorate,decoration={brace,amplitude=6pt}] (10.5,-3.1) -- (10.5,-6) node[midway, right,xshift=2mm]{$\color{red!80!black}L_2^n$};
		
			\draw[blue!50,dashed]         (s12)edge[out=-40,in=-140](t12);
			\draw[blue!50,dashed]         (s1n)edge[out=-35,in=-145](t1n);
			\draw[blue!50,dashed]         (s2n)edge[out=-35,in=-145](t2n);
			\draw[blue!50,dashed]         (s13)edge[out=-35,in=-145](3.25,-6.25);
			\draw[blue!50,dashed]         (s23)edge[out=-35,in=-145](3.25,-6.25);
			\draw[blue!50,dashed] (3.75,-6.25) edge[out=-30,in=-145](tn-1n);
		\end{tikzpicture}
		\caption{The tree $T_n$. Edges in $L_1^n$ are green, edges in $L_2^n$ are red. Dashed blue connections visualize $S_n$}\label{fig: Gn}
	\end{figure}
	The main goal of this section is to prove the following theorem:
	\begin{thm}\label{thm: complete_tree_ineq}
		For all $n >k \geq 2$, the \emph{$(n,k)$-tree inequalities}
		\begin{equation}\label{eq: complete_tree}
			(n-k) \sum_{e \in L_1^n}x_e + \sum_{e \in L_2^n}x_e \geq k (n-k)+ \binom{n-k}{2}
		\end{equation}
	
		define \nice facets of $\Mcut(T_n,S_n)$.
	\end{thm}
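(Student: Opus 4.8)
The plan is to first establish validity and tightness, and then to show that the face has dimension $n^2-1=\dim\Mcut(T_n,S_n)-1$ by proving that every linear equation satisfied by all tight multicuts is a scalar multiple of \eqref{eq: complete_tree}.

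For validity I would fix an arbitrary multicut $\delta$ and set $p=|\delta\cap L_1^n|$ (the number of cut ``top'' edges) and $U=\{i: e_i\notin\delta\}$, so $|U|=n-p$. For every pair $\{i,j\}\subseteq U$ the only edges on the unique $s_{i,j}$-$t_{i,j}$-path other than $e_i,e_j$ are the two ``bottom'' edges $e_{i,j},f_{i,j}$; since $e_i,e_j\notin\delta$, at least one of these must lie in $\delta$. As these bottom edges are pairwise distinct across pairs, $|\delta\cap L_2^n|\ge\binom{n-p}{2}$, so the left-hand side of \eqref{eq: complete_tree} is at least $g(p):=(n-k)p+\binom{n-p}{2}$. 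A one-line computation gives $g(p)-g(p-1)=p-k$, so $g$ is minimized exactly at $p\in\{k-1,k\}$ with common value $k(n-k)+\binom{n-k}{2}$, the right-hand side of \eqref{eq: complete_tree}. This proves validity, and since all coefficients are positive (here $n>k$ is used) it also identifies the tight multicuts: $\delta$ attains equality iff $p\in\{k-1,k\}$, exactly one of $e_{i,j},f_{i,j}$ is cut for each pair $\{i,j\}\subseteq U$, and no other bottom edge is cut. Such multicuts clearly exist.

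To prove the facet property I would take an arbitrary equation $c^\T x=d$ valid on all tight multicuts and show that $c$ is a positive multiple of the coefficient vector of \eqref{eq: complete_tree}. Write $c_i:=c_{e_i}$ and $\alpha_{i,j}:=c_{e_{i,j}}$, $\beta_{i,j}:=c_{f_{i,j}}$. First, in a fixed tight multicut with $\{i,j\}\subseteq U$, swapping the cut bottom edge of the pair $\{i,j\}$ (replacing $e_{i,j}$ by $f_{i,j}$) yields another tight multicut, forcing $\alpha_{i,j}=\beta_{i,j}=:\gamma_{i,j}$. Next, for $U$ of size $n-k+1$ and $u\in U$, I compare the tight multicut on $U$ with the one obtained by additionally cutting $e_u$ and removing the bottom edges of the $n-k$ pairs $\{u,j\}$, $j\in U\setminus\{u\}$ (which are no longer needed and must be removed to stay tight); equating $c$-values gives
\[
c_u=\sum_{j\in U\setminus\{u\}}\gamma_{u,j}.
\]
Since $U\setminus\{u\}$ is an arbitrary $(n-k)$-subset of $[n]\setminus\{u\}$ and $k\ge2$ guarantees at least two such subsets differing in a single element, comparing them forces all $\gamma_{u,j}$ ($j\ne u$) to be equal; using $\gamma_{u,j}=\gamma_{j,u}$ this propagates to one common value $\gamma$ for all pairs, and then $c_u=(n-k)\gamma$ for every $u$. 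Hence $(c,d)=\gamma\cdot(a,b)$ with $(a,b)$ the data of \eqref{eq: complete_tree}, so the tight multicuts affinely span a hyperplane and \eqref{eq: complete_tree} is facet-defining. Finally, because every coefficient is strictly positive we have $\supp(a)=T_n$, so by \Cref{thm: boundedness} the facet is bounded and therefore \nice.

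I expect the main obstacle to be the second part, and specifically the propagation step that collapses all bottom coefficients to a single value: the relation $c_u=\sum_{j}\gamma_{u,j}$ only becomes useful once one exploits that the summation index set can be varied freely (which is exactly where $k\ge2$ enters) together with the symmetry $\gamma_{u,j}=\gamma_{j,u}$. Verifying that each elementary exchange genuinely stays within the class of tight multicuts — i.e. that no forbidden $s$-$t$-path is created and no superfluous edge is left behind — is the routine but delicate bookkeeping underlying the whole argument.
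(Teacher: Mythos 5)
Your proposal is correct, but it proves the facet property by a genuinely different method than the paper. The paper's argument is primal and constructive: after the same validity/tightness computation (its Lemma on validity likewise shows the tight solutions are exactly the minimal multicuts with $|\delta\cap L_1^n|\in\{k-1,k\}$), it exhibits $n^2$ affinely independent tight incidence vectors by a double induction --- first an induction on $n$ for the base case $k=2$ (anchored in the node-splitting example), then an induction step from $(n,k)$ to $(n+1,k+1)$ that extends the inductive family using indicator vectors of $(n-k-1)$-subsets and a separating hyperplane $\mathcal H$ to verify affine independence. You instead use the indirect (dual) facet criterion: since $\Mcut(T_n,S_n)$ is full-dimensional, it suffices to show that any equation $c^\T x=d$ valid on all tight multicuts is a scalar multiple of the $(n,k)$-tree equality, which you do by three exchange arguments (swapping $e_{i,j}\leftrightarrow f_{i,j}$ inside $U$; trading a top edge $e_u$ against its row of bottom edges, giving $c_u=\sum_{j\in U\setminus\{u\}}\gamma_{u,j}$; and varying the $(n-k)$-subset, which is exactly where $k\ge 2$ enters, to collapse all $\gamma_{u,j}$ to a single value). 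Both routes rest on the same tightness characterization and both conclude \nice{}ness from full support via \Cref{thm: boundedness}; your route avoids induction entirely and is arguably more transparent, while the paper's construction has the side benefit of producing an explicit affinely independent family of tight vertices. Two small points you should make explicit in a final write-up: the face must be nonempty and proper (trivial here, since tight multicuts exist and, e.g., $x^{E(T_n)}$ is not tight), and the equation argument needs the known fact $\dim\Mcut(T_n,S_n)=|E(T_n)|$ so that the hyperplane criterion for facets applies.
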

	To prove this theorem we first prove two auxiliary lemmata.
	\begin{lem}\label{lem: complete_tree_ineq-valid}
		For $n >k \geq 2$, inequality \eqref{eq: complete_tree} is valid for $\Mcut(T_n,S_n)$.
		In particular, the solutions for which \eqref{eq: complete_tree} is tight are precisely the minimal multicuts~$\delta$ in $G$ with $|\delta \cap L_1^n|\in\{k-1,k\}$.
	\end{lem}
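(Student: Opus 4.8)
The plan is to verify the inequality on the vertices of $\Mcut(T_n,S_n)$, which are the incidence vectors of minimal multicuts. Since every coefficient in \eqref{eq: complete_tree} is strictly positive (namely $n-k>0$ on $L_1^n$ and $1$ on $L_2^n$) and $\Mcut(T_n,S_n)=\PMcut(T_n,S_n)+\R_{\ge 0}^{E}$, it suffices to check validity on incidence vectors of multicuts, and—by monotonicity of the left-hand side under adding edges—on the minimal ones; these same vertices are then exactly the tight points among all multicuts.

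First I would record the combinatorial structure of multicuts. For each pair $\{s_{i,j},t_{i,j}\}\in S_n$ the unique path in $T_n$ joining its terminals uses exactly the four edges $e_{i,j},e_i,e_j,f_{i,j}$, so $\delta\subseteq E(T_n)$ is a multicut iff it meets each such four-element set. Writing $A=\{i:e_i\in\delta\}$ and $a=|A|=|\delta\cap L_1^n|$, a pair $(i,j)$ is automatically separated whenever $i\in A$ or $j\in A$; the remaining pairs are those with $i,j\notin A$, of which there are $\binom{n-a}{2}$, and each of these forces at least one of $e_{i,j},f_{i,j}$ into $\delta$. Hence $\sum_{e\in L_2^n}x^\delta_e\ge\binom{n-a}{2}$, so the left-hand side of \eqref{eq: complete_tree} is at least $g(a):=(n-k)a+\binom{n-a}{2}$.

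The heart of the argument is a discrete convexity computation for $g$. Using $\binom{n-a}{2}-\binom{n-a-1}{2}=n-a-1$ one gets $g(a)-g(a+1)=k-a-1$, which is positive for $a<k-1$, zero for $a=k-1$, and negative for $a>k-1$. Thus $g$ strictly decreases on $\{0,\dots,k-1\}$, satisfies $g(k-1)=g(k)$, and strictly increases on $\{k,\dots,n\}$, so its minimum value is $g(k)=k(n-k)+\binom{n-k}{2}$, attained exactly at $a\in\{k-1,k\}$. This immediately yields validity.

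For the equality characterization I would argue that a minimal multicut $\delta$ attains $g(a)$ exactly: minimality rules out any $L_2$-edge on an already-separated pair (such an edge is redundant) and rules out cutting both $e_{i,j}$ and $f_{i,j}$ on an unseparated pair (one is redundant), so $\sum_{e\in L_2^n}x^\delta_e=\binom{n-a}{2}$ and the left-hand side equals $g(a)$. Combined with the previous paragraph, a minimal multicut is tight iff $a\in\{k-1,k\}$. The main obstacle is the bookkeeping here: one must confirm both that the configurations with $a\in\{k-1,k\}$ and exactly one cut $L_2$-edge per unseparated pair are genuinely \emph{minimal} multicuts—removing any $e_i$ un-separates some pair $(i,j)$ with $j\notin A$, which exists since $a\le k<n$, and removing any cut $L_2$-edge un-separates its pair—and that no minimal multicut with $a\notin\{k-1,k\}$ is tight, which is precisely the strict monotonicity of $g$ away from $\{k-1,k\}$.
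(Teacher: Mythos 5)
Your proposal is correct and follows essentially the same route as the paper: reduce to minimal multicuts, observe that minimality forces exactly one $L_2^n$-edge for each pair not separated by the $L_1^n$-part (so $|\delta\cap L_2^n|=\binom{n-\ell}{2}$ with $\ell=|\delta\cap L_1^n|$), and then minimize $g(\ell)=(n-k)\ell+\binom{n-\ell}{2}$. The only difference is cosmetic: the paper certifies the minimum via the closed-form factorization $g(\ell)-\bigl(k(n-k)+\binom{n-k}{2}\bigr)=\tfrac12(k-\ell)(k-\ell-1)\geq 0$, with equality exactly for $\ell\in\{k-1,k\}$, whereas you obtain the same conclusion from the discrete differences $g(a)-g(a+1)=k-a-1$.
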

	\begin{proof}
		Let $\delta$ be a minimal multicut with $|\delta \cap L_1^n|=\ell$. Since there are $\binom{n-\ell}{2}$ terminal pairs not separated by $\delta \cap L_1^n$ and the removal of an edge in $L_2^n$ separates at most one of those pairs we have $|\delta \cap L_2^n| =\binom{n-\ell}{2}$. Thus,
		\begin{align*}
			&(n-k) \sum_{e \in L_1^n}x^\delta_e + \sum_{e \in L_2^n}x^\delta_e -\left( k (n-k)+ \binom{n-k}{2}\right)\\
			=\ &\ell(n-k)+\binom{n-\ell}{2}- k (n-k)- \binom{n-k}{2}	\\
			=\ &\frac{1}{2}(k-\ell-1)(k-\ell)\geq 0.
		\end{align*}
		Where the last inequality holds since $k, \ell \in \N$.
		The \emph{in particular} part follows since the above inequality is satisfied with equality if and only if $\ell \in \{k-1,k\}$.
	\end{proof}
	
	The following lemma considers the case $k=2$ of \eqref{eq: complete_tree}.
	
	\begin{lem}\label{lem: complete_tree_ineq-base_case}
		For $n \geq 3$ and $k=2$, inequality \eqref{eq: complete_tree} is facet-defining for $\Mcut(T_n,S_n)$.
	\end{lem}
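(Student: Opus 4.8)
The plan is to prove that the $(n,2)$-tree inequality defines a facet of $\Mcut(T_n,S_n)$ by exhibiting $n^2=\dim\Mcut(T_n,S_n)$ affinely independent minimal multicuts, each tight for \eqref{eq: complete_tree}. By \Cref{lem: complete_tree_ineq-valid} with $k=2$, the tight minimal multicuts are exactly those $\delta$ with $|\delta\cap L_1^n|\in\{1,2\}$; the inequality here reads $(n-2)\sum_{e\in L_1^n}x_e+\sum_{e\in L_2^n}x_e\ge 2(n-2)+\binom{n-2}{2}$. So every multicut I construct must cut either one or two of the "top" edges $e_i=rv_i$, and then cut exactly the $L_2^n$-edges needed to separate the remaining terminal pairs (one $L_2^n$-edge per unseparated pair, by minimality).

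\emph{Key steps.} First I would set up coordinates: a point of $\R^{E}$ has $n$ coordinates indexed by $L_1^n$ and $2\binom{n}{2}$ coordinates indexed by $L_2^n$. To show affine independence of the chosen incidence vectors, the cleanest route is to pick one distinguished tight multicut $\delta^*$ and show the $n^2-1$ difference vectors $x^{\delta_i}-x^{\delta^*}$ are linearly independent. A natural distinguished choice is a multicut cutting exactly two top edges, say $e_1,e_2$, plus the $L_2^n$-edges separating every pair not already split by removing $v_1,v_2$ from the root. Then I would build families of tight multicuts whose difference vectors form (after reordering) an upper-triangular pattern. \emph{Family A} (varying the $L_2^n$-block): fixing the same two cut top edges, I can toggle which of the two $L_2^n$-edges $\{e_{i,j},f_{i,j}\}$ separates a given pair $\{s_{i,j},t_{i,j}\}$ whenever that pair is not separated at the root; this produces difference vectors supported only on $L_2^n$, and I expect these to span the relevant part of the $L_2^n$-coordinate space. \emph{Family B} (varying the top cut): taking multicuts that cut a \emph{single} top edge $e_i$ (the case $|\delta\cap L_1^n|=1$) forces, by minimality, all $\binom{n-1}{2}$ pairs among the remaining leaves to be cut in $L_2^n$; comparing such a one-top-edge multicut with a two-top-edge multicut yields difference vectors whose $L_1^n$-support is nontrivial. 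These let me reach the $n$ top-edge coordinates. Combining the two families should give a full-rank system.

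\textbf{The main obstacle} I anticipate is the careful bookkeeping needed to guarantee that the chosen multicuts are simultaneously (i) genuinely minimal (so no extra $L_2^n$-edge can be dropped, which is what pins down the $L_2^n$-coordinates exactly), (ii) tight for the inequality (so $|\delta\cap L_1^n|\in\{1,2\}$ throughout), and (iii) arranged so their difference vectors are provably independent rather than merely "generically" so. The delicate point is the interaction between the $L_1^n$ and $L_2^n$ blocks: cutting one top edge versus two changes the number of forced $L_2^n$-edges (from $\binom{n-1}{2}$ down to $\binom{n-2}{2}$), so the two families live on tight sets of different $L_2^n$-cardinality, and I must track how many pairs each choice separates. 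I would handle this by ordering the constructed vectors so that each newly introduced multicut "activates" a coordinate not yet touched, yielding a triangular linear system; once that triangular structure is verified, affine independence is immediate. A secondary, lighter obstacle is confirming the "\nice" conclusion, but since the support of \eqref{eq: complete_tree} is all of $T_n$, \Cref{thm: boundedness} makes the facet automatically \nice, so no extra work is needed there.
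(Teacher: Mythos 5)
Your raw material is right, and your route is genuinely different from the paper's: the paper proves this lemma by induction over $n$ (base case $n=3$ imported from \Cref{ex: splitted_3-claw}, i.e., from node splitting), whereas you propose one direct construction. The direct route is viable in principle: the toggle differences $\mathds{1}_{f_{i,j}}-\mathds{1}_{e_{i,j}}$ (your Family A, ranging over all top sets) together with the differences between one-top-edge and two-top-edge tight multicuts (your Family B) really do span a space of dimension $n^2-1$, so $n^2$ affinely independent tight vectors exist among your candidates. The genuine gap is in how you certify affine independence. Your scheme --- order the multicuts so that ``each newly introduced multicut activates a coordinate not yet touched'' --- provably cannot produce $n^2$ vectors. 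Indeed, if $v_0,v_1,\dots,v_m\in\{0,1\}^{E(T_n)}$ are ordered so that each $v_k$ ($k\geq 1$) is nonzero in some coordinate $c_k$ in which $v_0,\dots,v_{k-1}$ all vanish, then the $c_k$ are pairwise distinct and none of them lies in $\supp(v_0)$, so $m\leq n^2-|\supp(v_0)|$. But by \Cref{lem: complete_tree_ineq-valid} every tight multicut consists of $\ell\in\{1,2\}$ edges of $L_1^n$ plus exactly $\binom{n-\ell}{2}$ edges of $L_2^n$, hence has at least $2+\binom{n-2}{2}$ edges; therefore any such ordering contains at most $n^2-1-\binom{n-2}{2}$ vectors. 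Already for $n=3$ this caps you at $8$ of the $9$ vectors you need, and the deficit grows with $n$.

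The fix is to separate later vectors from the affine hull of earlier ones by general affine functionals, not only by coordinate functionals $x_c=0$. This is exactly the device in the paper's own proof: the first batches of vectors are separated by coordinate hyperplanes such as $\{x_{e_n}=0\}$, but the last batch $\gamma_1,\dots,\gamma_n$ is handled with the non-coordinate hyperplanes $\mathcal{H}=\bigl\{\sum_{i=1}^{k}x_{e_i}+(k-1)x_{e_n}+\sum_{i=1}^{k}(x_{e_{i,n}}+x_{f_{i,n}})=k\bigr\}$, each containing all previously constructed vectors but not the next one. Alternatively, you could abandon triangularity and directly compute the rank of the difference vectors from your two families (quotienting by the toggle directions makes this a clean calculation), but that computation must actually be carried out; it does not follow from the activation scheme as stated. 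A minor point: your remark on the shared property is unnecessary here, since the lemma claims only facet-definingness --- the shared statement belongs to \Cref{thm: complete_tree_ineq} --- though your appeal to \Cref{thm: boundedness} for it is correct.
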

	\begin{proof}
		For the reader's ease, we rewrite the inequality under consideration as
		\begin{equation}\label{eq: complete_tree-k=2}
		(n-2) \sum_{e \in L_1^n}x_e + \sum_{e \in L_2^n}x_e \geq 2 (n-2)+ \binom{n-2}{2}
		\end{equation}
		Validity of \eqref{eq: complete_tree-k=2} is shown in \Cref{lem: complete_tree_ineq-valid}. It remains to verify that the inequality is indeed facet-defining.
		To this end, we show by induction over $n$ that there are $n^2$ $S_n$-multicuts with affinely independent incidence-vectors satisfying \eqref{eq: complete_tree-k=2} with equality.
		For $n=3$, this follows from \Cref{ex: splitted_3-claw}.
		
		Now, let $n \geq 4$. By induction there exist $S_{n-1}$-multicuts $\delta'_1, \dots, \delta'_{(n-1)^2}$ in $T_{n-1}$ such that $\{x^{\delta'_i}: 1 \leq i \leq (n-1)^2 \}$ is an affine independent set and all $x^{\delta'_i}$ satisfy the equality
		 $((n-1)-2) \sum_{e \in L_1^{n-1}}x^{\delta'_i}_e + \sum_{e \in L_2^{n-1}}x^{\delta'_i}_e = 2 ((n-1)-2)+ \binom{(n-1)-2}{2}$.

		Let $\delta_i=\delta'_i\cup \{e_{j,n}: j \in [n],\ e_j \notin \delta'_i\}$ for $1 \leq i \leq (n-1)^2$.
		Since $x^{\delta'_1}, \dots, x^{\delta'_{(n-1)^2}}$ are affinely independent and $|E(T_{n-1})|=(n-1)^2$, for each $\ell \in [n-1]$ there is some $i_\ell \in [(n-1)^2]$ with $e_\ell \notin \delta'_{i_\ell}$.
		Setting $\widehat{\delta}_\ell=\delta'_{i_\ell} \cup\{e_{j,n}: j \in [n]\setminus\{\ell\},\ e_j \notin \delta'_{i_\ell}\}\cup\{f_{\ell,n}\}$ for $1 \leq \ell <n$ the set
		$$A=\Big\{x^{\delta_i}:1 \leq i \leq (n-1)^2 \Big\}\cup \Big\{x^{\widehat{\delta}_\ell}:1 \leq \ell <n \Big\}$$
		is affinely independent and each $x \in A$, attains equality in \eqref{eq: complete_tree-k=2}.
		Now, for $1 \leq i <n$ let $\gamma_i= \{e_i,e_n\} \cup \{e_{a,b}: a,b \neq i,\ 1 \leq a <b <n\}$ and $\gamma_n=\{e_n\} \cup \{e_{a,b}: 1 \leq a < b <n\}$. 
		Then $x^{\gamma_i}$ attains equality in \eqref{eq: complete_tree-k=2}.
		We prove that $A \cup \{x^{\gamma_1},\dots ,x^{\gamma_n}\}$ is affinely independent. Since $|A \cup \{x^{\gamma_1},\dots ,x^{\gamma_n}\}|=n^2$ this yields the claim.
		Note that $A \subseteq \{x_n=0\}$ and $x^{\gamma_1} \notin \{x_n=0\}$. Thus, $A \cup \{x^{\gamma_1}\}$ is affinely independent.
		Now, assume that $A\cup \{x^{\gamma_1},\dots ,x^{\gamma_k}\}$ is affinely independent. Let
		$\mathcal{H}=\left\{\sum_{i=1}^k x_i + (k-1)x_n + \sum_{i=1}^k (x_{e_{i,n}}+x_{f_{i,n}})=k\right\}$. By construction, we have
		$$A\cup \{x^{\gamma_1},\dots ,x^{\gamma_k}\} \subseteq \mathcal H
			\qquad \text{and} \qquad x^{\gamma_{k+1}}\notin  \mathcal H.$$
		Thus, $A\cup \{x^{\gamma_1},\dots ,x^{\gamma_{k+1}}\}$ is affinely independent and the claim follows by induction.
	\end{proof}
	Given the previous two lemmata, we can now prove the main theorem of this section.
	As a tool we use the following simple observation from linear algebra:
	\begin{obs}\label{lem: linear_independent_subsets}
		Let $n > k \geq 1$ and let $\mathds{1}_i \in \R^n$ be the $i$-th unit-vector. Then, there exist $M_1, \dots ,M_n \in \binom{[n]}{k}$ such that $\sum_{i \in M_1}\mathds{1}_i, \dots,\sum_{i \in M_n}\mathds{1}_i$ are linearly independent.
	\end{obs}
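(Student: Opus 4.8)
The plan is to avoid committing to an explicit list of $n$ subsets. The tempting choice of cyclic \qq{consecutive windows} $M_j=\{j,\dots,j+k-1\}\bmod n$ actually fails whenever $\gcd(n,k)>1$: for instance with $n=4$, $k=2$ the four windows give $\sum_{i\in M_1}\mathds{1}_i+\sum_{i\in M_3}\mathds{1}_i=\sum_{i\in M_2}\mathds{1}_i+\sum_{i\in M_4}\mathds{1}_i$, a linear dependence. Instead I would prove the stronger and cleaner statement that the \emph{entire} family of size-$k$ indicator vectors $\bigl\{\sum_{i\in M}\mathds{1}_i : M\in\binom{[n]}{k}\bigr\}$ spans $\R^n$. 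Once this is known, any spanning set contains a basis, so one can select $n$ linearly independent members $M_1,\dots,M_n$, which is exactly the assertion.

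To establish spanning, I would first show that every difference $\mathds{1}_b-\mathds{1}_a$ with $a\neq b$ lies in the span. Given $a\neq b$, since $1\le k\le n-1$ there is a $k$-subset $M$ containing $a$ but not $b$: include $a$ and fill the remaining $k-1$ slots from $[n]\setminus\{a,b\}$, which is possible because $n-2\ge k-1$. Setting $M'=(M\setminus\{a\})\cup\{b\}$, again a $k$-subset, we get
$$\sum_{i\in M'}\mathds{1}_i-\sum_{i\in M}\mathds{1}_i=\mathds{1}_b-\mathds{1}_a,$$
so every such difference is in the span.

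Next I would observe that the vectors $\{\mathds{1}_b-\mathds{1}_a : a\neq b\}$ span the hyperplane $H=\{x\in\R^n:\sum_i x_i=0\}$, which has dimension $n-1$. Finally, any single size-$k$ indicator vector $w$ satisfies $\sum_i w_i=k\neq 0$, hence $w\notin H$; adjoining $w$ to a spanning set of $H$ yields a spanning set of all of $\R^n$. Therefore the size-$k$ indicator vectors span $\R^n$, and we may extract $n$ linearly independent ones $M_1,\dots,M_n$.

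I do not expect a genuine obstacle here. The only point requiring care is the existence of a $k$-subset separating a prescribed pair $a,b$, which is precisely where both hypotheses $k\ge 1$ (to contain $a$) and $k<n$ (to leave room to exclude $b$) are used. The conceptual step worth flagging is simply recognizing that no naive explicit construction works uniformly, so that the softer spanning argument is the right tool.
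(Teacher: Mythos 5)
Your proof is correct, but note that the paper offers no proof to compare against: the statement is labelled an observation (\Cref{lem: linear_independent_subsets}) and is used in the proof of \Cref{thm: complete_tree_ineq} as a ``simple observation from linear algebra'' with the argument omitted entirely. Your write-up therefore fills a gap rather than mirrors an existing proof. The argument itself is sound at every step: the exchange trick producing $\mathds{1}_b-\mathds{1}_a$ from two $k$-sets differing in one element uses exactly the hypotheses $k\geq 1$ and $k<n$; these differences span the hyperplane $\{\sum_i x_i=0\}$; and any single $k$-set indicator has coordinate sum $k\neq 0$, so the full family spans $\R^n$ and a finite spanning set contains a basis. Your warning about the cyclic-window construction is also accurate — the circulant matrix of consecutive windows is singular precisely when $\gcd(n,k)>1$ (your $n=4$, $k=2$ dependence is the smallest instance) — and this is a worthwhile point to flag, since the paper itself resorts to a circulant determinant computation in the proof of \Cref{thm: WagnerIneq}, where the analogous nonsingularity does require the parity/coprimality hypotheses imposed there. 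The only stylistic remark is that your spanning argument is non-constructive, whereas one could also exhibit explicit sets (e.g.\ consecutive windows when $\gcd(n,k)=1$, patched otherwise); but for an observation whose only role is existence, the softer argument is entirely adequate and arguably cleaner.
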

%	\begin{proof}
%		We prove the statement by induction on $n$. For $n=2$ the claim is trivial. So, let $n \geq 3$. 
%		If $k=n-1$ note that the Matrix $A=(a_{i,j})_{i,j}$ with 
%		$$a_{i,j}=\begin{cases}
%			0,&\text{if $i=j$,}\\
%			1,&\text{else}
%		\end{cases}$$ has 
%		\begin{align*}
%			\det A&= 
%			\det \begin{pmatrix}
%					n-1	&\cdots &\cdots &\cdots &	n-1	\\
%					1	&	0	&	1	&\cdots &	1	\\
%				\vdots	&\ddots	&\ddots&\ddots	&\vdots	\\
%				\vdots	&		&\ddots&\ddots	&	1	\\
%					1	&\cdots	&\cdots&	1	&	0	
%			\end{pmatrix}
%			=(n-1)\det \begin{pmatrix}
%					1	&\cdots &\cdots &\cdots &	1	\\
%					1	&	0	&	1	&\cdots &	1	\\
%				\vdots	&\ddots	&\ddots&\ddots	&\vdots	\\
%				\vdots	&		&\ddots&\ddots	&	1	\\
%					1	&\cdots	&\cdots&	1	&	0	
%			\end{pmatrix}\\
%			&=(n-1)\det \begin{pmatrix}
%				1	&\cdots &\cdots &\cdots &	1	\\
%				0	&	-1	&	0	&\cdots &	0	\\
%			\vdots	&\ddots	&\ddots&\ddots	&\vdots	\\
%			\vdots	&		&\ddots&\ddots	&	0	\\
%				0	&\cdots	&\cdots&	0	&	-1	
%			\end{pmatrix}=(n-1)(-1)^{n-1}
%		\end{align*}
%		yielding the claim. Hence, let $1 \leq k \leq n-2$. 
%		By induction there exist subsets $S_1, \dots, S_{n-1} \in \binom{[n-1]}{k}$ such that  $\sum_{i \in S_1}b_i, \dots,\sum_{i \in S_{n-1}}b_i$ are linearly independent.
%		Since $n \notin S_i$ for $1 \leq i \leq n-1$, setting $S_n=\{n-k+1,n-k+2, \dots, n\}$ it is easy to see that $\sum_{i \in S_1}b_i, \dots,\sum_{i \in S_{n}}b_i $ are linearly independent.
%	\end{proof}
	
	\begin{proof}[Proof of Theorem \ref{thm: complete_tree_ineq}]
		Validity of \eqref{eq: complete_tree} is proven in \Cref{lem: complete_tree_ineq-valid}.
		From \Cref{lem: complete_tree_ineq-base_case}  we know that the claim holds for any pair $(n,2)$ with $n \geq 3$. Using this as the basis for our induction, it suffices to show that the claim for the pair $(n+1,k+1)$ follows from the truth of the statement for $(n,k)$.
		Thus, for the induction step assume that
		\begin{equation}\label{eq: complete-tree_induction-hypothesis}
			(n-k)\sum_{e \in L_1^n}x_e+\sum_{e \in L_2^n}x_e \geq k(n-k)\binom{n-k}{2}
		\end{equation}
		is facet-defining for $\Mcut(T_n,S_n)$.
		We show that
		\begin{equation}\label{eq: complete-tree_induction-step}
			((n+1)-(k+1))\sum_{\mathclap{e \in L_1^{n+1}}}x_e+\sum_{\mathclap{e \in L_2^{n+1}}}x_e \geq (k+1) ((n+1)-(k+1))\binom{(n+1)-(k+1)}{2}
		\end{equation}
		is facet-defining for $\Mcut(T_{n+1},S_{n+1})$. 
		
		Let $n+1 \geq 4$ and $k+1 \geq 3$. 
		To prove the induction step, we construct $(n+1)^2$ $S_{n+1}$-multicuts with affinely independent incidence vectors each choosing $k$ or  $k+1$ edges in $L_1^{n+1}$. By \Cref{lem: complete_tree_ineq-valid} these incidence vectors satisfy \eqref{eq: complete-tree_induction-step}.
		
		Since by induction hypothesis \eqref{eq: complete-tree_induction-hypothesis} is facet-defining for $\Mcut(T_n,S_n)$, there exist $S_n$-multicuts $\delta'_1, \dots, \delta'_{n^2}$ in $T_n$ such that $x^{\delta'_1}, \dots , x^{\delta'_{n^2}}$ are affinely independent and attain equality.
		Thus, setting $\delta_i=\delta'_i \cup \{e_{n+1}\}$ for $1 \leq i \leq n$, the set $X^\delta= \{x^{\delta_1}, \dots , x^{\delta_n}\}$ is affinely independent and satisfy \eqref{eq: complete-tree_induction-step} with equality.
		
		By \Cref{lem: linear_independent_subsets} there exist $A_1, \dots, A_{n} \in \binom{[n]}{n-k-1}$ such that the vectors $\sum_{i \in A_1}\mathds{1}_i, \dots,\sum_{i \in A_n}\mathds{1}_i$ are linearly independent.
		Thus, setting 
		\begin{align*}
			\gamma_i &=\{e_j:j \in [n]\setminus A_i\} \cup\{e_{i,j}: 1 \leq j<k\leq n,\ j,k \in A_i\} \cup \{e_{j,n+1}:j \in A_i\} \quad \text{and}\\
			\gamma'_i&=\{e_j:j \in [n]\setminus A_j\} \cup\{e_{i,j}: 1 \leq j<k\leq n,\ j,k \in A_i\} \cup \{f_{j,n+1}:j \in A_i\}
		\end{align*}
		for $1 \leq i \leq n$ the set $X^\gamma=\{x^{\gamma_1},\dots, x^{\gamma_n},x^{\gamma'_1},\dots, x^{\gamma'_n}\}$ is linearly independent. 
		Since $x^{\delta_i}_{e_{j,n+1}}=x^{\delta_i}_{f_{j,n+1}}=0$ for each $1 \leq i \leq n^2$ and $1 \leq j \leq n$, the set $X^\delta \cup X^\gamma$ is affinely independent.
		
		Finally, we set $\delta=\{e_i:1 \leq i\leq k\} \cup \{e_{i,j}: k+1 \leq i <j \leq n+1\}$ and $\mathcal H= \{(n-k-1)x_{e_{n+1}}+\sum_{i \in [n]}x_{e_{i,n+1}}+x_{f_{i,n+1}}=n-k-1\}$.
		Since $X^\delta \cup X^\gamma \subseteq \mathcal H$ and $x^\delta \notin \mathcal H$, the set $X^\delta \cup X^\gamma \cup \{x^\delta\}$ is affinely independent.
		 
		 Since the inequality is supported on $T_n$, the facet is \nice.
	\end{proof}

	Motivated by the NP-hardness of \MinMcut when the input graph is restricted to a tree, we present a polynomial time separation algorithm for generalizations of $(n,k)$-tree inequalities for the multicut dominant in this case.
	We call the inequalities obtained from these facet-defining inequalities by applying \Cref{cor: Replace_edge_by_path} \emph{subdivided $(n,k)$-tree inequalities}. By \Cref{thm: projection} (iv) these yield facet-defining inequalities for each graph containing the according subdivision of $T_n$ with respective sets of terminal pairs.
	As before, we consider our separation algorithm as a proof of concept and are convinced that there are more efficient separation routines utilizing more sophisticated algorithmic approaches whose discussion would be out of scope for this work.
	
	\begin{cor}\label{cor: Spearation_Tree}
		Let $\ell \in \N$ be fixed.
		Given an input graph $G$ that is a tree and a set $S \subseteq \binom{V(G)}{2}$ of terminal pairs, we can enumerate all facet-defining subdivided $(\ell,k)$-tree inequalities for $\Mcut(G,S)$ in polynomial time.
		
		In particular, these inequalities can thus be separated in polynomial time.
	\end{cor}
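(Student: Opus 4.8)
The plan is to follow the proof of \Cref{cor: Separation_Stars} almost verbatim in structure, replacing the star $K_{1,k}$ by the tree $T_\ell$. First I would isolate the combinatorial characterization underlying the enumeration. By \Cref{thm: complete_tree_ineq} the $(\ell,k)$-tree inequality defines a facet of $\Mcut(T_\ell, S_\ell)$ for every $k$ with $2 \le k < \ell$; by \Cref{cor: Replace_edge_by_path} replacing each edge of $T_\ell$ by a path preserves this, so every subdivision $H$ of $T_\ell$ equipped with (the image of) the terminal set $S_\ell$ yields a facet of $\Mcut(H, S_\ell)$; and by \Cref{thm: projection} (iv) the resulting inequality stays facet-defining for $\Mcut(G,S)$ precisely when $H \subseteq G$ and $S \cap \binom{V(H)}{2}$ is the embedded $S_\ell$. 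Reading this correspondence in both directions, enumerating all facet-defining subdivided $(\ell,k)$-tree inequalities of $\Mcut(G,S)$ amounts to enumerating the subgraphs $H \subseteq G$ that are subdivisions of $T_\ell$ inducing exactly the terminal structure $S_\ell$, while $k$ ranges over the constantly many values in $\{2, \dots, \ell-1\}$.

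To list such $H$ in the tree $G=(V,E)$, I would guess the images in $V$ of the $\ell^2+1$ branch- and leaf-vertices of $T_\ell$: the root $\rho$, the inner nodes $v_1, \dots, v_\ell$, and the $\ell(\ell-1)$ leaves $s_{i,j}, t_{i,j}$. For fixed $\ell$ there are only $O(|V|^{\ell^2+1})$ such labelled guesses. Because $G$ is a tree, each edge of $T_\ell$ is realised by the unique path in $G$ joining the images of its endpoints, so the guess already determines a candidate subgraph $H$. I would then verify in polynomial time --- by computing these unique paths and scanning $S$ --- that the realising paths are internally disjoint and meet only in the prescribed shared endpoints (so that $H$ is genuinely a subdivision of $T_\ell$ with the correct local degrees at $\rho$ and at the $v_i$), and that $S \cap \binom{V(H)}{2}$ equals the embedded $S_\ell$ exactly. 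This single equality simultaneously certifies that all required pairs $\{s_{i,j}, t_{i,j}\}$ belong to $S$ and that $H$ induces no additional terminal pair. Admissible guesses, after removing duplicates, are exactly the facet-defining inequalities sought, so no separate facet test is required, and separation reduces to checking a query point against each enumerated inequality.

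Since there are polynomially many guesses and each verification runs in polynomial time, the whole procedure is polynomial for fixed $\ell$. I expect the main difficulty to lie in the verification step rather than in any conceptual point: one has to argue cleanly that internally disjoint realising paths with the right shared endpoints force $H$ to be a subdivision of $T_\ell$, and that an internal node of a realising path which happens to be a terminal of $S$ with partner outside $V(H)$ is harmless --- it neither appears in $S \cap \binom{V(H)}{2}$ nor obstructs the lifting of \Cref{thm: projection} (iv), which absorbs it with coefficient $0$. As with \Cref{cor: Separation_Stars}, this is only a proof of concept, and the crude bound $O(|V|^{\ell^2+1})$ can certainly be sharpened (for instance by guessing only $\rho$ and the $v_i$ and then reading off the leaves from the admissible terminal pairs of $S$).
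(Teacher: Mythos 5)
Your proposal is correct and follows essentially the same route as the paper's proof: guess the images of the root, the internal nodes $v_i$, and the leaves $s_{i,j},t_{i,j}$ (polynomially many choices for fixed $\ell$), exploit uniqueness of paths in the tree $G$ to reconstruct the candidate subdivision, verify disjointness and that the induced terminal pairs are exactly the embedded $S_\ell$, and then test a query point against each enumerated inequality. Your observation that internal path nodes may be terminals with partners outside $V(H)$, and your bound $O(|V|^{\ell^2+1})$ on the number of guesses (which is in fact tighter than the paper's stated $\mathcal{O}(n^{2\ell^2+\ell+1})$), are both consistent with the paper's argument.
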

	\begin{proof}
		We prove that facet-defining subdivided $(\ell,k)$-tree inequalities can be enumerated in polynomial time.
		Checking each enumerated inequality individually yields a simple separation routine.
		
		Let $G=(V,E)$ be a tree with $|V|=n$.
		There are $n \cdot \binom{n-1}{\ell}\cdot \binom{n-1-\ell}{2 \cdot\binom{\ell}{2}}\in \mathcal{O}(n^{2\ell^2 + \ell +1})$ choices for a root $r \in V$, nodes $v_1, \dots , v_\ell \in V \setminus \{r\}$, and nodes $s_{i,j},t_{i,j}\in V \setminus \{r,v_1, \dots, v_\ell\}$, $1 \leq i < j \leq \ell$.
		We check in linear time (in $|V|+|S|$) whether these nodes form the respective nodes of a $T_n$ subdivision in $G$ by searching for the unique $r$-$v_i$-, $v_i$-$s_{i,j}$-, and $v_i$-$t_{i,j}$-paths ($1 \leq i <j \leq \ell$) in $G$ while checking whether all these paths are disjoint and no terminal pair containing nodes which are not in $\{s_{i,j},t_{i,j}:1 \leq i <j \leq \ell\}$ is induced.
		If they do, we can verify whether the leaves induce the necessary terminal pairs in $S$.
		Since $\ell$ is fixed, the number of inequalities corresponding to this tree is constant.
		Hence, we obtain an overall runtime of $\mathcal{O}((|V|+|S|)^{2\ell^2 + \ell +2})$.
	\end{proof}
\section{Cycle Inequalities}\label{sec:cycle-ineq}
	After the investigation of facets of the multicut dominant supported on stars and trees, naturally the question arises whether there are also facet-defining inequalities with $2$-connected support.
	We provide a positive answer to this question by introducing two classes of facet-defining inequalities supported on cycles.
	Throughout this section we consider the cycle $C_n=(\{v_0, \dots , v_{n-1}\}, \{0, \dots , n-1\})$ with $i=v_iv_{(i+1)\bmod n}$ for $0 \leq i <n$.

	First, we consider cycles with each non-edge being a terminal pair:
	\begin{thm}
		Let $n \geq 5$ be odd, consider $C_n$, and let $S=\left\{\{v,w\}:vw \notin E(C_n)\right\}$. Then, the inequality $$\sum_{e \in E(C_n)}x_e \geq \left\lceil \frac{n}{2}\right\rceil $$
		defines a \nice facet of $\Mcut(C_n,S)$.
	\end{thm}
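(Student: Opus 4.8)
The plan is to first determine exactly which edge sets are multicuts for this instance, then to exhibit $n = |E(C_n)|$ affinely independent tight incidence vectors, and finally to read off niceness from the support. The whole argument hinges on a clean combinatorial description: since $S$ consists of \emph{all} non-adjacent pairs, every connected component of $C_n - \delta$ must be a clique of $C_n$. As $C_n$ is triangle-free for $n \geq 5$, these cliques are single vertices or single edges; equivalently, the retained edges $E(C_n) \setminus \delta$ form a matching of $C_n$. A matching of $C_n$ has at most $\lfloor n/2 \rfloor = \tfrac{n-1}{2}$ edges, so every multicut satisfies $|\delta| \geq n - \tfrac{n-1}{2} = \lceil n/2 \rceil$, which gives validity.

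For the tight vertices I would observe that equality holds precisely when the retained edges form a \emph{maximum} matching. For odd $n$ such a matching leaves exactly one vertex $v_j$ unmatched, and fixing $v_j$ determines the matching uniquely, since the complementary path on the remaining $n-1$ vertices has a unique perfect matching. This produces exactly $n$ tight multicuts $\delta_0, \dots, \delta_{n-1}$, where $\delta_j$ is the set of edges at even cyclic offset from $j$, i.e.\ $\delta_j = \{j, j+2, \dots, j+(n-1)\}$ with indices mod $n$. The matrix $M$ whose $j$-th row is $x^{\delta_j}$ is therefore a circulant, and since there are exactly $n$ tight vertices available, I must show that \emph{these} vectors are affinely independent.

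To do so it suffices to prove the $n$ vectors $x^{\delta_0}, \dots, x^{\delta_{n-1}} \in \R^n$ are linearly independent, i.e.\ that $M$ is nonsingular. I would compute its eigenvalues $\lambda_k = \sum_{m=0}^{(n-1)/2} \zeta^{2mk}$, where $\zeta = e^{2\pi \mathrm{i}/n}$. Summing the geometric series and using $\zeta^{kn} = 1$ gives $\lambda_0 = \tfrac{n+1}{2}$ and $\lambda_k = \tfrac{1}{\zeta^{k}+1}$ for $k \neq 0$ (note $\zeta^{2k} = 1$ only for $k = 0$, because $\gcd(2,n) = 1$). Since $n$ is odd, $-1$ is not an $n$-th root of unity, so $\zeta^k \neq -1$ and hence $\lambda_k \neq 0$ for every $k$; thus $M$ is invertible and the incidence vectors are affinely independent. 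This circulant nonsingularity is the crux of the proof and precisely the step where oddness of $n$ is essential, so I would sanity-check the eigenvalue formula directly for $n = 5$ before trusting the general computation.

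Finally, niceness is immediate: every coefficient of the inequality equals $1$, so $\supp(a) = C_n = G$, and \Cref{thm: boundedness} then guarantees that the facet is bounded and therefore \nice.
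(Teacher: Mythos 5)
Your proof is correct, but it follows a genuinely different route from the paper's in both main steps. For validity, the paper double-counts $2$-paths: every multicut must meet each of the $n$ paths of length two, and each edge lies in two of them, so $2|\delta|\geq n$. You instead observe that, since $S$ consists of all non-adjacent pairs, the components of $C_n-\delta$ must be cliques of $C_n$, i.e.\ the retained edges form a matching; this is slightly longer but buys you more, namely an exact characterization of the tight multicuts (complements of the $n$ near-perfect matchings), which the paper never makes explicit. Both proofs then use the same $n$ tight incidence vectors (edges at even cyclic offset from a fixed index), but diverge on the crux: the paper simply recycles the affine-independence argument from the proof of \Cref{thm: circular_star_facet}, which was an induction on $n$ via projection onto fewer coordinates and comparison of coefficients, whereas you prove nonsingularity of the circulant matrix directly through its eigenvalues $\lambda_0=\tfrac{n+1}{2}$ and $\lambda_k=\tfrac{1}{\zeta^k+1}$ for $k\neq 0$, with oddness of $n$ entering exactly as $\zeta^k\neq -1$. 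Your computation is sound (the geometric series collapses because $\zeta^{k(n+1)}=\zeta^k$, and $\gcd(2,n)=1$ ensures $\zeta^{2k}\neq 1$ for $k\neq 0$), self-contained, and in fact mirrors the circulant-determinant technique the paper itself deploys later in \Cref{thm: WagnerIneq}; the paper's cross-reference is shorter but makes this theorem dependent on the star-inequality proof. Your final step, concluding that the facet is \nice because $\supp(a)=C_n=G$ together with \Cref{thm: boundedness}, is exactly the paper's argument.
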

	\begin{proof}
		To prove validity of the inequality let $\delta$ be a multicut in $C_n$. Then, $\delta$  intersects each $2$-path of $C_n$. Since there are $n$ $2$-path and each edge is contained in two such paths, we obtain
		$2 \cdot |\delta| \geq n $ and thus, $|\delta| \geq \left\lceil \frac{n}{2} \right\rceil$.
		
		To show that the inequality is indeed facet-defining, consider the multicuts
		$\delta_i=\{(i+2k)\bmod n: 1 \leq k <\frac{n}{2}\}$ for $1 \leq i \leq n$. Clearly, we have $\sum_{e \in \delta_i}x_e=\left\lceil \frac{n}{2} \right\rceil$. The affine independence of these incidence vectors was shown in the proof of \Cref{thm: circular_star_facet}.
		
		Since the inequality is minimally supported on $C_n$, the facet is \nice.
	\end{proof}
	
	\begin{cor}
		Let $n \geq 5$, consider $C_n$, and let $S=\left\{\{v,w\}:vw \notin E(C_n)\right\} $. Then, $\Mcut(C_n,S)$ is completely described by the inequalities
		\begin{alignat*}{2}
			x_e & \geq 0 \hspace{1cm}&&\text{for all } e \in E(C_n),\\
			x_{uv}+x_{vw} &\geq 1 &&\text{for all } uv,vw \in E(C_n),\\
			\sum_{e \in E(C_n)}x_e &\geq \left\lceil \frac{n}{2}\right\rceil ,&& 
		\end{alignat*}
		where the last inequality can be omitted if and only if $n$ is odd.
	\end{cor}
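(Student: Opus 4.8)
The plan is to avoid redoing the total-unimodularity computation of \Cref{thm: complete_description_circular-claw} and instead observe that the polyhedron in question is \emph{literally the same} as the circular star dominant treated there. The whole argument then reduces to a clean identification of the feasible integer points, after which both the dominant and its defining inequalities are inherited.

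First I would characterize the multicuts of $C_n$. Recall that edge $i=v_iv_{(i+1)\bmod n}$, so vertex $v_i$ is incident to the two cyclically consecutive edges $(i-1)\bmod n$ and $i$. I claim that $\delta \subseteq E(C_n)$ is an $S$-multicut if and only if for every $i$ at least one of the edges $(i-1)\bmod n,\,i$ lies in $\delta$. For the converse direction, if each vertex has an incident deleted edge, then no arc of $C_n-\delta$ can contain an internal vertex, so every maximal arc consists of at most one edge; hence every connected component of $C_n-\delta$ is a single vertex or a single edge, and any two non-adjacent vertices lie in different components, i.e.\ $\delta$ is an $S$-multicut. For the forward direction, if some vertex $v_i$ had both incident edges outside $\delta$, then $v_{i-1},v_i,v_{i+1}$ would lie on a common arc of $C_n-\delta$; since $n\geq 5$, the indices $i-1$ and $i+1$ differ by $2\not\equiv\pm 1\pmod n$, so $\{v_{i-1},v_{i+1}\}$ is a non-edge and hence a terminal pair in $S$ that is not separated, a contradiction.

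Next I observe that this is exactly the multicut condition for $K_{1,n}$ with the circular terminal set $S'=\{\{v_i,v_{(i+1)\bmod n}\}:0\leq i<n\}$ of \Cref{thm: complete_description_circular-claw}: there the unique $v_i$-$v_{(i+1)\bmod n}$-path runs through the center using the edges $e_i$ and $e_{(i+1)\bmod n}$, so a multicut is precisely a set deleting at least one of every cyclically consecutive pair of edges. Under the edge identification $i\leftrightarrow e_i$ the two families of multicut incidence vectors therefore coincide, giving $\PMcut(C_n,S)=\PMcut(K_{1,n},S')$ and hence $\Mcut(C_n,S)=\Mcut(K_{1,n},S')$. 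Since the inequalities listed in the present corollary (nonnegativity, the consecutive $2$-path inequalities $x_{uv}+x_{vw}\geq 1$, and the cardinality inequality $\sum_e x_e\geq\lceil n/2\rceil$, omissible exactly when $n$ is odd) are precisely those of \Cref{thm: complete_description_circular-claw} transported along this identification, the statement follows immediately from that corollary, including the parity clause.

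The only real work lies in the multicut characterization of the first step; everything else is inherited from the already-established star case without further computation. I expect the arc/component analysis to be the sole point requiring care, in particular the remark that $n\geq 5$ (indeed $n\geq 4$) is exactly what guarantees that a three-vertex arc yields a genuine non-edge terminal pair; the cardinality bound $\lceil n/2\rceil$ and the ``omit iff odd'' clause need no separate treatment, as they are carried over verbatim from \Cref{thm: complete_description_circular-claw}.
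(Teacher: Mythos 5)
Your proposal is correct and takes essentially the same route as the paper: the paper's proof is the one-line observation that the polyhedra for $(C_n,S)$ and for the circular star $(K_{1,n},S')$ coincide, so the proof of \Cref{thm: complete_description_circular-claw} can be reused verbatim. Your argument simply makes explicit the multicut characterization (every cyclically consecutive pair of edges must meet $\delta$) that justifies this coincidence, which the paper leaves implicit.
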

	\begin{proof}
		We can reuse the proof of \Cref{thm: complete_description_circular-claw} since the polyhedra coincide despite arising from different instances.
	\end{proof}
	
	Finally, we consider another, less dense set of terminal pairs over a cycle.
	There, the graph and terminals form a Moebius ladder instead of a complete graph, cf. \Cref{fig: WagnerIneq}.
	
	\begin{thm}\label{thm: WagnerIneq}
		Let $n \geq 5$ be odd, consider $C_{2n}$, and let $S=\{\{v_i,v_{i+n}\}: 1 \leq i \leq n\}$.
		Then, for $\beta \in \{1,2\}$ and $\beta'=3-\beta$, the inequalities
		$$\sum_{i=0}^{n-1}(\beta x_{2i-1}+\beta' x_{2i}) \geq 3 $$
		define \nice facets of $\Mcut(C_{2n},S)$.
	\end{thm}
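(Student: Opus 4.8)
The plan is to prove both inequalities ($\beta=1$ and $\beta=2$) at once by exploiting symmetry. Rotating the cycle by one vertex, $v_i\mapsto v_{i+1}$, is an automorphism of $C_{2n}$ that preserves $S$ (it sends the antipodal pair $\{v_i,v_{i+n}\}$ to $\{v_{i+1},v_{i+1+n}\}$) while shifting every edge index by one. Since $n$ is odd this flips the parity of edge indices and hence interchanges the two inequalities, so it suffices to treat $\beta=1$, $\beta'=2$. I will call the odd-indexed edges \emph{light} (coefficient $1$) and the even-indexed edges \emph{heavy} (coefficient $2$), and write $a^\T x\geq 3$ for the inequality, where $a_e\in\{1,2\}$ accordingly; note $\supp(a)=C_{2n}$.

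For validity I would first characterize the minimal multicuts through the arc structure of the cycle. Removing $\delta\subseteq E(C_{2n})$ separates the pair $\{v_i,v_{i+n}\}$ if and only if $\delta$ meets both $v_i$-$v_{i+n}$-arcs, i.e. no semicircle of $n$ consecutive edges contains all of $\delta$; equivalently all gaps between cyclically consecutive edges of $\delta$ are $<n$. From this one reads off that no single edge is a multicut, that a two-edge set is a multicut exactly when its edges are antipodal (both gaps equal $n-1$), and that any other multicut has at least three edges. Because $n$ is odd, antipodal edges $j$ and $j+n$ have opposite parity, so every two-edge multicut consists of one light and one heavy edge and has weight exactly $3$; every multicut with $\geq 3$ edges has weight $\geq 3$ since each edge contributes at least $1$. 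Hence $a^\T x\geq 3$ is valid, and the \emph{tight} multicuts are precisely (a) the $n$ antipodal edge pairs $\{j,j+n\}$ for $0\leq j<n$, and (b) the triples of three light edges all of whose gaps are $<n$.

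For the facet property, note that once facet-definingness is established the facet is automatically \nice by \Cref{thm: boundedness}, since $\supp(a)=C_{2n}$. It therefore remains to show that the tight multicuts affinely span the hyperplane $\{a^\T x=3\}$, i.e. that any $(c,d)$ with $c^\T x^\delta=d$ for all tight $\delta$ is a scalar multiple of $(a,3)$. The type-(a) cuts give $c_j+c_{j+n}=d$ for all $j$, and the type-(b) cuts give $c_p+c_q+c_r=d$ for every valid light triple $\{p,q,r\}$; comparing two valid triples sharing two light edges forces the coefficients of the remaining two light edges to coincide. The crucial combinatorial step — and the step I expect to be the main obstacle — is to verify that the resulting \qq{shared-pair} relation connects all $n$ light edges, so that they receive a common coefficient $\lambda$. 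In light-index coordinates a triple is valid exactly when its three cyclic gaps are each at most $(n-1)/2$, and one checks directly, as in the case $n=5$, that balanced such triples overlap enough to connect all light edges precisely because $n$ is odd and $n\geq 5$.

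Granting this connectivity, all light coefficients equal $\lambda$, whence $d=3\lambda$ from any type-(b) relation. Substituting $c_j=\lambda$ for light $j$ into $c_j+c_{j+n}=d$ gives $c_{j+n}=2\lambda$; as $j$ ranges over the light indices, $j+n$ ranges over all heavy indices (again using that $n$ is odd), so every heavy coefficient equals $2\lambda$. Thus $(c,d)=\lambda(a,3)$, the tight multicuts affinely span $\{a^\T x=3\}$, and the inequality defines a facet, which is \nice as observed. The case $\beta=2$ follows by the rotation above, completing the proof.
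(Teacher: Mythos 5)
Your proposal is correct in outline but establishes the facet property by a genuinely different method than the paper. The paper argues primally: it exhibits $2n$ tight multicuts --- the $n$ antipodal pairs $\{i,i+n\}$ together with $n$ cyclic shifts of one particular light triple --- and shows their incidence vectors are affinely independent, reducing this to the nonvanishing of a circulant determinant, which it evaluates by a product formula over $n$-th roots of unity. You instead argue dually: every affine equation $c^\T x = d$ satisfied by all tight multicuts must be a scalar multiple of $(a,3)$, and you get to use \emph{all} tight triples rather than $n$ carefully chosen ones. Your validity argument (the gap characterization of multicuts of a cycle with antipodal terminal pairs) is the same as what the paper asserts, just spelled out in more detail, and your reduction to $\beta=1$ by rotation and your appeal to $\supp(a)=C_{2n}$ and \Cref{thm: boundedness} for the facet being \nice match the paper. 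The trade-off: the paper's route requires a determinant computation imported from the literature, while yours avoids linear algebra entirely but stands or falls with a combinatorial connectivity claim.

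That claim is the one genuine gap: you flag it yourself as the main obstacle and then only assert it (``one checks directly, as in the case $n=5$''), which is not a proof for general odd $n$. Fortunately it closes in a few lines. Let $m=(n-1)/2\geq 2$ and work in light-index coordinates modulo $n$, where, as you correctly note, a triple is valid exactly when all three cyclic gaps are at most $m$. The pair $\{0,m\}$ extends to a valid triple $\{0,m,r\}$ precisely for $r\in\{m+1,\dots,2m\}$: the three gaps are $m$, $r-m$, and $n-r$, and the latter two are at most $m$ iff $m+1\leq r\leq 2m$. Hence all of $c_{m+1},\dots,c_{2m}$ equal $d-c_0-c_m$, i.e.\ the window of $m$ consecutive light indices $\{m+1,\dots,2m\}$ carries a constant coefficient. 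Since the family of valid triples is invariant under rotation of light indices, \emph{every} window of $m$ cyclically consecutive light indices is constant; consecutive windows overlap in $m-1\geq 1$ indices (this is exactly where $n\geq 5$ enters), so chaining the windows around the cycle makes all $n$ light coefficients coincide. With this inserted, the remainder of your argument --- $d=3\lambda$ from any triple, $c_{j+n}=2\lambda$ from the antipodal relations, and the observation that $j\mapsto j+n$ maps light indices onto heavy ones because $n$ is odd --- goes through, and your proof is complete.
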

	\begin{proof}
		We prove that $\sum_{i=1}^{n}(x_{2i-1}+2 x_{2i}) \geq 3$ defines a shared facet of $\Mcut(C_{2n},S)$. Then, the claim follows for $\beta=2$ by symmetry.
		
		Validity follows from the fact tat the only feasible multicuts with less than three edges are $\delta=\{i,i+n\}$ for $0 \leq i <n$.
		
		It remains to prove that the inequality is indeed facet-defining. To this end consider the multicuts
		\begin{alignat*}{2}
			\delta_i &=	\{i,i+n\} &&\text{for } 0 \leq 1 <n,\\
			\gamma_i &= \left\{(2(i+\ell)) \bmod n: \ell\in\left\{0,1,\left\lceil \frac n 2 \right\rceil \right\} \right\} 
			\quad &&\text{for } 0 \leq 1 <n.
		\end{alignat*}
		Clearly, the incidence vectors of each such multicuts satisfies the inequality under consideration with equality.
		Hence, it remains to prove that these multicuts are affinely independent.
		This is trivial for $x^{\delta_1}, \dots , x^{\delta_n}$.
		Moreover, since $x^{\gamma_i}_j=0$ for each $i$ and each odd $j$ it suffices to prove that $x^{\gamma_1}, \dots, x^{\gamma_n}$ are affinely independent.
		To this end we consider the matrix $A=(a_{i,j})_{0 \leq i,j \leq n}$ with entries $a_{i,j}=x^{\gamma_j}_{2i}$ and show that $A$ has full rank.
	
		Assume $A$ has rank less than $n$. Since $A$ is a circulant matrix, \cite[Theorem~9]{CirculantMatrix_Determinant} together with $\det A=\det A^\mathsf{T}$ yields
	 	$$0= \det A= \prod_{j=0}^{n-1} \left(1+ \zeta^{(n-2)j}+\zeta^{\frac{n-1}{2}j}\right)$$
		where $\zeta=\exp (\frac{2 \mathbf{i} \pi}{n})$.
		Then, there exists some $0 \leq j \leq n$ with 
		$1+ \zeta^{(n-2)j}+\zeta^{\frac{n-1}{2}j}=0$.
		Now, we have $\{\zeta^{(n-2)j}, \zeta^{\frac{n-1}{2}j} \}=\{\exp(\frac{2 \mathbf{i} \pi}{3}), \exp(\frac{4\mathbf{i} \pi}{3}) \}$.
		We have $\zeta^{(n-2)j}=(\zeta^{\frac{n-1}{2}j})^2 = \zeta^{(n-1)j}$ and thus, $j \bmod n =0$ contradicting $1+ \zeta^{(n-2)j}+\zeta^{\frac{n-1}{2}j}=0$.
		
		Since the inequality minimally is supported on $C_{2n}$, the defined facet is \nice.
	\end{proof}
	
	\begin{figure}
		\centering
		\begin{tikzpicture}
			\node[circle,fill, scale=0.5] (1) at	(360/10 * 1 : 1.8cm) {};
			\node[circle,fill, scale=0.5] (2) at	(360/10 * 2 : 1.8cm) {};
			\node[circle,fill, scale=0.5] (3) at	(360/10 * 3 : 1.8cm) {};
			\node[circle,fill, scale=0.5] (4) at	(360/10 * 4 : 1.8cm) {};
			\node[circle,fill, scale=0.5] (5) at	(360/10 * 5 : 1.8cm) {};
			\node[circle,fill, scale=0.5] (6) at	(360/10 * 6 : 1.8cm) {};
			\node[circle,fill, scale=0.5] (7) at	(360/10 * 7 : 1.8cm) {};
			\node[circle,fill, scale=0.5] (8) at	(360/10 * 8 : 1.8cm) {};
			\node[circle,fill, scale=0.5] (9) at	(360/10 * 9 : 1.8cm) {};
			\node[circle,fill, scale=0.5] (10) at	(360/10 * 10 : 1.8cm) {};
			\draw[red,line width=.7mm] (1)--(2);
			\draw[red,line width=.7mm] (3)--(4);
			\draw[red,line width=.7mm] (5)--(6);
			\draw[red,line width=.7mm] (7)--(8);
			\draw[red,line width=.7mm] (9)--(10);
			
			\draw(2)--node[above]{\footnotesize{$\ell=\left\lceil\frac n 2 \right\rceil$}}(3);
			\draw (4)--(5);
			\draw (6)--node[below left]{\footnotesize{$\ell=1$}}(7);
			\draw (8) --node[below right]{\footnotesize{$\ell=0$}}(9);
			\draw (10)--(1);
			\draw[dashed,blue!50] (1)--(6);
			\draw[dashed,blue!50] (2)--(7);
			\draw[dashed,blue!50] (3)--(8);
			\draw[dashed,blue!50] (4)--(9);
			\draw[dashed,blue!50] (5)--(10);
			
			\node () at (0,-2.5) {};
			
			\useasboundingbox (-2,-2.5) rectangle (2cm,2.5cm);
		\end{tikzpicture}
		\caption{The graph from \Cref{thm: WagnerIneq} for $n=5$. Dashed blue connections visualize the set $S$, black edges are those with coefficient $1$, and thick, red edges are those with coefficient $2$. We label the edges of a potential $\gamma_i$-cut}
		\label{fig: WagnerIneq}
	\end{figure}
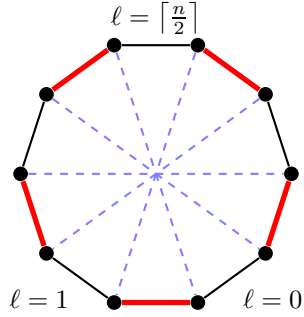
	
	\Cref{thm: WagnerIneq} does not have a natural variant for even $n$.
	However, we can generalize its inequalities by node splittings in the following way:
	\begin{thm}\label{thm: generalized_WagnerIneq}
		Let $n \geq 5$ be odd, $N\geq n$, and $0 < \ell_1 < \dots\ell_{n-1} < \ell_n = N$.
		Consider the cycle $C_{2N}$ and $S_{2N}=\{\{v_i,v_{i+N}\}: 1 \leq i \leq N\}$.
		Then, for $\beta \in \{1,2\}$ and $\beta'=3-\beta$,
		$$\sum_{i=0}^{\ell_1-1}(\beta x_i+ \beta' x_{i+N})+ \sum_{i=\ell_1}^{\ell_2-1}(\beta' x_i+ \beta x_{i+N})+ \dots + \sum_{\mathclap{i=\ell_{n-1}}}^{\ell_n-1}(\beta x_i+ \beta' x_{i+N}) \geq 3 $$
		define \nice facets of $\Mcut(C_{2N},S_{2N})$.
	\end{thm}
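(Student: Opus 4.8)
The plan is to obtain the generalized inequalities from the base case \Cref{thm: WagnerIneq} by a sequence of node splittings, relying on the ``in particular'' clause of \Cref{thm: NodeSplitting} to carry along the \nice property. Writing the block lengths as $\ell_1-\ell_0,\dots,\ell_n-\ell_{n-1}$ with $\ell_0=0$, I would induct on $N$: the case $N=n$, where every block has length one and the coefficients alternate strictly around the cycle, is exactly \Cref{thm: WagnerIneq}. Each inductive step will lengthen one chosen block by one, passing from $C_{2N}$ to $C_{2(N+1)}$ while preserving both the antipodal terminal set $S_{2N}$ and the block-wise coefficient pattern. Since every length vector with entries $\ge 1$ summing to $N$ is reachable from the all-ones vector by such increments, this produces all admissible $\ell_i$, and the base facet being \nice propagates to the result.

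To lengthen a fixed block I would use two consecutive node splits. Let $u$ be a vertex incident to that block on the side meant to carry coefficient $1$ (recall $\{\beta,\beta'\}=\{1,2\}$), and let $w$ be its antipode. First split $u$ into $u_1,u_2$, sending one incident cycle edge to each, and use the $\wS$-rule to keep \emph{both} pairs $\{u_1,w\}$ and $\{u_2,w\}$; this yields the odd cycle $C_{2N+1}$ with one temporarily doubled terminal pair. Then split $w$ into $w_1,w_2$ and resolve the doubling via $\{u_1,w\}\mapsto\{u_1,w_1\}$ and $\{u_2,w\}\mapsto\{u_2,w_2\}$, which restores an even cycle $C_{2(N+1)}$ and introduces exactly one new antipodal pair; doing this across all blocks rebuilds $S_{2N}$ precisely. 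By \Cref{thm: NodeSplitting} each new cycle edge receives coefficient $b-\omega=3-\omega$, where $\omega$ is the weight of a minimum multicut of the cycle opened at the split vertex, so the whole step reduces to computing these two values of $\omega$.

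The main obstacle is exactly this min-cut computation, and it is what dictates the split order. For the first split the two pairs $\{u_1,w\},\{u_2,w\}$ force $w$ to be separated from \emph{both} ends of the opened path; as these ends lie in the two arcs flanking $w$, any feasible multicut must delete at least two edges, so $\omega\ge 2$, and I would exhibit a feasible cut using two coefficient-$1$ edges placed symmetrically about $w$ so that the three resulting segments are antipodal windows, giving $\omega=2$ and new-edge coefficient $1$. For the second split the pairing is no longer doubled, and I expect a single coefficient-$1$ edge at the antipode of the opened vertex, namely the edge $u_1u_2$ just created, to separate all pairs, giving $\omega=1$ and new-edge coefficient $2$. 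Thus one step inserts a weight-$1$ edge on the chosen side and a weight-$2$ edge on the antipodal side, matching the block, and choosing which of $u,w$ to split first selects which side receives the $1$. The delicate points I expect to spend the most effort on are verifying that such cheap feasible cuts exist for arbitrary block lengths so that $\omega$ attains its lower bound, confirming no cheaper multicut exists, and checking that the antipodal relabelling (including the orientation of the $w_i$ relative to the $u_i$) stays consistent so that the accumulated terminal set is exactly $S_{2N}$ and the coefficient vector is globally the claimed one.
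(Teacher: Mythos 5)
Your proposal follows the paper's proof almost exactly: the same induction anchored at \Cref{thm: WagnerIneq}, the same pair of node splits per step via \Cref{thm: NodeSplitting} (first split keeping \emph{both} doubled pairs $\{u_1,w\},\{u_2,w\}$ on the intermediate odd cycle, then the antipodal split resolving the doubling), the same values $\omega=2$ and $\omega=1$ producing the new coefficients $1$ and $2$, the same witness for $\omega=1$ (the edge created by the first split), and niceness carried along because the final inequality is supported on the whole cycle. The one substantive deviation is your proposed witness for $\omega=2$: two coefficient-$1$ edges placed \emph{symmetrically} about $w$ do not work in general. A truly antipodal pair of edges always has total weight $3$ (each block contributes coefficients $\beta$ and $\beta'=3-\beta$ to opposite arcs), and for merely symmetric pairs there need not exist any distance $d$ at which both edges have coefficient $1$ \emph{and} the cut is feasible --- e.g.\ when the block containing $w$ on the backward side is longer than half the cycle, the nearest backward coefficient-$1$ edge lies outside the feasibility window. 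The paper's witness is asymmetric: it takes the coefficient-$1$ edge immediately incident to $w$ together with a coefficient-$1$ edge at the first block boundary in the opposite arc (the set $\{0,\ell_1+N\}$ in the paper's labelling); one checks that the three resulting path segments then separate every antipodal pair. Since you explicitly flagged this verification as the delicate point, this is a local repair rather than a flaw in the approach, but as literally stated that step of your plan would fail.
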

	\begin{proof}
		We prove by induction on $N-n$ that the inequality
		is indeed facet-defining for $\beta =1$. Since the inequality is minimally supported on $C_{2n}$, the defined facet is then \nice.
		The claim follows symmetrically for $\beta=2$.
		
		For $N=n$, the claim is given by \Cref{thm: WagnerIneq}. Now, let $N>n$. 
		By induction and symmetry we may assume that
		$$\sum_{i=0}^{\ell_1-1}(x_i+2x_{i+N})+ \sum_{i=\ell_1}^{\ell_2-1}(2x_i+x_{i+N})+ \dots + \sum_{\mathclap{i=\ell_{n-1}}}^{\ell_n-1}(x_i+2x_{i+N}) \geq 3$$
		is facet-defining for $\Mcut(C_{2(N-1)},S_{2(N-1)})$.
		We now construct the claimed facet-defining inequality by using two node splits utilizing \Cref{thm: NodeSplitting}, cf. \Cref{fig: generalized_wagner}.
		First we obtain the graph $G$ from $C_{2(N-1)}$ by splitting $v_{N-1}$ into $v_{N-1}$ and $v_{N-1}'$ such that $v_{N-1}$ is adjacent to $v_N$ and $v_{N-1}'$ is adjacent to $v_{N-2}$, and set $S=S_{2(N-1)}\setminus\{ \{v_0,v_{N-1}\}\} \cup \{\{v_0v_{N-1}\},\{v_0,v_{N-1}'\}\}$.
		A minimum $S$-multicut in $G-v_{N-1}v_{N-1}'$ with respect to the coefficient from the inequality under consideration has value $2$ and is witnessed by $\{0,\ell_1+N\}$ as the former edge separates all terminal pairs but $\{v_0,v_{N-1}\}$, which is separated by the latter edge.
		Hence, \Cref{thm: NodeSplitting} yields that
			\begin{equation}\label{eq: splitted_Wagner-proof}
				1\cdot x_{v_{N-1}v_{N-1}'}+\sum_{i=0}^{\ell_1-1}(x_i+2x_{i+N})+ \sum_{i=\ell_1}^{\ell_2-1}(2x_i+x_{i+N})+ \dots + \sum_{\mathclap{i=\ell_{n-1}}}^{\ell_n-1}(x_i+2x_{i+N}) \geq 3
			\end{equation}
			is facet-defining for $\Mcut(G,S)$.
			
			Now, we obtain $C_{2N}$ from $G$ by splitting $v_0$ into $v_0$ and $v_0'$ such that $v_0$ is adjacent to $v_1$ and $v_0'$ is adjacent to $v_{2N-3}$.
			Within this split we obtain $S_{2N}=S_{2(N-1)}\setminus\{ \{v_0,v_{N-1}\}\} \cup \{\{v_0v_{N-1}\},\{v_0',v_{N-1}'\}\}$.
			Since a minimum $S_{2N}$-multicut in $C_{2N}-v_0v_0'$ with respect to the coefficient from \eqref{eq: splitted_Wagner-proof} is given by $\{v_{N-1}v_{N-1}'\}$ and has value $1$, \Cref{thm: NodeSplitting} yields that
			$$2x_{v_0v_0'}+x_{v_{N-1} v'_{N-1}} +\sum_{i=0}^{\ell_1-1}(x_i+2x_{i+N})+ \sum_{i=\ell_1}^{\ell_2-1}(2x_i+x_{i+N})+ \dots + \sum_{\mathclap{i=\ell_{n-1}}}^{\ell_n-1}(x_i+2x_{i+N}) \geq 3$$
			is facet-defining for $\Mcut(C_{2N},S_{2N})$. After renaming, this is the claimed facet-defining inequality.
	\end{proof}
	\begin{figure}
		\centering
		\begin{subfigure}{.3\textwidth}
			\centering
			\begin{tikzpicture}
				\node[circle,fill, scale=0.5,label={[xshift=3mm,yshift=0mm]\footnotesize $v_{1}$}] (1) at (360/8 * 1 : 1.8cm) {};
				\node[circle,fill, scale=0.5,label={[above,xshift=0,yshift=0mm]\footnotesize $v_{0}$}] (2) at (360/8 * 2 : 1.8cm) {};
				\node[circle,fill, scale=0.5,label={[xshift=-3mm,yshift=0mm]\footnotesize $v_{2(N-1)-1}$}] (3) at (360/8 * 3 : 1.8cm) {};
				\node[circle,fill, scale=0.5] (4) at (360/8 * 4 : 1.8cm) {};
				\node[circle,fill, scale=0.5,label={[below,xshift=-3mm,yshift=-1mm]\footnotesize $v_{N}$}] (5) at (360/8 * 5 : 1.8cm) {};
				\node[circle,fill, scale=0.5,label={[below,xshift=0,yshift=-1mm]\footnotesize $v_{N-1}\phantom{'}$}] (6) at (360/8 * 6 : 1.8cm) {};
				\node[circle,fill, scale=0.5,label={[below,xshift=5mm,yshift=-1mm]\footnotesize $v_{N-2}$}] (7) at (360/8 * 7 : 1.8cm) {};
				\node[circle,fill, scale=0.5] (8) at (360/8 * 8 : 1.8cm) {};
				\draw (1)--(2);
				\draw (2)--(3);
				\draw (3)--(4);
				\draw (4)--(5);
				\draw (5)--(6);
				\draw (6)--(7);
				\draw (7)--(8);
				\draw (8)--(1);
				\draw[dashed,blue!50] (1)--(5);
				\draw[dashed,blue!50] (2)--(6);
				\draw[dashed,blue!50] (3)--(7);
				\draw[dashed,blue!50] (4)--(8);
				
				\node () at (0,-2.5) {};
				
				\useasboundingbox (-2,-2.5) rectangle (2cm,2.5cm);
			\end{tikzpicture}
			\caption{$C_{2(N-1)}$}
		\end{subfigure}
		\hfill%
		\begin{subfigure}{.3\textwidth}
			\centering
			\begin{tikzpicture}
				\node[circle,fill, scale=0.5,label={[xshift=3mm,yshift=0mm]\footnotesize $v_{1}$}] (1) at (360/8 * 1 : 1.8cm) {};
				\node[circle,fill, scale=0.5,label={[above,xshift=0,yshift=0mm]\footnotesize $v_{0}$}] (2) at (360/8 * 2 : 1.8cm) {};
				\node[circle,fill, scale=0.5,label={[xshift=-3mm,yshift=0mm]\footnotesize $v_{2N-3}$}] (3) at (360/8 * 3 : 1.8cm) {};
				\node[circle,fill, scale=0.5] (4) at (360/8 * 4 : 1.8cm) {};
				\node[circle,fill, scale=0.5,label={[below,xshift=-3mm,yshift=-1mm]\footnotesize $v_{N}$}] (5) at (360/8 * 5 : 1.8cm) {};
				
				\node[circle,fill, scale=0.5,label={[below,xshift=0,yshift=-1mm]\footnotesize $v_{N-1}\phantom{'}$}] (6a) at (360/24 * 17 : 1.8cm) {};
				\node[circle,fill, scale=0.5,label={[below,xshift=0,yshift=-1mm]\footnotesize $v_{N-1}'$}] (6) at (360/24 * 19 : 1.8cm) {};

				\node[circle,fill, scale=0.5,label={[below,xshift=5mm,yshift=-1mm]\footnotesize $v_{N-2}$}] (7) at (360/8 * 7 : 1.8cm) {};
				\node[circle,fill, scale=0.5] (8) at (360/8 * 8 : 1.8cm) {};
				\draw (1)--node[above]{\footnotesize$0$}(2);
				\draw (2)--(3);
				\draw (3)--(4);
				\draw (4)--(5);
				\draw (5)--(6a);
				\draw[red] (6a) --(6);
				\draw (6)--(7);
				\draw (7)--(8);
				\draw (8)--(1);
				
				\draw[dashed,blue!50] (1)--(5);
				\draw[dashed,blue!50] (2)--(6);
				\draw[dashed,blue!50] (2)--(6a);
				\draw[dashed,blue!50] (3)--(7);
				\draw[dashed,blue!50] (4)--(8);
				
				\useasboundingbox (-2,-2.5) rectangle (2cm,2.5cm);
			\end{tikzpicture}
			\caption{$G$}
		\end{subfigure}
		\hfill%
		\begin{subfigure}{.3\textwidth}
			\centering
			\begin{tikzpicture}
				\node[circle,fill, scale=0.5,label={[xshift=3mm,yshift=0mm]\footnotesize $v_{1}$}] (1) at (360/8 * 1 : 1.8cm) {};
				
				\node[circle,fill, scale=0.5,label={[above,xshift=0,yshift=0mm]\footnotesize $v_{0}$}] (2a) at (360/24 * 5 : 1.8cm) {};
				\node[circle,fill, scale=0.5,label={[above,xshift=0,yshift=0mm]\footnotesize $v_{0}'$}] (2) at (360/24 * 7 : 1.8cm) {};
				
				\node[circle,fill, scale=0.5,label={[xshift=-3mm,yshift=0mm]\footnotesize $v_{2N-3}$}] (3) at (360/8 * 3 : 1.8cm) {};
				\node[circle,fill, scale=0.5] (4) at (360/8 * 4 : 1.8cm) {};
				\node[circle,fill, scale=0.5,label={[below,xshift=-3mm,yshift=-1mm]\footnotesize $v_{N}$}] (5) at (360/8 * 5 : 1.8cm) {};
				
				\node[circle,fill, scale=0.5,label={[below,xshift=0,yshift=-1mm]\footnotesize $v_{N-1}\phantom{'}$}] (6a) at (360/24 * 17 : 1.8cm) {};
				\node[circle,fill, scale=0.5,label={[below,xshift=0,yshift=-1mm]\footnotesize $v_{N-1}'$}] (6) at (360/24 * 19 : 1.8cm) {};

				\node[circle,fill, scale=0.5,label={[below,xshift=5mm,yshift=-1mm]\footnotesize $v_{N-2}$}] (7) at (360/8 * 7 : 1.8cm) {};
				\node[circle,fill, scale=0.5] (8) at (360/8 * 8 : 1.8cm) {};
				\draw (1)--(2a);
				\draw[red] (2a)--(2);
				\draw (2)--(3);
				\draw (3)--(4);
				\draw (4)--(5);
				\draw (5)--(6a);
				\draw[red] (6a) --(6);
				\draw (6)--(7);
				\draw (7)--(8);
				\draw (8)--(1);
				
				\draw[dashed,blue!50] (1)--(5);
				\draw[dashed,blue!50] (2)--(6);
				\draw[dashed,blue!50] (2a)--(6a);
				\draw[dashed,blue!50] (3)--(7);
				\draw[dashed,blue!50] (4)--(8);
				
				\useasboundingbox (-2,-2.5) rectangle (2cm,2.5cm);
			\end{tikzpicture}
			\caption{$C_{2N}$}
		\end{subfigure}
		\caption{Visualization of the node splitting in the proof of \Cref{thm: generalized_WagnerIneq}. Dashed blue connections indicate terminal pairs. Edges obtained by node splitting are colored red.
		}
		\label{fig: generalized_wagner}
	\end{figure}
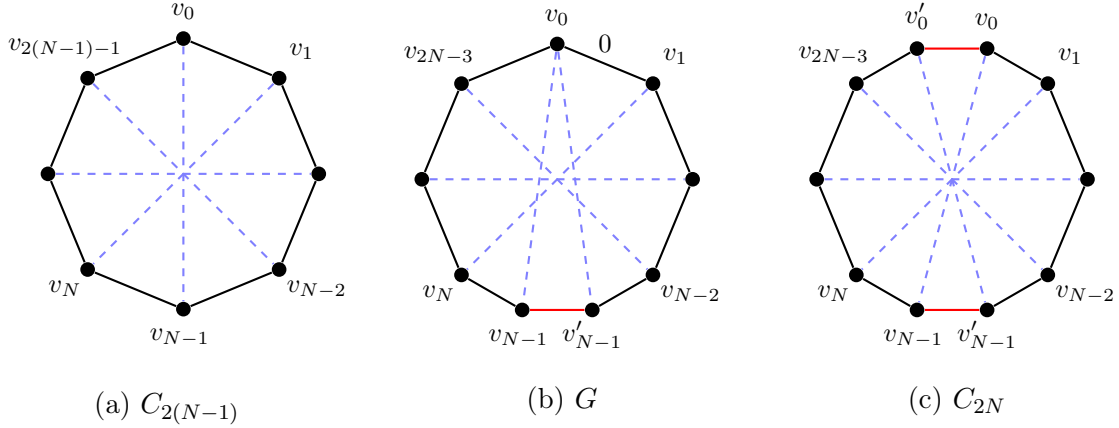
	
	Given the inequalities from Theorems \ref{thm: WagnerIneq} and \ref{thm: generalized_WagnerIneq}, the question arises naturally whether these, together with edge- and path inequalities, suffice to describe the multicut dominant of $C_{2n}$.
	We were able to verify this computationally using normaliz~\cite{normaliz} for $n\leq 7$ and end with the following conjecture:
	
	\begin{conj}\label{conj: DiagonalCycles}
		Let $n \geq 3$, consider $C_{2n}$, and $S= \{\{v_i,v_{i+n}\}: 1 \leq i \leq n\}$. Then, $\Mcut(C_{2n},S)$ is completely defined by the edge inequalities, the path inequalities, and the inequalities from Theorems~\ref{thm: WagnerIneq} and \ref{thm: generalized_WagnerIneq}.
	\end{conj}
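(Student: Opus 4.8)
The inclusion $\Mcut(C_{2n},S)\subseteq\mathcal P$ is immediate, since every listed inequality is valid (indeed facet-defining) for the dominant, so the whole content is the reverse inclusion. The plan is to prove it by classifying facets: I would show that, up to positive scaling, every facet-defining inequality $a^\T x\ge b$ of $\Mcut(C_{2n},S)$ is an edge inequality, a path inequality, or one of the inequalities of \Cref{thm: WagnerIneq} and \Cref{thm: generalized_WagnerIneq}. Throughout one may use $a\ge\bnull$ from the opening observation, together with \Cref{thm: boundedness}, which splits the facets into the \emph{unbounded} ones (with $\supp(a)\subsetneq C_{2n}$) and the \emph{bounded} ones (with $\supp(a)=C_{2n}$, hence $a_e>0$ for all $e$).

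For the unbounded facets I would argue that $\supp(a)$ is a proper subgraph of the cycle, hence a vertex-disjoint union of arcs. By \Cref{thm: projection}(iii) the inequality, restricted to its support, is facet-defining for the dominant of that forest of paths. Since a multicut on a disjoint union is the independent union of multicuts on the components (terminal pairs split across two arcs are separated for free), this dominant is the product of the arc-dominants, and a facet of a product is supported on a single arc. On one arc the terminal pairs induced by $S$ are exactly the antipodal pairs with both endpoints on the arc; each such pair is a sub-interval spanning exactly $n$ edges, and a multicut must stab every such interval with a chosen edge. The edges contained in a fixed interval are consecutive, so the interval-versus-edge incidence matrix has the consecutive-ones property and is totally unimodular; hence the arc-dominant is described by edge and interval inequalities only, with the minimal tight intervals being precisely the $n$-edge arcs. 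Lifting back via \Cref{thm: projection}(iv) identifies the facet with one of the listed edge or path inequalities (\Cref{cor:edge_and_path-ineq}).

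The heart of the proof is the bounded case $\supp(a)=C_{2n}$. Here I would set $b=\min_\delta a^\T x^\delta$ and use that the facet carries $2n$ affinely independent incidence vectors of minimal multicuts. The multicuts of this instance are exactly the edge sets whose removal leaves every arc with at most $n$ vertices; the smallest are the antipodal pairs $\{i,i+n\}$, and validity forces $a_i+a_{i+n}\ge b$ for every $i$. The target is to show that a bounded facet must in fact satisfy $a_i+a_{i+n}=b$ for \emph{every} $i$ (equivalently $\sum_{e}a_e=nb$), that $a$ takes only two values summing to $b$, and that the switches between these two values occur in the block pattern of \Cref{thm: generalized_WagnerIneq}; after scaling to $b=3$ and values $1,2$ this is exactly a (generalized) Wagner inequality. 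The two-value structure and the block pattern would then be read off from the size-three tight multicuts that such a facet must carry, traced back through the node-split construction of \Cref{thm: generalized_WagnerIneq} to an odd core of length between $5$ and $n$.

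The main obstacle is precisely the first step of the bounded case: showing that no bounded facet has $\sum_e a_e>nb$, i.e. that it is impossible for some antipodal pair to fail to be tight. This is the genuinely combinatorial part that resists the totally-unimodular machinery used for the odd circular-star and cycle descriptions; the coefficient-$2$ Wagner facets are exactly the obstruction to integrality of any consecutive-ones relaxation, which is what keeps the statement a conjecture beyond the normaliz verification for $n\le 7$. A complementary route worth pursuing is to bypass facet classification and instead prove directly that $\mathcal P$ is integral, by showing that for every cost $c\ge\bnull$ the program $\min\{c^\T x:x\in\mathcal P\}$ attains its optimum at an incidence vector of a minimal multicut; the difficulty migrates to the same place, namely exhibiting, for costs that make a coefficient-$2$ pattern binding, an integral optimum certified by a combination of path and Wagner inequalities. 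Small and even $n$ (where $C_{2n}$ admits no Wagner facet, e.g. $n\le 4$) should be handled separately and reduce to the edge/path description.
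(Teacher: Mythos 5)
You should first note that this statement is a \emph{conjecture} in the paper: the authors give no proof at all, only a computational verification with normaliz for $n\leq 7$. So there is no paper argument to measure your proposal against; the only question is whether your proposal actually closes the conjecture, and it does not. What you have written is a proof \emph{plan}, and you candidly concede the decisive step. Your treatment of the unbounded facets is plausible in outline: if $\supp(a)\subsetneq C_{2n}$, the support is a disjoint union of arcs, \Cref{thm: projection}(iii) restricts the facet to that forest, the dominant factors over components, and on a single arc the antipodal terminal pairs give an interval (consecutive-ones) constraint matrix, so total unimodularity yields the edge/path description, which lifts back via \Cref{thm: projection}(iv) and \Cref{cor:edge_and_path-ineq}. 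That reduction is believable, though it is the easy half.

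The genuine gap is exactly where you place it: the bounded case $\supp(a)=C_{2n}$. Everything you say there is a statement of the desired conclusion, not an argument --- you assert that a bounded facet ``must'' have every antipodal pair $\{i,i+n\}$ tight (equivalently $\sum_e a_e = nb$), that the coefficients ``must'' take only two values summing to $b$, and that the sign pattern ``must'' be the block pattern of \Cref{thm: generalized_WagnerIneq}, but no mechanism is offered for deriving any of these from the $2n$ affinely independent tight minimal multicuts that a facet carries. Your alternative route (proving integrality of $\mathcal P$ directly by exhibiting integral optima for every $c\geq\bnull$) relocates, as you note, the same difficulty. Since the entire content of the conjecture beyond the computationally verified range $n\leq 7$ lives in this step, the proposal does not constitute a proof; it is a reasonable research program whose core combinatorial lemma remains open, just as it does for the authors.
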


\section{Conclusion and Open Problems}
	We introduced the multicut dominant and investigated its basic properties. 
	Moreover, we studied the effect of graph operations on the multicut dominant and its facets.
	We presented facet-defining inequalities supported on stars, trees, and cycles.
	
	Apart from Conjectures \ref{conj: 2component} and \ref{conj: DiagonalCycles}, there are open research questions regarding constraint separation algorithms that we have not tackled in this paper: Are there more efficient separation routines for subdivided star- and tree-equalities on trees than those given in Corollaries \ref{cor: Separation_Stars} and \ref{cor: Spearation_Tree}? Can one give a polynomial separation method for some class of the presented facet-defining inequalities of the multicut dominant of arbitrary graphs?
	
	It is known that \MinMcut is solvable in polynomial time when restricted to $|S| \leq 2$. For $|S|=1$, this is mirrored on the polyhedral side by the complete description of the $s$-$t$-cut dominant.
	All our new facet-defining inequalities require instances with at least $3$ terminal pairs.
	This gives rise to the following conjecture:
	\begin{conj}
		Let $G=(V,E)$ and $S$ be a set of terminal pairs with $|S|=2$. Then $\Mcut(G,S)$ is completely described by the following inequalities:
		\begin{alignat*}{2}
			x_e					&\geq 0,	\qquad 	&& \text{for all } e \in E,\\
			\sum_{e \in (P)} x_e	&\geq 1,			&& \text{for each $s$-$t$-path $P$ with $\{s,t\} \in S. $} 
		\end{alignat*}
	\end{conj}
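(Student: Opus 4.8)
The plan is to reduce the statement to a single integrality assertion about the relaxation $\mathcal P$ defined by the displayed inequalities, and then to attack that assertion via two-commodity flows. First I would record that the recession cone of $\mathcal P$ is exactly $\R^E_{\geq 0}$: every recession direction $r$ satisfies $r\geq \bnull$ and $\sum_{e\in E(P)}r_e\geq 0$, hence $r\geq\bnull$, and this is also the recession cone of $\Mcut(G,S)$. Since $\Mcut(G,S)\subseteq\mathcal P$ by \Cref{prop:facets_s-t-cut}, and since the last lemma of \Cref{sec:basic_properties} gives $\mathcal P\cap\Z^E=\Mcut(G,S)\cap\Z^E$, it suffices to prove that $\mathcal P$ is \emph{integral}, i.e.\ that all of its (finitely many, as $\mathcal P$ is pointed) vertices are integral. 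Indeed, integrality would then yield
$$\mathcal P=\conv(\mathcal P\cap\Z^E)+\R^E_{\geq 0}=\conv(\Mcut(G,S)\cap\Z^E)+\R^E_{\geq 0}\subseteq\Mcut(G,S),$$
and the reverse inclusion is already known. Thus the entire difficulty is concentrated in the integrality of $\mathcal P$ for $|S|=2$.

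To prove integrality it suffices to show that $\min\{c^\T x:x\in\mathcal P\}$ is attained at an integral point for every $c\in\R^E_{\geq 0}$. By linear programming duality this optimum equals the maximum value of a fractional two-commodity flow for the pairs $\{s_1,t_1\}$ and $\{s_2,t_2\}$ under capacities $c$. The plan is to exploit that for two commodities this primal–dual pair is especially well behaved --- the same structure that makes \MinMcut polynomial for $|S|=2$ \cite{2-Terminals} --- certifying the min--max equality by the two-commodity max-flow--min-cut theorem. The remaining, essential task is then to upgrade a fractional optimal multicut to an integral one of the same cost.

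For this upgrade I would argue at the level of a vertex $x^*$ of $\mathcal P$, reading $x^*$ as a length function on $E$. Feasibility says every $s_i$-$t_i$-path has length at least $1$, and at a vertex each edge of $\supp(x^*)$ must lie on an $s_i$-$t_i$-path of length exactly $1$ for $i=1$ or $i=2$; otherwise all paths through $e$ would be strictly longer than $1$, so $x^*_e$ could be both increased and decreased within $\mathcal P$, contradicting extremality (in the spirit of \Cref{lem:nonzero_coeff_support}). Any such length-$1$ path is automatically a shortest path for its commodity. For a single commodity the sublevel balls of the shortest-path metric around $s_i$ cut out an integral $s_i$-$t_i$-cut --- precisely the blocking / max-flow--min-cut phenomenon behind \Cref{prop:facets_s-t-cut}. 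I would then combine the two commodities by an uncrossing argument on their two shortest-path metrics, selecting thresholds that produce balls $B_1\ni s_1$ and $B_2\ni s_2$ whose boundary edge sets union to a multicut of cost $c^\T x^*$.

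The hard part --- and the reason this remains a conjecture --- is controlling the interaction of the two commodities. For $|S|\geq 3$ fractional vertices genuinely occur, as the $K_{1,3}$ example of the introduction shows, and the generic multicut LP is only half-integral, so any proof must use the special two-commodity structure in an essential way. Concretely, when the tight shortest paths of the two commodities share edges, the two metrics cannot be thresholded independently, and one must show that the coupled system nonetheless forces integral coordinates rather than half-integral ones. Equivalently, the technical crux is to establish that the union of the two $s_i$-$t_i$-path clutters retains the \emph{integral} max-flow--min-cut property for two commodities, never incurring a factor-$2$ gap. Making the simultaneous uncrossing of the two shortest-path metrics robust to shared edges is where I expect essentially all of the work to lie.
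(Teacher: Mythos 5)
First, essential context: the paper does \emph{not} prove this statement --- it is the closing open conjecture of the paper, stated without any supporting argument --- so there is no proof of the authors' to compare yours against; your proposal must stand on its own. Its first half does: the recession-cone observation, the use of the integer-point lemma at the end of \Cref{sec:basic_properties}, and the reduction of the conjecture to integrality of the pointed rational polyhedron $\mathcal P$ (equivalently, to the claim that for every $c \in \R^E_{\geq 0}$ the LP $\min\{c^\T x : x \in \mathcal P\}$ has an integral optimum, whose value by LP duality is the maximum total value of a fractional two-commodity flow) are all correct. The genuine gap is in what follows, and you misdiagnose it. The step you declare open --- \qq{upgrade a fractional optimal multicut to an integral one of the same cost} --- is precisely the content of Hu's two-commodity max-flow min-cut theorem (1963), which you name but then treat as if it only certified a fractional min--max relation. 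Hu's theorem says that the maximum \emph{fractional} two-commodity flow value equals the minimum capacity of an honest, integral cut $\delta(W)$ with $|W \cap \{s_i,t_i\}| = 1$ for $i=1,2$. Every such cut is a multicut, and conversely every multicut $\delta$ contains such a cut: choose $W$ as a suitable union of connected components of $G-\delta$ (a short case analysis on which components contain the four terminals, which need not be distinct, settles this). Hence for every $c \geq 0$ the LP optimum over $\mathcal P$ equals the minimum weight of a multicut, which is attained at an integral point of $\mathcal P$; vertex integrality then follows since each vertex of $\mathcal P$ is the unique minimizer of some $c \geq 0$. This closes the proof --- no shortest-path balls and no uncrossing are needed, which is fortunate, because your sketch of that step (thresholding two interacting shortest-path metrics) is not a proof as written.

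Your final paragraph also conflates two distinct properties, and this conflation is exactly what makes the remaining work look harder than it is. What the conjecture needs is idealness of the clutter of $s_i$-$t_i$-paths: \emph{fractional} flow equals \emph{integral} cut. What genuinely fails for two commodities is the integer packing (MFMC) property: the maximum \emph{integral} flow can be strictly smaller than the minimum cut, and only half-integral optimal flows are guaranteed (Hu; Rothschild--Whinston). The \qq{factor-2 gap} you worry about lives entirely on the flow side of the duality and is irrelevant to integrality of the cut-side polyhedron. So, correctly assembled, your approach does not merely make partial progress: it settles the conjecture affirmatively, and shows it to be a consequence of classical two-commodity flow theory rather than an open problem.
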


\bibliographystyle{alpha}
{\footnotesize \bibliography{bibliography}}
\end{document}